\theoremstyle{definition}
\newtheorem{definition}{Definition}[section]
\newtheorem{remark}[definition]{Remark}
\newtheorem{lemma}[definition]{Lemma}
\newtheorem{proposition}[definition]{Proposition}
\newtheorem{corollary}[definition]{Corollary}
\newtheorem{theorem}[definition]{Theorem}
\newcommand{\SL}{\operatorname{\mathbf{SL}}}
\newcommand{\GL}{\operatorname{\mathbf{GL}}}
\newcommand{\Gal}{\operatorname{Gal}}
\newcommand{\Vol}{\operatorname{Vol}}
\newcommand{\Lie}{\operatorname{Lie}}
\newcommand{\Ad}{\operatorname{Ad}}
\newcommand{\diag}{\operatorname{diag}}
\newcommand{\diam}{\operatorname{diam}}
\newcommand{\rank}{\operatorname{rank}}
\newcommand{\height}{\operatorname{ht}}
\begin{document}
\title[A shrinking target problem in homogeneous spaces]{A shrinking target problem in homogeneous spaces of semisimple algebraic groups}
\author{Cheng Zheng}
\address{School of Mathematical Sciences, Shanghai Jiao Tong University, China}
\email{zheng.c@sjtu.edu.cn}
\subjclass[2020]{Primary:  37A17; Secondary: 11J83}
\keywords{shrinking target problem, Dani correspondence, structures of irreducible representations of semisimple algebraic groups, reduction theory of arithmetic subgroups, mixing of a semisimple flow, asymptotic estimates of the number of rational points, Diophantine approximation on flag varieties}
\thanks{The author acknowledges the support by the Institute of Modern Analysis-A Frontier Research Centre of Shanghai.}
\maketitle

\begin{abstract}
In this paper, we study a shrinking target problem with target at infinity in a homogeneous space of a semisimple algebraic group from the representation-theoretic point of view. Let $\rho:\mathbf G\to\GL(V)$ be an irreducible $\mathbb Q$-rational representation of a connected semisimple $\mathbb Q$-algebraic group $\mathbf G$ on a complex vector space $V$, $\{a_t\}_{t\in\mathbb R}$ a one-parameter subgroup in a $\mathbb Q$-split torus in $\mathbf G$ and $\psi:\mathbb R_+\to\mathbb R_+$ a positive function on $\mathbb R_+$. We define a subset $S_\rho(\psi)$ of $\psi$-Diophantine elements in $\mathbf G(\mathbb R)$ in terms of the representation $\rho$ and $\{a_t\}_{t\in\mathbb R}$, and prove formulas for the Hausdorff dimension of the complement of $S_{\rho}(\psi)$. We also discuss the connections of our results to Diophantine approximation on flag varieties and rational approximation to linear subspaces in Grassmann varieties.
\end{abstract}

\section{Introduction and main results}\label{intro}
\subsection{Introduction and main problem}
In this paper, we study a shrinking target problem, the prototype of which was first proposed by Hill and Velani \cite{HV95}. Let $f:X\to X$ (or $f_t:X\to X$) be a map (or a flow) on a metric space $X$ with a measure $\mu$. Generally speaking, in the shrinking target problem, one studies the set $S$ of points in $X$ whose orbits under $f$ (or $f_t$) hit a shrinking target infinitely often, and seeks to establish results about the size (the $\mu$-measure or the Hausdorff dimension) of $S$. The question about the measure of $S$, especially when $f_t$ is a flow on a homogeneous space, is closely related to Khintchine-type theorems in the metric theory of Diophantine approximation \cite{K24,K26, G38}, and has been studied extensively in the past few decades (e.g. \cite{AM09,GK17,KO21,KY19,KM99,KZ18,S82,T08,KW19,BDGW24}). In particular, the shrinking target problem of this type for a diagonalizable homogeneous flow $f_t$ has been addressed in full generality by Kleinbock and Margulis \cite{KM99} where a null-conull law on $\mu(S)$ and a logarithm law are established. On the other hand, the study of the Hausdorff dimension of $S$ for a homogeneous flow $f_t$ (when $\mu(S)=0$) is a finer problem, and it is usually related to the Jarn\'ik-Besicovitch theorem in Diophantine approximation \cite{B34,J31}. As the analysis of the Hausdorff dimension of $S$ is more delicate and requires more information about the geometric structure of the space $X$ and the dynamics of the homogeneous flow $f_t$, only a few cases were known (which we will discuss below in this subsection). In this paper, we focus on this Hausdorff dimension version of shrinking target problem for homogeneous flows. (For discussions about other maps or flows, one may refer to e.g. \cite{FMSU15,HV97,LWWX14,HR11,SW13,U02,LLVZ23}.)

In \cite{Z19, FZ22}, we consider a shrinking target problem with target at infinity for the homogeneous flow $a_t:G/\Gamma\to G/\Gamma$ where $G/\Gamma$ is a finite-volume quotient of a rank-one simple Lie group $G$ \cite{Z19} or $G/\Gamma=\SL_3(\mathbb R)/\SL_3(\mathbb Z)$ \cite{FZ22}, and $\{a_t\}_{t\in\mathbb R}$ is a one-parameter diagonalizable subgroup in $G$. In particular, we obtain a formula for the Hausdorff dimension of the set $S$ as defined above, and establish a Jarn\'ik-Besicovitch theorem on Diophantine approximation in Heisenberg groups \cite{Z19}. One may also refer to \cite{HP,HP01,HP02,MP93} for related discussions.

This paper is a continuation of the works \cite{Z19, FZ22}, and we aim to generalize the main results in \cite{Z19, FZ22} for diagonalizable homogeneous flows on finite-volume quotients of semisimple algebraic groups. We will see that our generalization has close connections to Diophantine approximation on flag varieties~\cite{L65, d21} and rational approximation to linear subspaces in Grassmann varieties~\cite{Sch67,d25}, and it can imply certain Jarn\'ik-Besicovitch type theorems about well approximable subsets. Currently there are references \cite{BD86,D92,DFSU,S21} which may be related to the topic of this paper with different emphases. In \cite{BD86, D92}, the main results are Jarn\'ik-Besicovitch theorems about Diophantine matrices, but with the help of a generalized Dani correspondence developed in \cite{D85,KM99}, these results can be reformulated as a shrinking target problem for the homogeneous flow $h_t:X_{m+n}\to X_{m+n}$ where $X_{m+n}=\SL_{m+n}(\mathbb R)/\SL_{m+n}(\mathbb Z)$ and $$h_t=\diag(\underbrace{e^{t/m},\dots,e^{t/m}}_{m\textup{ times}}, \underbrace{e^{-t/n},\dots,e^{-t/n}}_{n\textup{ times}})\quad(t\in\mathbb R).$$ In \cite{DFSU}, a variational principle is established to analyze $h_t$-orbits with various behaviors in $X_{m+n}$, which makes it possible to study the Hausdorff dimensions of a variety of Diophantine subsets in the space of $m\times n$ matrices. This variational principle is generalized in \cite{S21} for any diagonalizable flow on $X_{m+n}$ with respect to a nonstandard Hausdorff dimension. We remark that the results in \cite{BD86,D92,DFSU,S21} mainly deal with diagonalizable flows on $X_{m+n}$.

Now we propose the main problem of this paper. Let $\mathbf G$ be a semisimple algebraic group defined over $\mathbb Q$ and $\rho:\mathbf G\to\GL(V)$ a finite-dimensional irreducible representation of $\mathbf G$ over $\mathbb Q$ on a complex vector space $V$ with a $\mathbb Q$-structure. The vector space $V$ may be identified with $\mathbb C^d$ ($d=\dim_{\mathbb C} V$) equipped with a norm $\|\cdot\|$ so that $\mathbb Z^d\subset\mathbb C^d$ is compatible with the $\mathbb Q$-structure in $V$. For any discrete subgroup $\Lambda$ in $V$, define the first minimum of $\Lambda$ by $$\delta(\Lambda)=\inf_{v\in\Lambda\setminus\{0\}}\|v\|.$$ Let $\{a_t\}_{t\in\mathbb R}$ be a one-parameter Ad-diagonalizable subgroup in $\mathbf G(\mathbb R)$, $\mathbb R_+$ the set of positive real numbers and $\psi:\mathbb R_+\to\mathbb R_+$ a positive function on $\mathbb R_+$. Then we aim to estimate the Hausdorff dimension of the complement of the set $$\{g\in\mathbf G(\mathbb R): \delta(\rho(a_t\cdot g)\cdot\mathbb Z^d)\geq C\cdot\psi(t)\;(\forall t>0)\textup{ for some }C>0\}\subset\mathbf G(\mathbb R)$$ with respect to a standard Riemannian metric on $\mathbf G(\mathbb R)$.

By Mahler's compactness criterion, the main problem above is clearly  a shrinking target problem with target at infinity for the homogeneous flow $a_t:\mathbf G(\mathbb R)/\Gamma\to\mathbf G(\mathbb R)/\Gamma$, where $\Gamma$ is an arithmetic lattice in $\mathbf G(\mathbb R)$ preserving $\mathbb Z^d$. Note that the results in \cite{BD86, D92} can be translated in the setting of this problem if we let $\mathbf G=\SL_{m+n}$, $\rho$ the standard representation of $\SL_{m+n}$ on $\mathbb C^{m+n}$ and $\{a_t\}_{t\in\mathbb R}=\{h_t\}_{t\in\mathbb R}$. We will see later in our theorems that the main problem also includes the cases in \cite{Z19, FZ22}.

\subsection{Main results}\label{results}
In this paper, we address the main problem above under the following assumption. We assume that $\mathbf G$ is a connected semisimple algebraic group over $\mathbb Q$ (with respect to the Zariski topology) and $\mathbf T$ is a maximal $\mathbb Q$-split torus in $\mathbf G$. Let $\{a_t\}_{t\in\mathbb R}$ be a one-parameter subgroup in $\mathbf T(\mathbb R)$, $\psi:\mathbb R_+\to\mathbb R_+$ a positive function on $\mathbb R_+$ and $\rho$ a finite-dimensional irreducible representation of $\mathbf G$ defined over $\mathbb Q$ with $\dim\ker\rho=0$.

\begin{definition}\label{def11}
An element $g\in\mathbf G(\mathbb R)$ is called $\psi$-Diophantine if there exists a constant $C>0$ such that $$\delta(\rho(a_t\cdot g)\mathbb Z^d)\geq C\cdot\psi(t)\textup{ for any }t>0$$ where $\delta$ is the first minimum function. We denote by $S_\rho(\psi)$ the set of all $\psi$-Diophantine elements, and $S_\rho(\psi)^c$ its complement in $\mathbf G(\mathbb R)$.
\end{definition}
\begin{remark}\label{r12}
In the rest of the paper, we assume that $\mathbf G$ is $\mathbb Q$-isotropic since otherwise $\mathbf T=\{e\}$. Note that for any $g\in\mathbf G(\mathbb R)$, $\rho(a_t\cdot g)\cdot\mathbb Z^d$ is a unimodular lattice in the vector space of real points in $V$, and $\delta(\rho(a_t\cdot g)\cdot\mathbb Z^d)$ is bounded above by a constant depending only on $\rho$. Due to this fact, we may also assume that the function $\psi$ is bounded since otherwise $S_\rho(\psi)=\emptyset$ by Definition~\ref{def11}. We denote by $\dim_H(S)$ the Hausdorff dimension of a subset $S$ in a smooth manifold $\mathcal M$ with respect to a Riemannian metric on $\mathcal M$.
\end{remark}

To state the first two theorems about $\dim_HS_\rho(\psi)^c$, we need to introduce some notation. We choose a minimal parabolic $\mathbb Q$-subgroup $\mathbf P_0$ in $\mathbf G$ containing $\mathbf T$ with the Levi subgroup $Z(\mathbf T)$ (the centralizer of $\mathbf T$ in $\mathbf G$). Then $\mathbf P_0$ and $\mathbf T$ define a root system ($\Phi,\;\Phi^+,\;\Delta$) where $\Phi$ is the set of $\mathbb Q$-roots relative to $\mathbf T$, $\Phi^+$ is the set of positive $\mathbb Q$-roots determined by $\mathbf P_0$ and $\Delta$ is the set of simple $\mathbb Q$-roots in $\Phi^+$. Let $\overline{\mathbf P}_0$ be the opposite minimal parabolic $\mathbb Q$-subgroup of $\mathbf P_0$ defined by $\Phi\setminus\Phi^+$ with the same Levi subgroup $Z(\mathbf T)$. Without loss of generality, we may assume that the stable horospherical subgroup of $\{a_t\}_{t\in\mathbb R}$ is contained in the unipotent radical $R_u(\mathbf P_0)$ of $\mathbf P_0$ and the unstable horospherical subgroup of $\{a_t\}_{t\in\mathbb R}$ is contained in the unipotent radical $R_u(\overline{\mathbf P}_0)$ of $\overline{\mathbf P}_0$. One can write the space $V$ in the representation $\rho$ as a direct sum of weight spaces with respect to the action of $\mathbf T$ $$V=\bigoplus_{\beta}V_\beta.$$ By the structure theory of irreducible representations of complex semisimple groups and semisimple Lie algebras, there is a highest weight $\beta_0$ among the weights $\beta$'s (where the order is determined by $\Phi^+$), and we denote by $V_{\beta_0}$ its corresponding weight space. (See \S\ref{pre} for more details). The stabilizer of the weight space $V_{\beta_0}$ in $\mathbf G$ is a parabolic $\mathbb Q$-subgroup $\mathbf P_{\beta_0}$ containing $\mathbf P_0$, and its unipotent radical is denoted by $R_u(\mathbf P_{\beta_0})$. It is known that there exists a $\mathbb Q$-torus $\mathbf T_{\beta_0}$ in $\mathbf T$ such that $Z(\mathbf T_{\beta_0})$ (the centralizer of $\mathbf T_{\beta_0}$ in $\mathbf G$) is a Levi subgroup of $\mathbf P_{\beta_0}$. We denote by $\overline{\mathbf P}_{\beta_0}$ the opposite parabolic $\mathbb Q$-subgroup of $\mathbf P_{\beta_0}$ containing $\overline{\mathbf P}_0$ with the same Levi subgroup $Z(\mathbf T_{\beta_0})$. The unipotent radical of $\overline{\mathbf P}_{\beta_0}$ is denoted by $R_u(\overline{\mathbf P}_{\beta_0})$.

In the following, if an algebraic $\mathbb Q$-subgroup $\mathbf F\subset\mathbf G$ is normalized by $\mathbf T$, then we write $\Phi(\mathbf F)$ for the set of $\mathbb Q$-roots in $\mathbf F$ relative to $\mathbf T$, while the symbol $\sum_{\alpha\in\Phi(\mathbf F)}$ (or $\prod_{\alpha\in\Phi(\mathbf F)}$) stands for the sum (or product) over all the $\mathbb Q$-roots $\alpha\in\Phi(\mathbf F)$ counted with multiplicities (i.e., the dimensions of the corresponding $\mathbb Q$-root spaces associated to $\alpha\in\Phi(\mathbf F)$ in the Lie algebra of $\mathbf F$). For a $\mathbb Q$-root $\lambda$ in $\mathbf G$ or a $\mathbb Q$-weight $\lambda$ in $\rho$ relative to $\mathbf T$ (which is a $\mathbb Q$-character of $\mathbf T$), we will often consider it as a linear functional on the Lie algebra $\mathfrak a$ of $\mathbf T(\mathbb R)$ and use the same symbol. In particular, we will write $\lambda(a_t)$ ($t\in\mathbb R$) for the values of $\lambda$ (as a linear functional) on the Lie algebra of $\{a_t\}_{t\in\mathbb R}$ (so that $\lambda(a_t)$ is linear on the parameter $t\in\mathbb R$). We denote by $\nu_{0}$ any $\mathbb Q$-root relative to $\mathbf T$ in $R_u(\overline{\mathbf P}_{0})$ satisfying the condition that $$\nu_{0}(a_1)=\max\{\alpha(a_1):\alpha\in\Phi(R_u(\overline{\mathbf P}_{0}))\}.$$ Since the unstable horospherical subgroup of $\{a_t\}_{t\in\mathbb R}$ is contained in $R_u(\overline{\mathbf P}_0)$, the value $\nu_0(a_1)>0$.

Now we can state our first two theorems. Here for any $\psi:\mathbb R_+\to\mathbb R_+$, we define $$\tau(\psi):=\liminf_{t\to\infty}\left(-\frac{\ln(\psi(t))}t\right)$$ which will be an important quantity in the formula of the Hausdorff dimension of $S_\rho(\psi)^c$. We prove that the effective range of $\tau(\psi)$ is $0\leq\tau(\psi)\leq\beta_0(a_{-1})$.
\begin{theorem}\label{range}
We have $\tau(\psi)\geq0$ and $\beta_0(a_{-1})>0$. If $\tau(\psi)>\beta_0(a_{-1})$, then $S_\rho(\psi)^c=\emptyset$.
\end{theorem}

\begin{theorem}\label{mthm11}
Let $\mathbf G$ be a connected semisimple algebraic group defined over $\mathbb Q$, $\mathbf T$ a maximal $\mathbb Q$-split torus in $\mathbf G$ and $\{a_t\}_{t\in\mathbb R}$ a one-parameter subgroup in $\mathbf T(\mathbb R)$. Let $\rho$ be a finite-dimensional irreducible representation of $\mathbf G$ defined over $\mathbb Q$ on a complex vector space $V$ with $\dim\ker\rho=0$, and $\psi:\mathbb R_+\to\mathbb R_+$ a function on $\mathbb R_+$. Then 
\begin{align*}
\dim_H S_\rho(\psi)^c\geq&\dim\mathbf G-\frac{\tau(\psi)}{\beta_0(a_{-1})\nu_{0}(a_1)}\cdot\sum_{\alpha\in\Phi(R_u(\overline{\mathbf P}_{\beta_0}))}\alpha(a_1)
\end{align*}
for any $0\leq\tau(\psi)<\beta_0(a_{-1})$. In the case $\tau(\psi)=\beta_0(a_{-1})$ we have
\begin{enumerate}
\item If $\psi(t)\cdot e^{\beta_0(a_{-1})t}$ is bounded, then $S_\rho(\psi)^c=\emptyset$.
\item If $\psi(t)\cdot e^{\beta_0(a_{-1})t}$ is unbounded, then $S_\rho(\psi)^c\neq\emptyset$ and $$\dim_HS_\rho(\psi)^c\geq\dim\mathbf G-\frac{\tau(\psi)}{\beta_0(a_{-1})\nu_{0}(a_1)}\cdot\sum_{\alpha\in\Phi(R_u(\overline{\mathbf P}_{\beta_0}))}\alpha(a_1).$$
\end{enumerate}
\end{theorem}

To state the next theorem, we need to introduce another notation. Let $N(\mathbf T)$ and $Z(\mathbf T)$ be the normalizer and centralizer of $\mathbf T$ in $\mathbf G$ respectively. Then the Weyl group relative to $\mathbb Q$ is defined by $$_{\mathbb Q}{W}=N(\mathbf T)/Z(\mathbf T).$$ Let $\mathbf P_0$ be the minimal parabolic $\mathbb Q$-subgroup of $\mathbf G$ defined as in Theorem~\ref{mthm11} such that the stable horospherical subgroup of $\{a_t\}_{t\in\mathbb R}$ is contained in the unipotent radical $R_u(\mathbf P_0)$ of $\mathbf P_0$ and the unstable horospherical subgroup of $\{a_t\}_{t\in\mathbb R}$ is contained in the unipotent radical $R_u(\overline{\mathbf P}_0)$ of $\overline{\mathbf P}_0$. It is known that the Bruhat decomposition of $\mathbf G$ is the following \cite[Theorem 21.15]{B91} $$\mathbf G(\mathbb Q)=\mathbf P_0(\mathbb Q)\cdot {_{\mathbb Q}{W}}\cdot\mathbf P_0(\mathbb Q).$$  Let ${_{\mathbb Q}\mathcal W}$ be a set of representatives of $_{\mathbb Q}{W}$ in $N(\mathbf T)(\mathbb Q)$. We may assume that $e\in{_{\mathbb Q}\mathcal W}$. Then one can deduce from the Bruhat decomposition that (see \S\ref{pre}) $$\mathbf G(\mathbb Q)=\mathbf P_0(\mathbb Q)\cdot{_{\mathbb Q}{\mathcal W}}\cdot\overline{\mathbf P}_0(\mathbb Q),\quad\mathbf G(\mathbb Q)=\mathbf P_0(\mathbb Q)\cdot {_{\mathbb Q}{\mathcal W}}\cdot R_u(\overline{\mathbf P}_0)(\mathbb Q).$$ For each $w\in{_{\mathbb Q}\mathcal W}$, define $$\mathbf F_{w}=R_u(\overline{\mathbf P}_0)\cap w^{-1}R_u(\overline{\mathbf P}_{\beta_0})w,\quad\mathbf H_{w}=R_u(\overline{\mathbf P}_0)\cap w^{-1}\mathbf P_{\beta_0}w$$ and by \cite[Proposition 21.9]{B91}, we have $$R_u(\overline{\mathbf P}_0)=\mathbf H_{w}\cdot\mathbf F_{w}.$$ It follows that 
\begin{align*}
\mathbf G(\mathbb Q)=&\mathbf P_{\beta_0}(\mathbb Q)\cdot {_{\mathbb Q}{\mathcal W}}\cdot R_u(\overline{\mathbf P}_0)(\mathbb Q)\\
=&\bigcup_{w\in{_{\mathbb Q}\mathcal W}} \mathbf P_{\beta_0}(\mathbb Q)\cdot(w\mathbf H_{w}w^{-1})(\mathbb Q)\cdot w\cdot\mathbf F_{w}(\mathbb Q)\\
=&\bigcup_{w\in{_{\mathbb Q}\mathcal W}}\mathbf P_{\beta_0}(\mathbb Q)\cdot w\cdot\mathbf F_{w}(\mathbb Q).
\end{align*}
Note that the subsets $\mathbf P_{\beta_0}(\mathbb Q)\cdot w\cdot\mathbf F_{w}(\mathbb Q)$ $(w\in{_{\mathbb Q}\mathcal W})$ above may overlap, and for our purpose, we choose a subset ${_{\mathbb Q}\overline{\mathcal W}}$ of ${_{\mathbb Q}\mathcal W}$ (as small as possible) such that $$\mathbf G(\mathbb Q)=\bigcup_{w\in{_{\mathbb Q}\overline{\mathcal W}}}\mathbf P_{\beta_0}(\mathbb Q)\cdot w\cdot\mathbf F_{w}(\mathbb Q).$$ In the statement of Theorem~\ref{mthm12} below, we fix any such subset ${_{\mathbb Q}\overline{\mathcal W}}$ of ${_{\mathbb Q}\mathcal W}$.

Let $\bigwedge^{\dim V_{\beta_0}}V$ be the $\dim V_{\beta_0}$-exterior product space of $V$ over $\mathbb C$, and denote by $\rho_{\beta_0}$ the natural extension of $\rho$ on $\bigwedge^{\dim V_{\beta_0}}V$. Let $$\{e_1,e_2,\dots, e_{\dim V_{\beta_0}}\}\subset\mathbb Z^d$$ be an integral basis in $V_{\beta_0}$ which spans $V_{\beta_0}\cap\mathbb Z^d$. We write $$e_{V_{\beta_0}}:=e_1\wedge e_2\wedge\cdots\wedge e_{\dim V_{\beta_0}}\in\bigwedge^{\dim V_{\beta_0}}V.$$ For each $w\in{_{\mathbb Q}\overline{\mathcal W}}$, define the following morphism $$\Psi_{w}:\mathbf F_{w}(\mathbb R)\to\bigwedge^{\dim V_{\beta_0}} V,\quad\Psi_{w}(x)=\rho_{\beta_0}(wxw^{-1})\cdot e_{V_{\beta_0}}.$$ Note that $w\mathbf F_{w}w^{-1}\subset R_u(\overline{\mathbf P}_{\beta_0})$ and the stabilizer of the weight space $V_{\beta_0}$ is $\mathbf P_{\beta_0}$. So $\Psi_{w}$ is an isomorphism onto its image. For any $R>0$, let $$B_R=\left\{v\in\bigwedge^{\dim V_{\beta_0}} V: \|v\|\leq R\right\}$$ where $\|\cdot\|$ is a Euclidean norm on $\bigwedge^{\dim V_{\beta_0}}V$. We denote by $a_{w}$ the growth rate of the asymptotic volume estimate of the real variety $\textup{Im}(\Psi_{w})$ $$a_w=\lim_{R\to\infty}\frac{\log\mu_{\mathbf F_w}(\Psi_w^{-1}(B_R))}{\log R},$$ where $\mu_{\mathbf F_w}$ is the Haar measure on $\mathbf F_w(\mathbb R)$, and define by $$A_w=A_w(\Omega_{\beta_0}):=\limsup_{H\to\infty}\frac{\log|\{g\in\Omega_{\beta_0}\cap w\mathbf F_{w}w^{-1}(\mathbb Q): \height(g)\leq H\}|}{\log H}$$  the growth rate of the number of rational elements in $\Omega_{\beta_0}\cap w\mathbf F_{w}w^{-1}(\mathbb Q)$ where $\Omega_{\beta_0}$ is a fixed open bounded subset in $R_u(\overline{\mathbf P}_{\beta_0})(\mathbb R)$ containing a fundamental domain of $R_u(\overline{\mathbf P}_{\beta_0})(\mathbb R)/(R_u(\overline{\mathbf P}_{\beta_0})(\mathbb R)\cap\Gamma)$
(See \S\ref{ubd}). The definition of the height function $\height(\cdot)$ is given in \S\ref{pre} (Definition~\ref{d25}).  As before, let $\nu_0$ be any $\mathbb Q$-root relative to $\mathbf T$ in $R_u(\overline{\mathbf P}_0)$ such that $$\nu_0(a_1)=\max\{\alpha(a_1):\alpha\in\Phi(R_u(\overline{\mathbf P}_{0}))\}.$$

\begin{theorem}\label{mthm12}
Let $\mathbf G$ be a connected semisimple algebraic group defined over $\mathbb Q$, $\mathbf T$ a maximal $\mathbb Q$-split torus in $\mathbf G$ and $\{a_t\}_{t\in\mathbb R}$ a one-parameter subgroup in $\mathbf T(\mathbb R)$. Let $\rho$ be a finite-dimensional irreducible representation of $\mathbf G$ defined over $\mathbb Q$ on a complex vector space $V$ with $\dim\ker\rho=0$, and $\psi:\mathbb R_+\to\mathbb R_+$ a function on $\mathbb R_+$. Then 
\begin{align*}
&\dim_HS_\rho(\psi)^c\\
\leq&\max_{\substack{w\in{_{\mathbb Q}\overline{\mathcal W}}\\ \beta_0(wa_{-1}w^{-1})\\\geq\tau(\psi)}}\left\{\dim\mathbf G-\sum_{\alpha\in\Phi(\mathbf F_{w})}\frac{\alpha(a_1)}{\nu_{0}(a_1)}+\frac{(\beta_0(wa_{-1}w^{-1})-\tau(\psi))}{\nu_{0}(a_1)}\cdot\max\{a_{w},A_{w}\}\cdot\dim V_{\beta_0}\right\}
\end{align*}
for any $0\leq\tau(\psi)\leq\beta_0(a_{-1})$.
\end{theorem}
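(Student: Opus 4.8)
The plan is to establish the upper bound by the standard covering scheme for $\limsup$ sets in metric Diophantine approximation (Jarn\'ik--Besicovitch, Dodson), transported to the present setting. Since Hausdorff dimension is local and countably stable, I would fix a relatively compact open set $\Omega\subseteq\mathbf G(\mathbb R)$ and bound $\dim_H\!\big(S_\tau(\rho,\{a_t\}_{t\in\mathbb R})^c\cap\Omega\big)$, the final bound being the supremum over a countable cover of $\mathbf G(\mathbb R)$ by such $\Omega$. Discretizing time to integers (on each interval $[t,t+1]$ the systole varies by a factor bounded uniformly on $\Omega$), one has $S_\tau^c\cap\Omega\subseteq\bigcap_{j\geq1}\bigcup_{t\in\mathbb Z,\,t\geq j}E_t^{(j)}$ with $E_t^{(j)}=\bigcup_{v\in\mathbb Z^d\setminus\{0\}}\{g\in\Omega:\|\rho(a_tg)v\|<j^{-1}e^{-\tau t}\}$. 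To prove $\dim_H(S_\tau^c\cap\Omega)\leq s$ it then suffices, for each $s'>s$, to produce for every $j$ a cover of $\bigcup_{t\geq j}E_t^{(j)}$ by balls whose $s'$-volumes sum to a quantity that is finite and tends to $0$ as $j\to\infty$.

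Next I would cut down the witnesses. From $\|v\|\leq\|\rho(a_tg)^{-1}\|\cdot\|\rho(a_tg)v\|\leq\|\rho(g)^{-1}\|\,e^{t\beta_0(a_{-1})}\cdot j^{-1}e^{-\tau t}$ (the operator norm of $\rho(a_{-t})$ being $e^{t\beta_0(a_{-1})}$, attained on the highest weight space $V_{\beta_0}$ because $\beta_0(a_1)$ is the minimal weight value), only $v$ with $\|v\|\ll_\Omega e^{t(\beta_0(a_{-1})-\tau)}$ can occur, a genuine constraint precisely because $\tau<\beta_0(a_{-1})$. Decomposing $w:=\rho(g)v$ into $\mathbf T$-weight components, the condition $\|\rho(a_tg)v\|<j^{-1}e^{-\tau t}$ says exactly that $w$ lies in the box $\prod_\beta\{\|w_\beta\|<j^{-1}e^{t(-\tau-\beta(a_1))}\}$; the components along weights $\beta$ with $\beta(a_1)>-\tau$ must be exponentially small, so $\rho(g)v$ is forced exponentially close to the contracting directions, the extreme one being $V_{\beta_0}$. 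Thus ``$g$ is bad at time $t$'' means $g$ lies exponentially close to a rational resonant locus along which $\rho(g)$ carries a primitive rational vector of norm $\ll e^{t(\beta_0(a_{-1})-\tau)}$ into $V_{\beta_0}$; equivalently (using that $\mathbf P_{\beta_0}=\Stab(V_{\beta_0})$ and the Pl\"ucker embedding $\mathbf G/\mathbf P_{\beta_0}\hookrightarrow\mathbb P(\bigwedge^{\dim V_{\beta_0}}V)$), $g$ is controlled by a rational point of $\mathbf G/\mathbf P_{\beta_0}$ of bounded height.

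The core of the argument is then a simultaneous count organized by the Bruhat decomposition $\mathbf G(\mathbb Q)=\bigcup_{i\in\bar I}\mathbf P_{\beta_0}(\mathbb Q)\,w_i\,\mathbf F_{w_i}(\mathbb Q)$. Writing a rational resonant locus through the $i$-th cell as governed by a rational point $x\in w_i\mathbf F_{w_i}w_i^{-1}(\mathbb Q)$ whose exterior-power image $\Psi_{w_i}(x)=\rho_{\beta_0}(x)\,e_{V_{\beta_0}}\in\bigwedge^{\dim V_{\beta_0}}V$ is a primitive integral vector of norm $\ll e^{t(\beta_0(w_ia_{-1}w_i^{-1})-\tau)\dim V_{\beta_0}}$ (the exponent $\dim V_{\beta_0}$ coming from the $\dim V_{\beta_0}$ basis vectors entering $e_{V_{\beta_0}}$, and the conjugation by $w_i$ from transporting the weight $\beta_0$ through the cell representative), the number of such $x$ is $\ll \exp\!\big(t\,(\beta_0(w_ia_{-1}w_i^{-1})-\tau)\max\{a_{w_i},A_{w_i}\}\dim V_{\beta_0}\big)$ by the asymptotic volume and rational-point estimates for $\mathrm{Im}(\Psi_{w_i})$ recorded in Corollary~\ref{c68} and \eqref{eqn} (both growth rates enter because one governs how many integral points of bounded norm lie on the variety $\mathrm{Im}(\Psi_{w_i})$ and the other how tightly they can cluster). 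For fixed $i$ and fixed such $x$, the corresponding resonant locus meets $\Omega$ in a box that is $O(1)$ in the directions transverse to $w_i\mathbf F_{w_i}w_i^{-1}$ and exponentially thin, with width $\asymp e^{-\alpha(a_1)t}$, in the root direction $\alpha\in\Phi(\mathbf F_{w_i})$; covering it by balls of the uniform radius $r_t\asymp e^{-\nu_0(a_1)t}$ costs $\asymp\exp\!\big(t\,[\nu_0(a_1)\dim\mathbf G-\sum_{\alpha\in\Phi(\mathbf F_{w_i})}\alpha(a_1)]\big)$ balls (using $\alpha(a_1)\leq\nu_0(a_1)$ for $\alpha\in\Phi(\mathbf F_{w_i})\subseteq\Phi(R_u(\overline{\mathbf P}_0))$). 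Multiplying the two counts, summing over the finitely many $i\in\bar I$ with $\beta_0(w_ia_{-1}w_i^{-1})>\tau$ (cells with $\beta_0(w_ia_{-1}w_i^{-1})\leq\tau$ contribute no admissible witnesses for large $t$), and then over $t\geq j$, the total $s'$-mass is $\ll\sum_{i}\sum_{t\geq j}\exp\!\big(t\,\nu_0(a_1)(s_i^{*}-s')\big)$ where $s_i^{*}$ is exactly the $i$-th term in the asserted maximum; for $s'>\max_i s_i^{*}$ this is a geometric series tending to $0$ as $j\to\infty$, which yields the bound.

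The main obstacle is the geometric and arithmetic bookkeeping in the third paragraph: establishing, uniformly over the growing family of rational points supplied by the Bruhat decomposition, that the resonant loci genuinely have the box shape claimed with bounded overlap, that the conjugation by $w_i$ transports the root/weight data so that the exponents assemble precisely into $s_i^{*}$, and that the passage from the a priori $d$-dimensional family of witness vectors to the $\mathbf G/\mathbf P_{\beta_0}$-family of subspaces (via the exterior power $\bigwedge^{\dim V_{\beta_0}}V$ and the maps $\Psi_{w_i}$) loses nothing. This rests squarely on the effective volume and counting estimates for the unipotent orbits $\mathrm{Im}(\Psi_{w_i})$ --- the content of the quantities $a_{w_i},A_{w_i}$ and Corollary~\ref{c68} --- together with a careful choice of the covering radius $r_t$ so that no saving is discarded.
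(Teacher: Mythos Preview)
Your overall strategy --- a Jarn\'ik--Besicovitch covering of a $\limsup$ set organized by the Bruhat decomposition --- matches the paper's, but the passage in your second paragraph from ``one short vector'' to ``proximity to a rational point of $\mathbf G/\mathbf P_{\beta_0}$'' is a genuine gap. Knowing that the components of $\rho(g)v$ along weights $\beta$ with $\beta(a_1)>-\tau$ are small tells you only that $\rho(g)v$ lies near the sum of the remaining weight spaces, not that there is a full-rank rational sublattice of $\rho(g)\mathbb Z^d$ close to $V_{\beta_0}$; and it is the latter, not a single vector, that determines a rational point of the flag variety via the Pl\"ucker map. The paper fills this gap with reduction theory (Proposition~\ref{p61}): writing $g=\tilde k\cdot a\cdot x\cdot\gamma$ with $\tilde k$ in a fixed compact set, $a\in T_\eta$, $x\in\mathcal K\subset\mathbf G(\mathbb Q)$ and $\gamma\in\Gamma$, the existence of a single short vector forces $e^{\beta_0(a)}\lesssim r$ (comparing the $\tilde\beta$-component in \eqref{eqn1} with the highest weight via the Siegel condition $a\in T_\eta$), and then the \emph{entire} primitive integral sublattice $\tilde\Lambda\subset\mathbb Z^d$ whose $\rho(x)$-image spans $V_{\beta_0}$ has image of covolume $\lesssim r^{\dim V_{\beta_0}}$. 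This is what manufactures the rational element $q_k$ with controlled denominator and is where the factor $\dim V_{\beta_0}$ in the exponent really comes from; your weight-box argument does not produce it.

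Two further points where your sketch diverges from the paper. First, the resonant set attached to a fixed rational datum is not a box with sides $e^{-\alpha(a_1)t}$ along each $\alpha\in\Phi(\mathbf F_{w_i})$: it is $a_{-t}E_{w_i}(R)a_t$ with $E_{w_i}(R)=\Psi_{w_i}^{-1}(B_R)$ the preimage of a ball under a polynomial map, and the exponent $a_{w_i}$ is precisely the volume growth of this set (Corollary~\ref{c68}). The paper covers by cubes of side $\asymp e^{-\nu_0(a_1)t}$, uses the perturbation Lemma~\ref{l611} to keep the cover inside a slightly inflated $E_{w_i}$, projects to the compact quotient $R_u(\overline{\mathbf P}_0)(\mathbb R)/(\Gamma\cap R_u(\overline{\mathbf P}_0)(\mathbb R))$ (Lemma~\ref{l612}) to make the rational-point count finite, and then counts cubes by a volume ratio. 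Second, the appearance of $\max\{a_{w_i},A_{w_i}\}$ is not the product of two separate counts as you describe: it emerges from the dyadic sum
\[
\sum_{\theta_i\leq L\leq C_0\,e^{(\beta_0(w_ia_{-1}w_i^{-1})-\tau)t\,\dim V_{\beta_0}}}\frac{|\mathcal F(L)|}{L^{a_{w_i}+\epsilon}},
\]
which is dominated by the top of the range (contributing $A_{w_i}$) or the bottom (contributing $a_{w_i}$) according to the sign of $A_{w_i}-a_{w_i}$. Your multiplicative heuristic would always give $a_{w_i}+A_{w_i}$ rather than $\max\{a_{w_i},A_{w_i}\}$.
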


Combining Theorem~\ref{mthm11}, Theorem~\ref{mthm12} and \cite[Theorem 4]{MG14}, we obtain the following theorem.

\begin{theorem}\label{thm10}
Let $\mathbf G$ be a connected $\mathbb Q$-simple group, $\mathbf T$ a maximal $\mathbb Q$-split torus in $\mathbf G$ and $\{a_t\}_{t\in\mathbb R}$ a one-parameter subgroup in $\mathbf T(\mathbb R)$. Let $\rho$ be a finite-dimensional irreducible representation of $\mathbf G$ defined over $\mathbb Q$ on a complex vector space $V$ with $\dim\ker\rho=0$, and $\psi:\mathbb R_+\to\mathbb R_+$ a function on $\mathbb R_+$. Let $\beta_0$ and $\nu_0$ be defined as in Theorems~\ref{mthm11} and~\ref{mthm12}. Suppose that the highest weight $\beta_0$ is a multiple of $\sum_{\alpha\in\Phi(R_u(\overline{\mathbf P}_{\beta_0}))}{\alpha}$. Then 
\begin{align*}
\dim_H S_\rho(\psi)^c=&\dim\mathbf G-\frac{\tau(\psi)}{\beta_0(a_{-1})\nu_{0}(a_1)}\cdot\sum_{\alpha\in\Phi(R_u(\overline{\mathbf P}_{\beta_0}))}\alpha(a_1)
\end{align*}
for any $0\leq\tau(\psi)<\beta_0(a_{-1})$. In the case $\tau(\psi)=\beta_0(a_{-1})$, we have
\begin{enumerate}
\item If $\psi(t)\cdot e^{\beta_0(a_{-1})t}$ is bounded, then $S_\rho(\psi)^c=\emptyset$.
\item If $\psi(t)\cdot e^{\beta_0(a_{-1})t}$ is unbounded, then $S_\rho(\psi)^c\neq\emptyset$ and $$\dim_HS_\rho(\psi)^c=\dim\mathbf G-\frac{\tau(\psi)}{\beta_0(a_{-1})\nu_{0}(a_1)}\cdot\sum_{\alpha\in\Phi(R_u(\overline{\mathbf P}_{\beta_0}))}\alpha(a_1).$$
\end{enumerate}
\end{theorem}

Any irreducible $\mathbb Q$-rational representation of a $\mathbb Q$-rank one $\mathbb Q$-simple group satisfies the conditions of Theorem~\ref{thm10}. In this case, Theorem~\ref{thm10} recovers and largely extends \cite[Theorem 1.1]{Z19} in the arithmetic setting. We will discuss the relation between Theorem~\ref{thm10} and \cite[Theorem 1.1]{Z19} in \S\ref{cor}. We remark that in \cite{Z19}, we also deal with the non-arithmetic case due to the fact that there are non-arithmetic lattices in $\mathbb R$-rank one simple groups.

Besides $\mathbb Q$-rank one $\mathbb Q$-simple groups, in the following we also list some other examples of Theorem~\ref{thm10}. The first example concerns the standard representation of $\SL_n$ and any one-parameter diagonal subgroup $\{a_t\}_{t\in\mathbb R}$ in $\SL_n(\mathbb R)$. It can be considered as a generalization of \cite{BD86, D92} from the viewpoint of shrinking target problem. 

\begin{theorem}\label{thm11}
Let $\rho:\SL_n\to\GL(V)$ be the standard representation of $\SL_n$ on $V=\mathbb C^n$ defined via matrix multiplication $$\rho(g)\cdot v=g\cdot v\quad(g\in\SL_n, v\in V).$$ Let $\{a_t\}_{t\in\mathbb R}$ be a one-parameter diagonal subgroup in $\SL_n(\mathbb R)$, $\beta_0$ the highest weight of $\rho$ with respect to $\{a_t\}_{t\in\mathbb R}$ and $\nu_0$ the $\mathbb Q$-root in $\SL_n$ defined as in Theorem~\ref{thm10}. Let $\psi:\mathbb R_+\to\mathbb R_+$ be a function on $\mathbb R_+$. Then we have $$\dim_HS_\rho(\psi)^c=\dim\SL_n-\frac{n\cdot\tau(\psi)}{\nu_0(a_1)}$$
for any $0\leq\tau(\psi)<\beta_0(a_{-1})$. In the case $\tau(\psi)=\beta_0(a_{-1})$, we have
\begin{enumerate}
\item If $\psi(t)\cdot e^{\beta_0(a_{-1})t}$ is bounded, then $S_\rho(\psi)^c=\emptyset$.
\item If $\psi(t)\cdot e^{\beta_0(a_{-1})t}$ is unbounded, then $S_\rho(\psi)^c\neq\emptyset$ and $$\dim_HS_\rho(\psi)^c=\dim\SL_n-\frac{n\cdot\tau(\psi)}{\nu_{0}(a_1)}.$$
\end{enumerate}
\end{theorem}

The second example deals with the adjoint representation of $\SL_n$. We will prove in \S\ref{cor} that it generalizes the main result of \cite{FZ22}.
\begin{theorem}\label{thm12}
Let $\rho:\SL_n\to\GL(V)$ be the adjoint representation of $\SL_n$ where $V=\mathfrak{sl}_n$ is the Lie algebra of $\SL_n$. Let $\{a_t\}_{t\in\mathbb R}$ be a one-parameter diagonal subgroup in $\SL_n(\mathbb R)$, $\nu_0$ the $\mathbb Q$-root in $\SL_n$ defined as in Theorem~\ref{thm10} and $\psi:\mathbb R_+\to\mathbb R_+$ a function on $\mathbb R_+$. Then we have $$\dim_HS_\rho(\psi)^c=\dim\SL_n-\frac{(n-1)\cdot\tau(\psi)}{\nu_0(a_1)}$$ for any $0\leq\tau(\psi)<\nu_0(a_1)$. In the case $\tau(\psi)=\nu_0(a_{1})$, we have
\begin{enumerate}
\item If $\psi(t)\cdot e^{\nu_0(a_{1})t}$ is bounded, then $S_\rho(\psi)^c=\emptyset$.
\item If $\psi(t)\cdot e^{\nu_0(a_{1})t}$ is unbounded, then $S_\rho(\psi)^c\neq\emptyset$ and $$\dim_HS_\rho(\psi)^c=\dim\SL_n-\frac{(n-1)\cdot\tau(\psi)}{\nu_{0}(a_1)}.$$
\end{enumerate}
\end{theorem}

\begin{remark}\label{r19}
One can see from the proofs that all the theorems stated above also hold for $\dim_H(S_\rho(\psi)^c\cap U)$ where $U$ is any open bounded subset in $\mathbf G(\mathbb R)$. (See Remark~\ref{r56}.)
\end{remark}

\subsection{Connections to Diophantine approximation}
In this subsection, we discuss two connections of Theorem~\ref{thm10} to the metric theory of Diophantine approximation on certain algebraic varieties, and present some related Jarn\'ik-Besicovitch type theorems.

\subsubsection{Connection to Diophantine approximation on flag varieties}\label{flag}
Diophantine approximation on group varieties is an open problem raised by Lang \cite{L65}. In recent years, many important results have been established about Diophantine approximation on various group varieties including spheres, quadrics and algebraic groups \cite{GGN14,GGN15,FKMS22,GGN22,AG22, d22, Kd18, D05, KM15, GGN18, GGN13}. Here we discuss a connection between Theorem~\ref{thm10} and Diophantine approximation on generalized flag varieties, and derive an analogue of Jarn\'ik-Besicovitch theorem. We mainly follow the exposition in \cite{d21} by de Saxc\'e. For more details about Diophantine approximation on flag varieties, one may refer to \cite{d21}.

Let $\mathbf X=\mathbf P\backslash\mathbf G$ be a flag variety where $\mathbf G$ is a connected $\mathbb Q$-simple group and $\mathbf P$ a parabolic $\mathbb Q$-subgroup of $\mathbf G$. Let $V$ be a finite dimensional vector space defined over $\mathbb Q$, $\|\cdot\|$ a Euclidean norm on $V$ and $(e_i)_{1\leq i\leq d}$ a rational orthonormal basis of $V$. For any rational point $v$ in $\mathbf P(V)(\mathbb Q)$, we define the height $H(v)$ of $v$ by $$H(v)=\|\mathbf v\|$$ where $\mathbf v$ is the representative primitive integral vector (up to sign) of $v$ in $\oplus_{1\leq i\leq d}\mathbb Z e_i$. Note that the height function $H(\cdot)$ is defined up to a multiplicative constant.
  
If $\rho_\chi:\mathbf G\to\GL(V)$ is an irreducible $\mathbb Q$-rational representation of $\mathbf G$ generated by a one-dimensional weight space $V_{\chi}\subset V$ of highest weight $\chi$ such that the stabilizer of $V_{\chi}$ is $\mathbf P$, then we may identify $\mathbf X=\mathbf P\backslash\mathbf G$ with orbit $\mathbf G\cdot V_{\chi}$ in $\mathbf P(V)$ by $$\mathbf X\to\mathbf G\cdot V_\chi,\quad\mathbf P\cdot g\mapsto\rho_\chi(g^{-1})\cdot V_\chi\in\mathbf G\cdot V_\chi$$ and the height of any rational point in $\mathbf X=\mathbf P\backslash\mathbf G$ can be defined by restriction of the height function $H(\cdot)$ on $\mathbf G\cdot V_\chi\subset\mathbf P(V)$. 
 
As in Theorem~\ref{thm10}, we may choose a maximal $\mathbb Q$-split torus $\mathbf T$ and a minimal parabolic $\mathbb Q$-subgroup $\mathbf P_0$ in $\mathbf G$ such that $\mathbf T\subset\mathbf P_0\subset\mathbf P$. Then $\mathbf P_0$ and $\mathbf T$ define a root system ($\Phi,\;\Phi^+,\;\Delta$) where $\Phi$ is the set of $\mathbb Q$-roots relative to $\mathbf T$, $\Phi^+$ is the set of positive $\mathbb Q$-roots determined by $\mathbf P_0$ and $\Delta$ is the set of simple $\mathbb Q$-roots in $\Phi^+$. We denote by $R_u(\mathbf P)$ (resp. $R_u(\mathbf P_0)$) the unipotent radical of $\mathbf P$ (resp. $\mathbf P_0$), and write $\Phi(R_u(\mathbf P))$ for the set of $\mathbb Q$-roots in $R_u(\mathbf P)$ relative to $\mathbf T$. The symbol $\sum_{\alpha\in\Phi(R_u(\mathbf P))}$ stands for the sum over all $\mathbb Q$-roots $\alpha\in\Phi(R_u(\mathbf P))$ counted with multiplicities (i.e., the dimensions of the $\mathbb Q$-root spaces associated to $\alpha\in\Phi(R_u(\mathbf P))$ in the Lie algebra of $R_u(\mathbf P)$).

In order to apply Theorem~\ref{thm10}, we choose a one-parameter subgroup $\{a_t\}_{t\in\mathbb R}$ in $\mathbf T(\mathbb R)$ as follows: $a_t=\exp(t\cdot Y)$ where $Y$ is an element in the Lie algebra of $\mathbf T(\mathbb R)$ satisfying $$\begin{cases} \alpha(Y)=\alpha(a_1)=0,\textup{ if }\alpha\in\Delta_{\mathbf P}\\ \alpha(Y)=\alpha(a_1)=-1,\textup{ if }\alpha\in\Delta\setminus\Delta_{\mathbf P}\end{cases}.$$ Here $\Delta_{\mathbf P}\subset\Delta$ is the set of simple $\mathbb Q$-roots associated to $\mathbf P$ . We denote by $\nu_{0}$ any $\mathbb Q$-root relative to $\mathbf T$ in $\Phi^+$ satisfying $$\nu_{0}(a_{-1})=\max\{\alpha(a_{-1}):\alpha\in\Phi^+\}.$$
 
On the variety $\mathbf X(\mathbb R)$, there is a Carnot-Carath\'eodory distance $d_{\textup{CC}}$, and its explicit definition can be found in \cite[Chapter 2, \S2]{d21}. With this Carnot-Carath\'eodory distance, for any $x\in\mathbf X(\mathbb R)$, one may define the Diophantine exponent $\beta_\chi(x)$ of $x$ by $$\beta_\chi(x)=\inf\{\beta>0:\exists\textup{$C>0$ such that }d_{\textup{CC}}(x,v)\geq C\cdot H(v)^{-\beta}\;(\forall v\in\mathbf X(\mathbb Q))\}.$$ We have the following theorem about the Diophantine exponent $\beta_\chi(x)$.
 
\begin{theorem}[{\cite[Theorem 2.4.5]{d21}}]\label{thm110}
There exits a positive constant $\beta_\chi>0$ such that for almost every $x\in\mathbf X(\mathbb R)$ (with respect to a Riemannian volume form on $\mathbf X(\mathbb R)$) $$\beta_\chi(x)=\beta_\chi.$$ Moreover, we have $\beta_\chi=-1/\chi(Y)=1/\chi(a_{-1})$.
\end{theorem}

\begin{definition}\label{def111}
For any decreasing function $\psi:\mathbb R_+\to\mathbb R_+$, define the set of $\psi$-Diophantine points in $\mathbf X(\mathbb R)$ by $$E_{\mathbf X}(\psi)=\{x\in\mathbf X(\mathbb R): \exists C>0 \textup{ such that } d_{\textup{CC}}(x,v)\geq C\cdot(H(v))^{-\beta_\chi}\psi(H(v))\;(\forall v\in\mathbf X(\mathbb Q))\}$$ where the constant $\beta_\chi$ is defined as in Theorem~\ref{thm110}. We denote by $E_{\mathbf X}(\psi)^c$ the complement of $E_{\mathbf X}(\psi)$ in $\mathbf X(\mathbb R)$. 
\end{definition}
For any function $\psi:\mathbb R_+\to\mathbb R_+$, define the lower order at infinity of $\psi$ by $$\gamma(\psi)=\liminf_{t\to\infty}\left(-\frac{\ln\psi(t)}{\ln t}\right).$$ From a correspondence theorem \cite[Proposition 3.2.4]{d21} and Theorem~\ref{thm10}, we may deduce the following result. 
 \begin{theorem}\label{thm13}
Let $\psi:\mathbb R_+\to\mathbb R_+$ be a decreasing function. Then $0\leq\gamma(\psi)\leq\infty$. If the highest weight $\chi$ of $\rho_\chi$ is a multiple of $\sum_{\alpha\in\Phi(R_u(\mathbf P))}{\alpha}$, then the Hausdorff dimension of $E_{\mathbf X}(\psi)^c$ (with respect to a standard Riemannian metric on $\mathbf X(\mathbb R)$) equals 
\begin{align*}
\dim_H E_{\mathbf X}(\psi)^c=&\dim\mathbf X-\left(\frac1{\beta_\chi}-\frac1{\gamma(\psi)+\beta_\chi}\right)\frac{1}{\chi(a_{-1})\nu_0(a_{-1})}\cdot\sum_{\alpha\in\Phi(R_u(\mathbf P))}\alpha(a_{-1})
\end{align*}
for any $0\leq\gamma(\psi)\leq\infty$.
 \end{theorem}

\subsubsection{Connection to rational approximation to linear subspaces}
Diophantine approximation on the Grassmann variety of $l$-dimensional subspaces in $\mathbb R^n$ by $k$-dimensional $\mathbb Q$-subspaces $(1\leq k\leq l<n)$ is a problem suggested by Schmidt in \cite{Sch67}. It has attracted increasing attention and one may refer to e.g. \cite{M20,d24,C24,J22,Jo22,G23}. Recently, de Saxc\'e has made a breakthrough in \cite{d25} which answers several Schmidt's problems in \cite{Sch67}. Here we discuss a connection between Theorem~\ref{thm10} and \cite{d25}. We mainly follow the setup in \cite{d25}.

For any $1\leq l\leq n-1$ ($l\in\mathbb N$), let $X_l(\mathbb R)$ be the Grassmann variety of $l$-dimensional linear $\mathbb R$-subspaces in $\mathbb R^n$. Let $X_l(\mathbb Q)$ be the set of $\mathbb Q$-rational points in $X_l(\mathbb R)$, i.e. the set of $l$-dimensional $\mathbb Q$-subspaces in $\mathbb R^n$. We define a distance $d(\cdot,\cdot)$ between subspaces in $\mathbb R^n$ as follows. Let $P_1$ and $P_2$ be two linear $\mathbb R$-subspaces. If $P_1=\mathbb R\cdot v_1$ and $P_2=\mathbb R\cdot v_2$ are two lines $(v_1,v_2\in\mathbb R^n\setminus\{0\})$, then the distance between $P_1$ and $P_2$ is the usual distance on the projective space $\mathbf P\mathbb R^{n-1}$, i.e. $$d(P_1,P_2)=\sin(\theta(P_1,P_2))=\frac{\|v_1\wedge v_2\|}{\|v_1\|\|v_2\|}$$ where $\theta(P_1,P_2)$ is the angle between $P_1$ and $P_2$, and the norms are the Euclidean norms on $\mathbb R^n$ and $\bigwedge^2\mathbb R^n$ respectively. If $P_1$ is a line, then the distance between $P_1$ and $P_2$ is defined by $$d(P_1,P_2)=\min\{d(P_1,Q): Q\textup{ is a one-dimensional linear subspace in } P_2\}.$$ If $P_1$ and $P_2$ are two general subspaces in $\mathbb R^n$, then we define the distance between $P_1$ and $P_2$ by $$d(P_1,P_2)=\begin{cases}\max\{d(Q,P_2):Q\textup{ is a one-dimensional subspace in } P_1\},\textup{ if }\dim P_1\leq\dim P_2\\\max\{d(Q,P_1):Q\textup{ is a one-dimensional subspace in } P_2\},\textup{ if } \dim P_2\leq\dim P_1\end{cases}$$

For $1\leq k\leq n-1$, we define a height function $H(\cdot)$ on $X_k(\mathbb Q)$ as follows. For any $\mathbb Q$-subspace $P\in X_k(\mathbb Q)$, we choose an integral basis $\{v_i\}_{1\leq i\leq k}\subset\mathbb Z^n$ for $P$ and define the height of $P$ by $$H(P)=\|v_1\wedge\cdots\wedge v_k\|$$ where the norm is the Euclidean norm on $\bigwedge^k\mathbb R^n$.

Now let $1\leq k\leq l<n$. With the distance $d(\cdot,\cdot)$ and the height function $H(\cdot)$ as above, for any $x\in X_l(\mathbb R)$, we may define the Diophantine exponent of $x$ for approximation by $k$-dimensional rational subspaces by $$\beta_k(x)=\inf\{\beta>0:d(v,x)\geq C\cdot H(v)^{-\beta}\;(\forall v\in X_k(\mathbb Q))\textup{ for some }C>0\}.$$

\begin{theorem}[{\cite[Theorem 1]{d25}}]
Let $1\leq k\leq l<n$. For any $x\in X_l(\mathbb R)$, we have $$\beta_k(x)\geq\frac{n}{k(n-l)}$$ and the equality holds for almost every point $x\in X_l(\mathbb R)$ with respect to a Riemannian volume form  on $X_l(\mathbb R)$.
\end{theorem}

\begin{definition}\label{def114}
For any decreasing function $\psi:\mathbb R_+\to\mathbb R_+$, define the set of $\psi$-Diophantine subspaces in $X_l(\mathbb R)$ for approximation by $k$-dimensional $\mathbb Q$-subspaces by $$E_{l,k}(\psi)=\{x\in X_l(\mathbb R):d(v,x)\geq C\cdot H(v)^{-\frac{n}{k(n-l)}}\cdot\psi(H(v))\;(\forall v\in X_k(\mathbb Q))\textup{ for some }C>0\}.$$ We denote by $E_{l,k}(\psi)^c$ the complement of $E_{l,k}(\psi)$ in $X_l(\mathbb R)$.
\end{definition}

As in \S\ref{flag}, for any function $\psi:\mathbb R_+\to\mathbb R_+$, define the lower order at infinity of $\psi$ by $$\gamma(\psi)=\liminf_{t\to\infty}\left(-\frac{\ln\psi(t)}{\ln t}\right).$$ From a correspondence theorem \cite[Proposition 1]{d25} and Theorem~\ref{thm10} with the representation $\rho_k:\SL_n\to\GL(V)$ $(V=\bigwedge^k\mathbb C^n)$ which is the $k$-th exterior product of the standard representation of $\SL_n$, we may deduce the following result. 
\begin{theorem}\label{thm15}
Let $\psi:\mathbb R_+\to\mathbb R_+$ be a decreasing function, and $1\leq k\leq l<n$. Then $0\leq\gamma(\psi)\leq\infty$, and the Hausdorff dimension of $E_{l,k}(\psi)^c$ (with respect to a standard Riemannian metric on $X_l(\mathbb R)$) equals $$\dim_HE_{l,k}(\psi)^c=(l-k)(n-l)+\frac{n}{n/(k(n-l))+\gamma(\psi)}$$ for any $0\leq\gamma(\psi)\leq\infty$.
\end{theorem}

 \subsection{Strategy of the proofs and organization of the paper}
The investigation of the shrinking target problem in $\SL_3(\mathbb R)/\SL_3(\mathbb Z)$ in \cite{FZ22} provides a strategy to address the main problem above. However, in this paper, since we are dealing with a shrinking target problem in a homogeneous space of a semisimple algebraic group $\mathbf G$ from the perspective of representations, the studies on the structure of the group $\mathbf G$ and the structure of the representation $\rho$ would be fundamental in our analysis. Due to the complexities of the structures of $\mathbf G$ and $\rho$, we have to upgrade the method in \cite{FZ22} considerably to tackle the main problem. Here we may reduce the problem to computing $\dim_HS_\rho(\psi)^c\cap R_u(\overline{\mathbf P}_0)(\mathbb R)$.

We first define a notion of rational elements in $\mathbf G(\mathbb R)$ according to the representation $\rho:\mathbf G\to\GL(V)$, and mainly focus on the rational elements in $R_u(\overline{\mathbf P}_0)(\mathbb R)$. The set of rational elements in $R_u(\overline{\mathbf P}_0)(\mathbb R)$ is usually not countable, but a collection of leaves in a foliation in $R_u(\overline{\mathbf P}_0)(\mathbb R)$. To get a lower bound of the Hausdorff dimension of $S_\rho(\psi)^c\cap R_u(\overline{\mathbf P}_0)(\mathbb R)$, we would like to choose neighborhoods around these (leaves of) rational elements and construct a Cantor-type subset. In order to apply Hausdorff dimension formulas, at each level of the Cantor-type subset, the neighborhoods should be disjoint. In our case, it may happen that the neighborhoods at each level are not disjoint if we proceed in the usual way. It implies that there are surplus rational elements in the selected leafs, and we have to sieve them out so that the neighborhoods around the remaining rational elements are disjoint. It is also required that the proportion of the remaining rational elements is not small compared to the set of all rational elements in order to avoid any loss of Hausdorff dimension. For this purpose, we will need the mixing property of the flow $\{a_t\}_{t\in\mathbb R}$. The mixing property of $\{a_t\}_{t\in\mathbb R}$, together with the structure of the Siegel sets in $\mathbf G(\mathbb R)$, not only helps us sieve out surplus rational elements, but also gives an asymptotic estimate about the measure of the subset of the remaining rational elements. Then we may apply the Hausdorff dimension formula of a Cantor-type set and get a lower bound for $\dim_HS_\rho(\psi)^c\cap R_u(\overline{\mathbf P}_0)(\mathbb R)$.

To get an upper bound of $\dim_HS_\rho(\psi)^c\cap R_u(\overline{\mathbf P}_0)(\mathbb R)$, we will need the Bruhat decomposition of $\mathbf G$, or precisely the decomposition $$\mathbf G(\mathbb Q)=\bigcup_{w\in{_{\mathbb Q}\overline{\mathcal W}}}\mathbf P_{\beta_0}(\mathbb Q)\cdot w\cdot\mathbf F_{w}(\mathbb Q).$$ For any $p\in S_\rho(\psi)^c\cap R_u(\overline{\mathbf P}_0)(\mathbb R)$, from the structures of the representation $\rho$ and the Siegel sets in $\mathbf G(\mathbb R)$, one may deduce that there exist a Weyl element $w\in{_\mathbb Q\overline{\mathcal W}}$, a sequence of rational elements $\{q_k\}_{k\in\mathbb N}$ and a sequence of subsets $\{E_k\}_{k\in\mathbb N}$ in $\mathbf F_{w}(\mathbb R)$ (defined from the preimages of certain open subsets under the morphism $\Psi_w$ contracted by the action of $\{a_t\}_{t\in\mathbb R}$) such that $$p\in E_k\cdot w^{-1}q_k\;(k\in\mathbb N).$$ So we can divide the set $S_\rho(\psi)^c\cap R_u(\overline{\mathbf P}_0)(\mathbb R)$ into finitely many subsets $\mathcal E_{w,\epsilon}(\psi)$ (indexed by the elements $w\in{_\mathbb Q\overline{\mathcal W}}$),  each of which corresponds to a type of Diophantine approximation in the sense that any $p\in\mathcal E_{w,\epsilon}(\psi)$ is covered by infinitely many subsets of the form $E\cdot w^{-1}q$ described as above. Now fix $w\in{_\mathbb Q\overline{\mathcal W}}$. As mentioned before, the set of rational elements $q$ is a collection of leaves in some foliation in $\mathbf G(\mathbb R)$, and by the structure of the representation $\rho$ and the morphism $\Psi_w$, the shape of the subset $E$ defined above may be arbitrary and there may be rational elements $q$ far away from a point $p\in\mathcal E_{w,\epsilon}(\psi)$ but one still has $p\in E\cdot w^{-1}q$. So if we construct an open cover of $\mathcal E_{w,\epsilon}(\psi)$ directly from cutting all the subsets of the form $E\cdot w^{-1}q$ into cubes, there would be many unnecessary open cubes counted in the open cover. In order to obtain an upper bound of the Hausdorff dimension of $\mathcal E_{w,\epsilon}(\psi)$, we project the subset $\mathcal E_{w,\epsilon}(\psi)$ into the compact quotient space $R_u(\overline{\mathbf P}_0)(\mathbb R)/(R_u(\overline{\mathbf P}_0)(\mathbb R)\cap\Gamma)$, and construct an open cover for the projection of $\mathcal E_{w,\epsilon}(\psi)$ instead. The projections of the subsets of the form $E\cdot w^{-1}q$ overlap in $R_u(\overline{\mathbf P}_0)(\mathbb R)/(R_u(\overline{\mathbf P}_0)(\mathbb R)\cap\Gamma)$, and by a careful analysis, one can show that the union of these projections can be described in a quite economical way, from which an efficient open cover can be constructed for the projection of $\mathcal E_{w,\epsilon}(\psi)$. Then by the countable stability of Hausdorff dimension, we may obtain an upper bound of $\dim_H\mathcal E_{w,\epsilon}(\psi)$ for each $w\in{_\mathbb Q\overline{\mathcal W}}$, and consequently an upper bound for $\dim_HS_\rho(\psi)^c\cap R_u(\overline{\mathbf P}_0)(\mathbb R)$. Note that in this way, we are able to avoid many computations in \cite{FZ22}, and the only important information we need is about the growth rates of the volume estimates of certain real algebraic varieties and the growth rates of the numbers of rational points in these algebraic varieties, for which we may appeal to algebraic geometry (Cf. Manin's conjecture \cite{BM90}).
 
The paper is organized as follows:
\begin{enumerate}
\item[$\bullet$] In \S\ref{pre}, we first give introductions to the reduction theory of arithmetic lattices and the theory of irreducible representations of complex semisimple groups and complex Lie algebras. Then we prove Theorem~\ref{range}. After that, we define a notion of rational elements and discuss some properties of the rational elements which will be important in the subsequent sections.
\item[$\bullet$] In \S\ref{counting}, we assume that the action of $\{a_t\}_{t\in\mathbb R}$ on any arithmetic quotient $\mathbf G(\mathbb R)^0/\Gamma$ is mixing, where $\mathbf G(\mathbb R)^0$ is the identity component of $\mathbf G(\mathbb R)$ (as a Lie group) and $\Gamma$ is a subgroup of finite index in $\mathbf G(\mathbb Z)$. We select certain rational elements in $R_u(\overline{\mathbf P}_0)(\mathbb R)$ according to some algebraic conditions, and apply the mixing property to obtain an asymptotic estimate about the measure of the subset of the selected rational elements. In \S\ref{lbd1}, we construct a Cantor-type subset $\mathbf A_\infty$ in $R_u(\overline{\mathbf P}_0)(\mathbb R)$ from neighborhoods of these selected rational elements. With the asymptotic estimate result established in \S\ref{counting} and the condition that $0\leq\tau(\psi)<\beta_0(a_{-1})$, we compute the Hausdorff dimension of $\mathbf A_\infty$ and prove Theorem~\ref{mthm11}.
\item[$\bullet$] In \S\ref{lbd2}, we discuss the case where $0\leq\tau(\psi)<\beta_0(a_{-1})$ but the action of $\{a_t\}_{t\in\mathbb R}$ on any arithmetic quotient of $\mathbf G(\mathbb R)^0$ is not mixing. Then there is a proper normal $\mathbb Q$-subgroup $\mathbf H$ of $\mathbf G$ such that $\mathbf H$ contains $\{a_t\}_{t\in\mathbb R}$ and the action of $\{a_t\}_{t\in\mathbb R}$ on any arithmetic quotient of $\mathbf H(\mathbb R)^0$(= the identity component of $\mathbf H(\mathbb R)$) is mixing.  We show that the arguments in \S\ref{counting} and \S\ref{lbd1} work equally well in the group $\mathbf H(\mathbb R)$ and Theorem~\ref{mthm11} still holds in this case. At the end of this section, we discuss the case $\tau(\psi)=\beta_0(a_{-1})$. This completes the proof of Theorem~\ref{mthm11}.
\item[$\bullet$] In \S\ref{ubd}, we divide the subset $S_\rho(\psi)^c\cap R_u(\overline{\mathbf P}_0)(\mathbb R)$ into finitely many subsets $\mathcal E_{w,\epsilon}(\psi)$ indexed by elements $w\in{_\mathbb Q\overline{\mathcal W}}$, each of which corresponds to a type of Diophantine approximation. We project the subsets $\mathcal E_{w,\epsilon}(\psi)$ into the quotient space $R_u(\overline{\mathbf P}_0)(\mathbb R)/R_u(\overline{\mathbf P}_0)(\mathbb R)\cap\Gamma$ and construct open covers for the projections of $\mathcal E_{w,\epsilon}(\psi)$. This gives an upper bound of the Hausdorff dimension of $S_\rho(\psi)^c\cap R_u(\overline{\mathbf P}_0)(\mathbb R)$ and proves Theorem~\ref{mthm12}.
\item[$\bullet$] In \S\ref{cor}, we derive Theorems~\ref{thm10},~\ref{thm11} and~\ref{thm12} from Theorems~\ref{mthm11} and ~\ref{mthm12}. The proofs of Theorem~\ref{thm13} and Theorem~\ref{thm15} are given in \S\ref{Diophantine}.
\end{enumerate}
 
\section{Preliminaries}\label{pre}
In this section, we list some preliminaries needed in this paper and prove Theorem~\ref{range}. Then we give a definition of rational elements in $\mathbf G(\mathbb R)$ and discuss some properties of rational elements. Let $\mathbf G$, $\mathbf T$, $\{a_t\}_{t\in\mathbb R}$ and the representation $\rho:\mathbf G\to\GL(V)$ satisfy the assumption in \S\ref{results}. 

We first need the reduction theory of arithmetic subgroups of $\mathbf G(\mathbb R)$ \cite{BH62,B69}. Let $K$ be a maximal compact subgroup in $\mathbf G(\mathbb R)$ and $\Gamma$ an arithmetic subgroup in $\mathbf G(\mathbb Z)$. We fix a minimal parabolic $\mathbb Q$-subgroup $\mathbf P_0$ in $\mathbf G$ containing $\mathbf T$ with the Levi subgroup $Z(\mathbf T)$ (the centralizer of $\mathbf T$ in $\mathbf G$). Denote by $\Phi$ the set of $\mathbb Q$-roots in $\mathbf G$ with respect to $\mathbf T$, $\Phi^+$ the set of positive $\mathbb Q$-roots corresponding to the minimal parabolic $\mathbb Q$-subgroup $\mathbf P_0$ and $\Delta$ the set of simple $\mathbb Q$-roots in $\Phi^+$. For any parabolic $\mathbb Q$-subgroup $\mathbf P$, denote by $R_u(\mathbf P)$ its unipotent radical. We write $\mathbf P_0=\mathbf M_0\cdot R_u(\mathbf P_0)$ where $\mathbf M_0=Z(\mathbf T)$ is the Levi subgroup, and write $\mathbf M_{a}$ for the maximal $\mathbb Q$-anisotropic subgroup in $\mathbf M_0$ so that $\mathbf M_0=\mathbf T\cdot\mathbf M_{a}$. Let $M=\mathbf M_a(\mathbb R)^0$ be the identity component of the Lie group $\mathbf M_a(\mathbb R)$. For $\eta>0$, denote by $$T_\eta=\{a\in\mathbf T(\mathbb R):\lambda(a)\leq\eta,\;\lambda\textup{ a simple root in }\Delta\}.$$ A Siegel set in $\mathbf G(\mathbb R)$ is a subset of the form $S_{\eta,\Omega}=K\cdot T_\eta\cdot\Omega$ for some $\eta>0$ and some relatively compact open subset $\Omega$ containing identity in $M\cdot R_u(\mathbf P_0)(\mathbb R)$, and the group $\mathbf G(\mathbb R)$ can be written as $$\mathbf G(\mathbb R)=S_{\eta,\Omega}\cdot\mathcal K\cdot\Gamma$$ for some Siegel set $S_{\eta,\Omega}$ and a finite subset $\mathcal K\subset\mathbf G(\mathbb Q)$. Moreover, the finite set $\mathcal K$ satisfies the property that $$\mathbf G(\mathbb Q)=\mathbf P_0(\mathbb Q)\cdot \mathcal K\cdot\Gamma$$ where $\mathbf P_0$ is the minimal parabolic $\mathbb Q$-subgroup in $\mathbf G$. Denote by $$\mathcal K=\{x_1,x_2,\dots,x_k\}=\{x_j\}_{j\in J}\subset\mathbf G(\mathbb Q)$$ and we may assume that $e\in\mathcal K$. In what follows, we choose $\Gamma$ to be an arithmetic subgroup in $\mathbf G(\mathbb Z)\cap\mathbf G(\mathbb R)^0$, where $\mathbf G(\mathbb R)^0$ denotes the identity component of the Lie group $\mathbf G(\mathbb R)$, so that $\rho(\Gamma)$ preserves the lattice $\mathbb Z^d$ in $V$. Without loss of generality, we may assume that the stable horospherical subgroup of $\{a_t\}_{t\in\mathbb R}$ is contained in $R_u(\mathbf P_0)$ and the unstable horospherical subgroup of $\{a_t\}_{t\in\mathbb R}$ is contained in $R_u(\overline{\mathbf P}_0)$, where $\overline{\mathbf P}_0$ is the opposite minimal parabolic $\mathbb Q$-subgroup of $\mathbf P_0$ determined by $\Phi\setminus\Phi^+$ with the same Levi subgroup $\mathbf M_0=Z(\mathbf T)$.

Now we choose a maximal $\mathbb Q$-torus $\mathbf S$ in $\mathbf P_0$ containing $\mathbf T$. The Lie algebra $\mathfrak g$ of $\mathbf G$ can be written as a direct sum of root spaces relative to $\mathbf S$ via the adjoint representation of $\mathbf G$ $$\mathfrak g=\mathfrak g_0\oplus\bigoplus_{\alpha\in\Psi}\mathfrak g_{\alpha},$$ where $\Psi$ is the set of roots in $\mathfrak g$ relative to $\mathbf S$. We may determine a set of positive roots in $\Psi$, which we denote by $\Psi^+$, such that $$\Lie(R_u(\mathbf P_0))\subset\sum_{\alpha\in\Psi^+}\mathfrak g_\alpha\textup{ and }\Lie(R_u(\overline{\mathbf P}_0))\subset\sum_{\alpha\in\Psi\setminus\Psi^+}\mathfrak g_\alpha.$$ Here $\Lie(R_u(\mathbf P_0))$ and $\Lie(R_u(\overline{\mathbf P}_0))$ denote the Lie algebras of $R_u(\mathbf P_0)$ and $R_u(\overline{\mathbf P}_0)$ respectively. The set of simple roots in $\Psi^+$ is denoted by $\Pi$.

It is known that there are complete classifications of irreducible representations of complex semisimple groups and Lie algebras \cite{H72, K86}. Let $V$ be the vector space in the representation $\rho$, and $$V=\bigoplus_{\lambda} V_{\lambda}$$ the decomposition of $V$ into the direct sum of weight spaces $V_\lambda$ relative to $\mathbf S$. According to the theorem of highest weight, there is a unique highest weight $\lambda_0$ among the weights $\lambda$'s (the order is determined by $(\Psi,\Psi^+,\Pi)$) such that
\begin{enumerate}
\item $\dim_{\mathbb C} V_{\lambda_0}=1$.
\item for any $\alpha\in\Psi^+$, any $E_\alpha\in\mathfrak g_\alpha$ annihilates $V_{\lambda_0}$ via the differential $d\rho$ of $\rho$, any $n_\alpha\in\exp(\mathfrak g_\alpha)$ fixes elements in $V_{\lambda_0}$ via $\rho$ (where $\exp$ is the exponential map), and elements of $V_{\lambda_0}$ are the only vectors with this property.
\item every weight $\lambda$ in $\rho$ is of the form $\lambda_0-\sum_{i=1}^ln_i\alpha_i$ where $n_i\in\mathbb Z_{\geq0}$ and $\alpha_i\in\Pi$.
\end{enumerate}
Note that by our choices of $\Phi$ and $\Psi$, we have $$\{\alpha|_{\mathbf T}:\alpha\in\Psi,\alpha|_{\mathbf T}\neq 0\}=\{\alpha:\alpha\in\Phi\}.$$ On the other hand, one can also write $V$ as a direct sum of weight spaces relative to $\mathbf T$ as follows $$V=\bigoplus_{\beta}V_\beta.$$ Let $\beta_0$ be the weight in the decomposition above such that its weight space $V_{\beta_0}$ contains $V_{\lambda_0}$. Note that $\beta_0$ is defined over $\mathbb Q$ and $\beta_0=\lambda_0|_{\mathbf T}$. In particular, $\lambda_0(\mathbf T(\mathbb R))\subset\mathbb R$ and $\beta_0$ is the highest weight among $\beta$'s relative to $\mathbf T$ where the order is determined by the root system $(\Phi,\Phi^+,\Delta)$.

Since the stable subgroup of $\{a_t\}_{t\in\mathbb R}$ is contained in $R_u(\mathbf P_0)$, we have $\alpha(a_t)\leq0$ $(t>0)$ for any $\alpha\in\Phi^{+}$. From property (3) of the representation $\rho$ above and the fact that $\textup{Im}\rho\subset\SL(V)$, one can deduce that $\beta_0(a_t)=\lambda_0(a_t)\leq0$ $(t>0)$. If $\beta_0(a_t)=\lambda_0(a_t)=0$ $(t>0)$, then by the fact that $\textup{Im}\rho\subset\SL(V)$ and $\lambda_0$ is the highest weight relative to $\mathbf S$, for any other weight $\lambda$ in $\rho$ relative to $\mathbf S$, we also have $\lambda(a_t)=0$ $(t>0)$. This implies that $\{a_t\}_{t\in\mathbb R}\subset\ker\rho$, which contradicts the assumption that $\dim\ker\rho=0$. Therefore we have $\beta_0(a_t)=\lambda_0(a_t)<0$ $(t>0)$. Note that $$\rho(a_t)\cdot v=e^{\beta_0(a_t)}\cdot v\to 0\textup{ as }t\to\infty$$ for any $v\in V_{\beta_0}$ (here $\beta_0(a_{-1})>0$ is the fastest contracting rate under the action of $\{a_t\}_{t\in\mathbb R}$).

Let $\mathbf P_{\beta_0}$ be the stabilizer of the weight space $V_{\beta_0}$ in $\mathbf G$. Then $\mathbf P_{\beta_0}$ contains the minimal parabolic $\mathbb Q$-subgroup $\mathbf P_0$ since $\beta_0$ is the highest weight among the weights $\beta$'s relative to $\mathbf T$ in $\rho$. Therefore, $\mathbf P_{\beta_0}$ is a parabolic $\mathbb Q$-subgroup in $\mathbf G$. It is known that there exists a $\mathbb Q$-torus $\mathbf T_{\beta_0}$ in $\mathbf T$ such that $Z(\mathbf T_{\beta_0})$ (the centralizer of $\mathbf T_{\beta_0}$ in $\mathbf G$) is a Levi subgroup of $\mathbf P_{\beta_0}$. We denote by $\overline{\mathbf P}_{\beta_0}$ the opposite parabolic $\mathbb Q$-subgroup of $\mathbf P_{\beta_0}$ containing $\overline{\mathbf P}_0$ with the same Levi subgroup $Z(\mathbf T_{\beta_0})$, and denote by $R_u(\overline{\mathbf P}_{\beta_0})$ the unipotent radical of $\overline{\mathbf P}_{\beta_0}$.

Let $N(\mathbf T)$ and $Z(\mathbf T)$ be the normalizer and centralizer of $\mathbf T$ in $\mathbf G$ respectively. Then the Weyl group relative to $\mathbb Q$ is defined by $$_{\mathbb Q}{W}=N(\mathbf T)/Z(\mathbf T).$$ Let $\mathbf P_0$ be the minimal parabolic $\mathbb Q$-subgroup of $\mathbf G$ as defined above, and ${_{\mathbb Q}\mathcal W}$ a set of representatives of $_{\mathbb Q}{W}$ in $N(\mathbf T)(\mathbb Q)$. Then the Bruhat decomposition of $\mathbf G$ is the following \cite[Theorem 21.15]{B91} $$\mathbf G(\mathbb Q)=\mathbf P_0(\mathbb Q)\cdot {_{\mathbb Q}{W}}\cdot\mathbf P_0(\mathbb Q).$$ Note that the Weyl group $_{\mathbb Q}{W}$ acts transitively on the set of minimal parabolic $\mathbb Q$-subgroups containing $Z(\mathbf T)$ \cite[Corollary 21.3]{B91}. So there exists an element in $_{\mathbb Q}{W}$ which sends $\mathbf P_0$ to $\overline{\mathbf P}_0$, and we let $\bar w$ be the representative of this element in ${_{\mathbb Q}\mathcal W}$. Then we have $$\mathbf G(\mathbb Q)=\mathbf G(\mathbb Q)\cdot\bar w^{-1}=\mathbf P_0(\mathbb Q)\cdot{_{\mathbb Q}\mathcal W}\cdot(\mathbf P_0(\mathbb Q)\cdot\bar w^{-1})=\mathbf P_0(\mathbb Q)\cdot{_{\mathbb Q}\mathcal W}\cdot\overline{\mathbf P}_0(\mathbb Q)$$ which implies that $$\quad\mathbf G(\mathbb Q)=\mathbf P_0(\mathbb Q)\cdot{_{\mathbb Q}\mathcal W}\cdot R_u(\overline{\mathbf P}_0)(\mathbb Q).$$ For each $w\in{_{\mathbb Q}\mathcal W}$, define $$\mathbf F_{w}=R_u(\overline{\mathbf P}_0)\cap w^{-1}R_u(\overline{\mathbf P}_{\beta_0})w,\quad\mathbf H_{w}=R_u(\overline{\mathbf P}_0)\cap w^{-1}\mathbf P_{\beta_0}w$$ and by \cite[Proposition 21.9]{B91}, we have $$R_u(\overline{\mathbf P}_0)=\mathbf H_{w}\cdot\mathbf F_{w}.$$ We may assume that $e\in{_{\mathbb Q}\mathcal W}$. It follows that 
\begin{align*}
\mathbf G(\mathbb Q)=&\bigcup_{w\in{_{\mathbb Q}\mathcal W}} \mathbf P_0(\mathbb Q)\cdot(w\mathbf H_{w}w^{-1})(\mathbb Q)\cdot w\cdot\mathbf F_{w}(\mathbb Q)\\
=&\bigcup_{w\in{_{\mathbb Q}\mathcal W}}\mathbf P_{\beta_0}(\mathbb Q)\cdot w\cdot\mathbf F_{w}(\mathbb Q).
\end{align*}
Here, as discussed in \S\ref{results}, the subsets $\mathbf P_{\beta_0}(\mathbb Q)\cdot w\cdot\mathbf F_{w}(\mathbb Q)$ $(w\in{_{\mathbb Q}\mathcal W})$ may overlap, and we choose any subset ${_{\mathbb Q}\overline{\mathcal W}}$ in ${_{\mathbb Q}\mathcal W}$ (as small as possible) such that $$\mathbf G(\mathbb Q)=\bigcup_{w\in{_{\mathbb Q}\overline{\mathcal W}}}\mathbf P_{\beta_0}(\mathbb Q)\cdot w\cdot\mathbf F_{w}(\mathbb Q).$$ Note that, by \cite[Proposition 21.9]{B91}, the group $w\mathbf H_{w}w^{-1}\subset\mathbf P_{\beta_0}$ is generated by unipotent subgroups whose Lie algebras are sums of root spaces of positive $\mathbb Q$-roots in $\mathfrak g$ relative to $\mathbf T$, and $\beta_0$ is the highest weight in $\rho$. Hence $w\mathbf H_{w}w^{-1}$ fixes every element in $V_{\beta_0}$ by the structure of the representation $\rho$.

Now we prove Theorem~\ref{range}.
\begin{proof}[Proof of Theorem~\ref{range}]
Since $\psi:\mathbb R_+\to\mathbb R_+$ is bounded, by definition $$\tau(\psi)=
\liminf_{t\to\infty}\left(-\frac{\ln(\psi(t))}t\right)\geq0.$$ Note that we have already proved $\beta_0(a_{-1})>0$. Now suppose that $\tau=\tau(\psi)>\beta_0(a_{-1})$. Then for any sufficiently small $\epsilon>0$, there exists $C_\epsilon>0$ such that $$\psi(t)\leq C_\epsilon\cdot e^{-(\tau-\epsilon)t}\quad(\forall t>0).$$ Let $g\in\mathbf G(\mathbb R)$. Since $\beta_0(a_{-1})>0$ is the fastest contracting rate of the $\{a_t\}_{t\in\mathbb R}$-action on $V$, we have $$\|\rho(a_t)v\|\geq e^{-\beta_0(a_{-1})t}\|v\|\quad(\forall v\in\rho(g)\cdot\mathbb Z^d\setminus\{0\})$$ and $$\delta(\rho(a_t\cdot g)\cdot\mathbb Z^d)\geq e^{-\beta_0(a_{-1})t}\cdot\delta(\rho(g)\cdot\mathbb Z^d).$$ Since $\tau>\beta_0(a_{-1})$, by choosing $\epsilon$ sufficiently small, one may find a constant $C>0$ such that $$\delta(\rho(a_t\cdot g)\cdot\mathbb Z^d)\geq C\cdot\psi(t)\quad(t>0).$$ which shows that $g\in S_\rho(\psi)$ and $S_\rho(\psi)=\mathbf G(\mathbb R)$. Note that this argument also works for $\tau=\infty$. This completes the proof of Theorem~\ref{range}.
\end{proof}

In the rest of the paper, we assume that $0\leq\tau(\psi)\leq\beta_0(a_{-1})$. In the following, we define the notion of rational elements and discuss some related properties.

\begin{definition}\label{def21}
An element $g\in\mathbf G(\mathbb R)$ is called rational if $\rho(g)\mathbb Z^d\cap V_{\beta_0}$ is Zariski dense in $V_{\beta_0}$, or equivalently, if $\rho(g)\mathbb Z^d\cap V_{\beta_0}$ is a lattice in the vector space of real points in $V_{\beta_0}$. 
\end{definition}

\begin{lemma}\label{l22}
An element $g\in R_u(\overline{\mathbf P}_{\beta_0})(\mathbb R)$ is rational if and only if $g\in R_u(\overline{\mathbf P}_{\beta_0})(\mathbb Q)$.
\end{lemma}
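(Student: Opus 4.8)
The plan is to prove both directions by analyzing how $\rho(g)$ acts on $V_{\beta_0}$ when $g$ lies in the unipotent radical $R_u(\overline{\mathbf P}_{\beta_0})$. The easy direction is ``$\Leftarrow$'': if $g\in R_u(\overline{\mathbf P}_{\beta_0})(\mathbb Q)$, then $\rho(g)$ is a $\mathbb Q$-rational automorphism of $V$ preserving the $\mathbb Q$-structure, so $\rho(g)\cdot\mathbb Z^d$ is a lattice in $V$ compatible with the $\mathbb Q$-structure; intersecting with the $\mathbb Q$-rational subspace $V_{\beta_0}$ gives a full-rank lattice in $V_{\beta_0}$, which is Zariski dense. (One should note here that $V_{\beta_0}$ is defined over $\mathbb Q$, as recorded in \S\ref{pre}, and that $\rho$ being defined over $\mathbb Q$ means $\rho(\mathbf G(\mathbb Q))$ preserves the $\mathbb Q$-structure.)

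For the forward direction ``$\Rightarrow$'', the key structural input is that $\overline{\mathbf P}_{\beta_0}$ is the stabilizer of the line $\bigwedge^{\dim V_{\beta_0}}V_{\beta_0}$ inside $\bigwedge^{\dim V_{\beta_0}}V$ (dually to $\mathbf P_{\beta_0}$ stabilizing $V_{\beta_0}$), and more precisely that for $g\in R_u(\overline{\mathbf P}_{\beta_0})(\mathbb R)$ the map $\Psi(g)=\rho_{\beta_0}(g)\cdot e_{V_{\beta_0}}$ is a polynomial isomorphism of $R_u(\overline{\mathbf P}_{\beta_0})$ onto its image, with polynomial entries. I would argue that if $g$ is rational, i.e. $\rho(g)\cdot\mathbb Z^d\cap V_{\beta_0}$ is Zariski dense in $V_{\beta_0}$, then the subspace $\rho(g)\cdot V_{\beta_0} = $ (the $\mathbb C$-span of that intersection) is defined over $\mathbb Q$; hence the line $\rho_{\beta_0}(g)\cdot (\bigwedge^{\dim V_{\beta_0}}V_{\beta_0})$ in $\bigwedge^{\dim V_{\beta_0}}V$ is a $\mathbb Q$-rational line, so $\rho_{\beta_0}(g)\cdot e_{V_{\beta_0}}$ is a scalar multiple of a rational vector. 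Since $g\in R_u(\overline{\mathbf P}_{\beta_0})$, the highest-weight line is sent by $\rho_{\beta_0}(g)$ to a vector whose $e_{V_{\beta_0}}$-component is exactly $e_{V_{\beta_0}}$ (the unipotent radical acts trivially on the highest weight space modulo lower weight spaces), which pins the scalar to $1$; thus $\rho_{\beta_0}(g)\cdot e_{V_{\beta_0}}\in\bigwedge^{\dim V_{\beta_0}}\mathbb Z^d \otimes\mathbb Q$. Combined with the fact that $\Psi$ is a $\mathbb Q$-defined isomorphism onto its image, this forces $g\in R_u(\overline{\mathbf P}_{\beta_0})(\mathbb Q)$.

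The step I expect to be the main obstacle is the claim that Zariski density of the \emph{lattice points} $\rho(g)\cdot\mathbb Z^d\cap V_{\beta_0}$ in $V_{\beta_0}$ genuinely forces the subspace $\rho(g)\cdot V_{\beta_0}$ (equivalently the point $[\rho_{\beta_0}(g)\cdot e_{V_{\beta_0}}]$ in the relevant Grassmannian/projective space) to be \emph{rational} — Zariski density of an integral set in a $\mathbb C$-subspace $W$ only immediately shows that $W$ is spanned over $\mathbb C$ by vectors of the subgroup $\rho(g)\cdot\mathbb Z^d$, and one must upgrade this to rationality of $W$. The clean way to do this is: $\rho(g)\cdot\mathbb Z^d$ is a finitely generated subgroup of $V$, so its $\mathbb Q$-span $W_{\mathbb Q}:=\mathbb Q\text{-span}(\rho(g)\cdot\mathbb Z^d\cap V_{\beta_0})$ is a $\mathbb Q$-rational subspace whose complexification is contained in $V_{\beta_0}$; Zariski density then gives $W_{\mathbb Q}\otimes\mathbb C = V_{\beta_0}$, so $V_{\beta_0}$ has a $\mathbb Q$-basis drawn from $\rho(g)\cdot\mathbb Z^d$, and therefore $\rho_{\beta_0}(g)\cdot e_{V_{\beta_0}}$ is proportional to the wedge of such a basis, which lies in $\bigwedge^{\dim V_{\beta_0}}\mathbb Z^d$. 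After that, the normalization of the scalar via the unipotent action and the rigidity of the $\mathbb Q$-morphism $\Psi$ are routine, using Lemma-level facts about unipotent groups being $\mathbb Q$-isomorphic (as varieties) to affine space. I would also double-check the edge case $\dim V_{\beta_0}=\dim V$ (where $\overline{\mathbf P}_{\beta_0}=\mathbf G$ and the statement degenerates), though under the standing hypotheses $\beta_0$ is a proper highest weight so $V_{\beta_0}\neq V$ unless the representation is trivial, which is excluded by $\dim\ker\rho=0$ together with $\mathbf G$ being $\mathbb Q$-isotropic.
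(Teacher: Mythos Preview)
Your overall strategy for the forward direction---show that a certain subspace is $\mathbb Q$-rational in the Grassmannian, normalize the scalar via the unipotent action, then invert the $\mathbb Q$-isomorphism $\Psi$---is sound, but as written you have the direction of $g$ reversed, and this makes several intermediate assertions false.

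The set $\rho(g)\cdot\mathbb Z^d\cap V_{\beta_0}$ sits inside $V_{\beta_0}$, so its $\mathbb C$-span is $V_{\beta_0}$ itself, not $\rho(g)\cdot V_{\beta_0}$; these two subspaces coincide only when $g\in\mathbf P_{\beta_0}$. More seriously, vectors in $\rho(g)\cdot\mathbb Z^d$ need not lie in $\mathbb Q^d$ (you have only assumed $g$ real), so your $W_{\mathbb Q}$ is not a $\mathbb Q$-rational subspace in the fixed $\mathbb Q$-structure of $V$. Concretely: if $v_1,\dots,v_k$ is a basis of $V_{\beta_0}$ drawn from $\rho(g)\cdot\mathbb Z^d$, then $v_1\wedge\cdots\wedge v_k$ is proportional to $e_{V_{\beta_0}}$ (not to $\rho_{\beta_0}(g)\cdot e_{V_{\beta_0}}$) and lies in $\rho_{\beta_0}(g)\cdot\bigwedge^k\mathbb Z^d$ (not in $\bigwedge^k\mathbb Z^d$). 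The correct conclusion is therefore that $\rho_{\beta_0}(g^{-1})\cdot e_{V_{\beta_0}}$ is proportional to an integral Pl\"ucker vector; equivalently, the $\mathbb Q$-rational subspace is $\rho(g)^{-1}V_{\beta_0}$, which is the $\mathbb C$-span of the genuinely integral set $\mathbb Z^d\cap\rho(g)^{-1}V_{\beta_0}$. Since $g^{-1}\in R_u(\overline{\mathbf P}_{\beta_0})$ as well, your normalization and $\Psi$-inversion steps then go through with $g^{-1}$ in place of $g$, yielding $g^{-1}\in R_u(\overline{\mathbf P}_{\beta_0})(\mathbb Q)$ and hence $g\in R_u(\overline{\mathbf P}_{\beta_0})(\mathbb Q)$.

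The paper takes a shorter route that bypasses the Pl\"ucker embedding entirely: for any $\sigma\in\Gal(\mathbb R/\mathbb Q)$, applying $\sigma$ to the inclusion $\rho(g)\cdot\Lambda_g\subset V_{\beta_0}$ (with $\Lambda_g\subset\mathbb Z^d$ fixed by $\sigma$ and $V_{\beta_0}$ defined over $\mathbb Q$) gives $\rho(\sigma(g))\cdot\Lambda_g\subset V_{\beta_0}$, whence $\sigma(g)g^{-1}$ stabilizes $V_{\beta_0}$ and so lies in $\mathbf P_{\beta_0}\cap R_u(\overline{\mathbf P}_{\beta_0})=\{e\}$. This Galois argument avoids appealing to the fact that a unipotent orbit map is a $\mathbb Q$-isomorphism onto a closed image; your (corrected) approach, on the other hand, has the advantage of making explicit the link to the maps $\Psi_{w_i}$ used later in \S\ref{ubd}.
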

\begin{proof}
Let $g\in R_u(\overline{\mathbf P}_{\beta_0})(\mathbb R)$ be a rational element in $\mathbf G(\mathbb R)$. Then by definition, there exists a discrete subgroup $\Lambda_g\subset\mathbb Z^d$ such that $\rho(g)\cdot\Lambda_g$ is a lattice in $V_{\beta_0}(\mathbb R)$. Choose $\sigma\in\Gal(\mathbb R/\mathbb Q)$. Since $V_{\beta_0}$ is defined over $\mathbb Q$, we have $$\sigma(\rho(g)\cdot\Lambda_g)=\rho(\sigma(g))\cdot\Lambda_g\subset\sigma(V_{\beta_0}(\mathbb R))=V_{\beta_0}(\mathbb R)$$ and $\rho(\sigma(g))\cdot\Lambda_g$ is Zariski dense in $V_{\beta_0}$. Then we obtain that $$\rho(\sigma(g)g^{-1})V_{\beta_0}=\rho(\sigma(g)g^{-1})\overline{\rho(g)\cdot\Lambda_g}=\overline{\rho(\sigma(g))\cdot\Lambda_g}=V_{\beta_0}$$ and $\sigma(g)g^{-1}$ is in the stabilizer $\mathbf P_{\beta_0}$ of $V_{\beta_0}$. Here $\overline{\rho(g)\cdot\Lambda_g}$ and $\overline{\rho(\sigma(g))\cdot\Lambda_g}$ denote the Zariski closures of $\rho(g)\cdot\Lambda_g$ and $\rho(\sigma(g))\cdot\Lambda_g$ respectively. On the other hand, $\sigma(g)g^{-1}\in R_u(\overline{\mathbf P}_{\beta_0})$ and $R_u(\overline{\mathbf P}_{\beta_0})\cap\mathbf P_{\beta_0}=\{e\}$. Therefore, $\sigma(g)=g$ for any $\sigma\in\Gal(\mathbb R/\mathbb Q)$, and $g\in R_u(\overline{\mathbf P}_{\beta_0})(\mathbb Q)$. The other direction is clear.
\end{proof}

\begin{corollary}\label{c23}
Let $w\in{_{\mathbb Q}\mathcal W}$ and $g\in R_u(\overline{\mathbf P}_0)(\mathbb R)$. Then $w\cdot g$ is rational if and only if $g\in\mathbf H_{w}(\mathbb R)\cdot\mathbf F_{w}(\mathbb Q)$.
\end{corollary}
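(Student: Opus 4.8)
The plan is to strip $w_i g$ down, using only two elementary invariance properties of the notion \emph{rational} from Definition~\ref{def21}, until Lemma~\ref{l22} applies on the nose. The first property is invariance under right translation by $\mathbf G(\mathbb Q)$: if $x\in\mathbf G(\mathbb R)$ is rational and $q\in\mathbf G(\mathbb Q)$, then $xq$ is rational. To see this, note $\rho(q)\mathbb Z^d$ is a full lattice in $\mathbb Q^d$, hence commensurable with $\mathbb Z^d$, so $N\mathbb Z^d\subseteq\rho(q)\mathbb Z^d$ for some integer $N\ge 1$; thus $\rho(xq)\mathbb Z^d\cap V_{\beta_0}\supseteq N\bigl(\rho(x)\mathbb Z^d\cap V_{\beta_0}\bigr)$, and the right-hand side is Zariski dense in $V_{\beta_0}$ because multiplication by $N$ is a linear automorphism of $V_{\beta_0}$ and carries a Zariski dense set to a Zariski dense set; applying this to $xq$ and $q^{-1}$ gives the converse. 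The second property is that left translation by $\mathbf P_{\beta_0}(\mathbb R)$ is also harmless: for $p\in\mathbf P_{\beta_0}(\mathbb R)$ one has $\rho(px)\mathbb Z^d\cap V_{\beta_0}=\rho(p)\bigl(\rho(x)\mathbb Z^d\cap V_{\beta_0}\bigr)$ since $\rho(p)$ stabilizes $V_{\beta_0}$, and $\rho(p)|_{V_{\beta_0}}$ is a linear automorphism, so $px$ is rational iff $x$ is (and for the $p$ occurring below, $\rho(p)$ in fact acts as the identity on $V_{\beta_0}$ because $w_i\mathbf H_{w_i}w_i^{-1}$ fixes $V_{\beta_0}$ pointwise, so this is literally an equality of sets).

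With these in hand, given $g\in R_u(\overline{\mathbf P}_0)(\mathbb R)$ I would invoke the decomposition $R_u(\overline{\mathbf P}_0)=\mathbf H_{w_i}\cdot\mathbf F_{w_i}$, whose product morphism $\mathbf H_{w_i}\times\mathbf F_{w_i}\to R_u(\overline{\mathbf P}_0)$ is an isomorphism of $\mathbb Q$-varieties (in particular a bijection on real points), to factor $g=hf$ uniquely with $h\in\mathbf H_{w_i}(\mathbb R)$ and $f\in\mathbf F_{w_i}(\mathbb R)$. Conjugating, $w_i g w_i^{-1}=(w_ihw_i^{-1})(w_ifw_i^{-1})=:p\,y$, where $p=w_ihw_i^{-1}\in\mathbf P_{\beta_0}(\mathbb R)$ and $y=w_ifw_i^{-1}\in(w_i\mathbf F_{w_i}w_i^{-1})(\mathbb R)\subseteq R_u(\overline{\mathbf P}_{\beta_0})(\mathbb R)$. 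Combining the two invariance properties (with $q=w_i^{\pm1}$ and with this $p$) gives the chain $w_ig$ rational $\iff w_igw_i^{-1}=py$ rational $\iff y$ rational.

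To finish, since $y\in R_u(\overline{\mathbf P}_{\beta_0})(\mathbb R)$, Lemma~\ref{l22} says $y$ is rational iff $y\in R_u(\overline{\mathbf P}_{\beta_0})(\mathbb Q)$; and because $w_i\mathbf F_{w_i}w_i^{-1}$ is a $\mathbb Q$-subgroup of $R_u(\overline{\mathbf P}_{\beta_0})$ and $w_i\in\mathbf G(\mathbb Q)$, this last condition is equivalent to $y\in(w_i\mathbf F_{w_i}w_i^{-1})(\mathbb Q)$, i.e.\ to $f\in\mathbf F_{w_i}(\mathbb Q)$. Hence $w_ig$ is rational iff $f\in\mathbf F_{w_i}(\mathbb Q)$, which by uniqueness of the factorization $g=hf$ means precisely $g\in\mathbf H_{w_i}(\mathbb R)\cdot\mathbf F_{w_i}(\mathbb Q)$, as claimed.

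I expect no serious obstacle here: the statement is essentially a change of variables fed into Lemma~\ref{l22}. The one point requiring a little care is the commensurability input in the first invariance property — this is where the rationality of the Weyl representative $w_i$ and the fact that $\rho$, $V_{\beta_0}$, and the unipotent subgroups $\mathbf H_{w_i},\mathbf F_{w_i}$ are all defined over $\mathbb Q$ genuinely enter — together with the bookkeeping that conjugation by $w_i$ identifies $\mathbf F_{w_i}(\mathbb Q)$ with $(w_i\mathbf F_{w_i}w_i^{-1})(\mathbb Q)$ and does not escape the $\mathbb Q$-points.
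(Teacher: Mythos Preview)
Your proof is correct and follows essentially the same approach as the paper's: factor $g=hf$ via $R_u(\overline{\mathbf P}_0)=\mathbf H_{w_i}\cdot\mathbf F_{w_i}$, conjugate by $w_i$, strip off the $\mathbf P_{\beta_0}$-part $w_ihw_i^{-1}$, and apply Lemma~\ref{l22} to $w_ifw_i^{-1}\in R_u(\overline{\mathbf P}_{\beta_0})(\mathbb R)$. You are simply more explicit than the paper about why right translation by $\mathbf G(\mathbb Q)$ and left translation by $\mathbf P_{\beta_0}(\mathbb R)$ preserve rationality, which the paper dispatches with ``by definition''.
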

\begin{proof}
Let $h\in\mathbf H_{w}(\mathbb R)$ and $f\in\mathbf F_{w}(\mathbb R)$ such that $g=h\cdot f$. Suppose that $w\cdot g$ is rational. Since $w\in\mathbf G(\mathbb Q)$, by definition, $w\cdot g\cdot w^{-1}=w\cdot h\cdot f\cdot w^{-1}$ is also rational in $\mathbf G(\mathbb R)$. Since $w\cdot h\cdot w^{-1}$ preserves $V_{\beta_0}$, we get that $w\cdot f\cdot w^{-1}$ is rational. By the fact that $w\mathbf F_{w} w^{-1}\subset R_u(\overline{\mathbf P}_{\beta_0})$ and Lemma~\ref{l22}, we conclude that $w\cdot f\cdot w^{-1}\in R_u(\overline{\mathbf P}_{\beta_0})(\mathbb Q)$ and $f\in\mathbf F_{w}(\mathbb Q)$.

Conversely, suppose that $g=h\cdot f$ where $h\in\mathbf H_{w}(\mathbb R)$ and $f\in\mathbf F_{w}(\mathbb Q)$. Then $$w\cdot g=(whw^{-1})\cdot (wfw^{-1})\cdot w.$$ Note that $whw^{-1}\in\mathbf P_{\beta_0}$ and $w\in\mathbf G(\mathbb Q)$. By definition, we know that $w\cdot g$ is rational. This completes the proof of the corollary.
\end{proof}

By properties of the subset $\mathcal K\subset\mathbf G(\mathbb Q)$ in the reduction theory, we can give another characterization of rational elements in $R_u(\overline{\mathbf P}_0)(\mathbb R)$.

\begin{lemma}\label{l24}
Let $w\in{_{\mathbb Q}\mathcal W}$ and $g\in R_u(\overline{\mathbf P}_0)(\mathbb R)$. Then $w\cdot g$ is rational if and only if $$g\in R_u(\overline{\mathbf P}_0)(\mathbb R)\cap(\mathbf H_{w}(\mathbb R)\cdot \mathbf P_0(\mathbb R)\cdot \mathcal K\cdot\Gamma).$$
\end{lemma}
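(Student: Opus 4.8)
The plan is to bootstrap from Corollary~\ref{c23}, which already identifies the rational translates $w_i\cdot g$ (with $g\in R_u(\overline{\mathbf P}_0)(\mathbb R)$) as exactly those with $g\in\mathbf H_{w_i}(\mathbb R)\cdot\mathbf F_{w_i}(\mathbb Q)$, and to re-express the condition $f_i\in\mathbf F_{w_i}(\mathbb Q)$ purely in terms of the reduction-theoretic finite set $\mathcal K$ and the arithmetic group $\Gamma$. First I would prove the easy ($\Leftarrow$) direction: suppose $g=h\cdot p\cdot x\cdot\gamma$ with $h\in\mathbf H_{w_i}(\mathbb R)$, $p\in\mathbf P_0(\mathbb R)$, $x\in\mathcal K$, $\gamma\in\Gamma$, and also $g\in R_u(\overline{\mathbf P}_0)(\mathbb R)$. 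Since $x\in\mathbf G(\mathbb Q)$ and $\rho(\gamma)$ preserves $\mathbb Z^d$, the set $\rho(w_i g)\cdot\mathbb Z^d\cap V_{\beta_0}$ equals $\rho(w_i h p x)\cdot\mathbb Z^d\cap V_{\beta_0}$ up to the $\Gamma$-action, so $w_i g$ is rational iff $w_i h p x$ is; and $w_i h w_i^{-1}\in\mathbf P_{\beta_0}$, $w_i\in\mathbf G(\mathbb Q)$, $x\in\mathbf G(\mathbb Q)$ force $p\in\mathbf P_0(\mathbb R)$ to act by a rational matrix on $V_{\beta_0}$ up to the $\mathbf P_{\beta_0}$-stabilizer — more precisely, since $\mathbf P_0\subset\mathbf P_{\beta_0}$, the element $w_i p$ already sends $V_{\beta_0}$ to a $\mathbb Q$-subspace in a way compatible with $\mathbb Z^d$, so rationality follows directly from Definition~\ref{def21}. (One should phrase this so that the role of $p\in\mathbf P_0$ is simply that $\mathbf P_0\subseteq\mathbf P_{\beta_0}$ stabilizes $V_{\beta_0}$.)

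For the harder ($\Rightarrow$) direction, suppose $w_i\cdot g$ is rational. By Corollary~\ref{c23} we may write $g=h_i\cdot f_i$ with $h_i\in\mathbf H_{w_i}(\mathbb R)$ and $f_i\in\mathbf F_{w_i}(\mathbb Q)\subset\mathbf G(\mathbb Q)$. The key point is the reduction-theory identity from the preliminaries, $\mathbf G(\mathbb Q)=\mathbf P_0(\mathbb Q)\cdot\mathcal K\cdot\Gamma$ — actually I need the version with $\mathbf G(\mathbb R)$ decomposed via the Siegel set, but for placing a rational element it is the $\mathbb Q$-points version that matters: write $f_i=q\cdot x\cdot\gamma$ with $q\in\mathbf P_0(\mathbb Q)\subseteq\mathbf P_0(\mathbb R)$, $x\in\mathcal K$, $\gamma\in\Gamma$. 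Then $g=h_i\cdot q\cdot x\cdot\gamma\in\mathbf H_{w_i}(\mathbb R)\cdot\mathbf P_0(\mathbb R)\cdot\mathcal K\cdot\Gamma$, and since by hypothesis $g\in R_u(\overline{\mathbf P}_0)(\mathbb R)$ we land exactly in the asserted set $R_u(\overline{\mathbf P}_0)(\mathbb R)\cap\bigl(\mathbf H_{w_i}(\mathbb R)\cdot\mathbf P_0(\mathbb R)\cdot\mathcal K\cdot\Gamma\bigr)$.

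The main obstacle I anticipate is bookkeeping rather than depth: one must be careful that the finite set $\mathcal K$ used here is the \emph{same} $\mathcal K$ from the reduction-theory statement $\mathbf G(\mathbb Q)=\mathbf P_0(\mathbb Q)\cdot\mathcal K\cdot\Gamma$ with $e\in\mathcal K$, and that the containment $\mathbf F_{w_i}(\mathbb Q)\subseteq\mathbf G(\mathbb Q)$ really lets us invoke that factorization (it does, since $\mathbf F_{w_i}$ is defined over $\mathbb Q$). A secondary subtlety is checking in the ($\Leftarrow$) direction that inserting an arbitrary $p\in\mathbf P_0(\mathbb R)$ — not just $\mathbf P_0(\mathbb Q)$ — does not destroy rationality; this is fine precisely because $\mathbf P_0\subseteq\mathbf P_{\beta_0}=\Stab(V_{\beta_0})$, so $\rho(p)$ maps $V_{\beta_0}$ into itself and the Zariski density of $\rho(w_i h p x\gamma)\mathbb Z^d\cap V_{\beta_0}$ is inherited from that of $\rho(x\gamma)\mathbb Z^d$ intersected with the ($\mathbb Q$-rational) preimage subspace, exactly as in the proof of Corollary~\ref{c23}. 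Once these identifications are pinned down the proof is a two-line manipulation in each direction.
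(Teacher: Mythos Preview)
Your forward ($\Rightarrow$) direction is correct and is exactly what the paper does: apply Corollary~\ref{c23} to get $g=h_i f_i$ with $f_i\in\mathbf F_{w_i}(\mathbb Q)\subset\mathbf G(\mathbb Q)=\mathbf P_0(\mathbb Q)\cdot\mathcal K\cdot\Gamma$, hence $g\in\mathbf H_{w_i}(\mathbb R)\cdot\mathbf P_0(\mathbb R)\cdot\mathcal K\cdot\Gamma$.

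The gap is in your backward ($\Leftarrow$) direction, which you label ``easy'' but which is in fact where the content lies. Your argument reduces rationality of $w_i g$ to Zariski density of $\rho(x\gamma)\mathbb Z^d$ intersected with the preimage subspace $\rho((w_ihp)^{-1})V_{\beta_0}$, and you assert this subspace is $\mathbb Q$-rational. It is not. Since $w_ihw_i^{-1}$ fixes $V_{\beta_0}$ pointwise, that preimage equals $\rho(p^{-1})\rho(w_i^{-1})V_{\beta_0}$. The space $\rho(w_i^{-1})V_{\beta_0}$ sits in the weight space for $w_i^{-1}\cdot\beta_0$, which for nontrivial $w_i$ is \emph{not} the highest weight; hence $R_u(\mathbf P_0)$ (whose root spaces raise weights) does not fix it, and for generic real $p\in\mathbf P_0(\mathbb R)$ the image $\rho(p^{-1})\rho(w_i^{-1})V_{\beta_0}$ is an irrational subspace. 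Intersecting a rational lattice with an irrational subspace can give $\{0\}$. Concretely, in $\SL_2$ with the standard representation and $w_1=\begin{pmatrix}0&1\\-1&0\end{pmatrix}$, one checks directly that $w_1 p$ is rational only when a rationality condition on the entries of $p$ holds---so the fact that $p\in\mathbf P_0\subset\mathbf P_{\beta_0}$ stabilizes $V_{\beta_0}$ is by itself insufficient. Your argument never uses the hypothesis $g\in R_u(\overline{\mathbf P}_0)(\mathbb R)$, and without it the conclusion is false.

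The paper's proof of this direction uses that hypothesis essentially. One first writes $g=h\cdot f$ via the decomposition $R_u(\overline{\mathbf P}_0)=\mathbf H_{w_i}\cdot\mathbf F_{w_i}$, so that $f\in\mathbf F_{w_i}(\mathbb R)$ also lies in $\mathbf H_{w_i}(\mathbb R)\cdot\mathbf P_0(\mathbb R)\cdot\mathcal K\cdot\Gamma$; writing $f=h_i\cdot p\cdot x\cdot\gamma$ and applying any $\sigma\in\Gal(\mathbb R/\mathbb Q)$ gives $\sigma(f)=\sigma(h_i)\sigma(p)\,x\,\gamma$, hence $x\gamma=p^{-1}h_i^{-1}f=\sigma(p)^{-1}\sigma(h_i)^{-1}\sigma(f)$. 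The injectivity of the product map $\mathbf P_0\times\mathbf H_{w_i}\times\mathbf F_{w_i}\to\mathbf G$ then forces $\sigma(f)=f$, i.e.\ $f\in\mathbf F_{w_i}(\mathbb Q)$, and Corollary~\ref{c23} finishes. This Galois descent step, exploiting the unique factorization coming from $g\in R_u(\overline{\mathbf P}_0)$, is the missing idea in your plan.
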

\begin{proof}
If $w\cdot g$ is rational, then by Corollary~\ref{c23}, we have $$g\in\mathbf H_{w}(\mathbb R)\cdot\mathbf F_{w}(\mathbb Q)\subset\mathbf H_{w}(\mathbb R)\cdot\mathbf G(\mathbb Q)=\mathbf H_{w}(\mathbb R)\cdot\mathbf P_0(\mathbb Q)\cdot \mathcal K\cdot\Gamma\subset\mathbf H_{w}(\mathbb R)\cdot\mathbf P_0(\mathbb R)\cdot \mathcal K\cdot\Gamma.$$ Now suppose that $g\in R_u(\overline{\mathbf P}_0)(\mathbb R)\cap(\mathbf H_{w}(\mathbb R)\cdot \mathbf P_0(\mathbb R)\cdot \mathcal K\cdot\Gamma)$. Then there exist $h\in\mathbf H_{w}(\mathbb R)$ and $f\in\mathbf F_{w}(\mathbb R)$ such that $$g=h\cdot f,\quad f\in\mathbf H_{w}(\mathbb R)\cdot \mathbf P_0(\mathbb R)\cdot \mathcal K\cdot\Gamma.$$ We write $$f=h\cdot p\cdot x\cdot\gamma$$ for some $h\in\mathbf H_{w}(\mathbb R)$, $p\in\mathbf P_0(\mathbb R)$, $x\in \mathcal K$ and $\gamma\in\Gamma$. Let $\sigma\in\Gal(\mathbb R/\mathbb Q)$. Then we have $$\sigma(f)=\sigma(h)\cdot\sigma(p)\cdot x\cdot\gamma.$$ This implies that $x\cdot\gamma=p^{-1}\cdot h^{-1}\cdot f=\sigma(p)^{-1}\cdot\sigma(h_i)^{-1}\cdot\sigma(f)$. Since the product map $$\mathbf P_0\times\mathbf H_{w}\times\mathbf F_{w}\to\mathbf G$$ is injective, we have $\sigma(f)=f$ for any $\sigma\in\Gal(\mathbb R/\mathbb Q)$ and hence $f\in\mathbf F_{w}(\mathbb Q)$. By Corollary~\ref{c23}, $w\cdot g$ is rational.
\end{proof}

\begin{definition}\label{d25}
Let $g$ be a rational element in $\mathbf G(\mathbb R)$. The height $\height(g)$ of $g$ is defined to be the co-volume of the lattice $\rho(g)\mathbb Z^d\cap V_{\beta_0}$ in the vector space of real points in $V_{\beta_0}$.
\end{definition}

In the following lemma, we discuss the relation between the length of the shortest non-zero vector and the co-volume of the discrete subgroup $\rho(g)\cdot\mathbb Z^n\cap V_{\beta_0}$ for $g\in\mathbf G(\mathbb R)$. For convenience, we write $A\lesssim B$ $(A\gtrsim B)$ if there exists a constant $c>0$ such that $$A\leq c\cdot B\quad(A\geq c\cdot B).$$ If $A\lesssim B$ and $A\gtrsim B$, then we write $A\asymp B$. We will specify the implicit constants in the contexts if necessary.

\begin{lemma}\label{l26}
Let $w\in{_{\mathbb Q}\mathcal W}$ and $g\in R_u(\overline{\mathbf P}_0)(\mathbb R)$. Suppose that $w\cdot g$ is rational. Then $$\height(w\cdot g)\asymp\delta(\rho(w\cdot g)\cdot\mathbb Z^d\cap V_{\beta_0})^{\dim V_{\beta_0}}.$$ Here the implicit constant depends only on $\mathbf G$ and $\Gamma$.
\end{lemma}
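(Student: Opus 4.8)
The plan is to show that the discrete subgroup $\Lambda := \rho(w_i\cdot g)\cdot\mathbb Z^d\cap V_{\beta_0}$ is a lattice of full rank $\dim V_{\beta_0}$ in $V_{\beta_0}$ whose successive minima are all comparable to each other, uniformly over the relevant range of $g$; once this is established, Minkowski's second theorem forces the co-volume and $\delta(\Lambda)^{\dim V_{\beta_0}}$ to be comparable. The reason the successive minima do not spread out is that $\Lambda$ is not an arbitrary lattice in $V_{\beta_0}$: it arises from the highly rigid arithmetic of the weight space, and we can control it by reducing to the compact part of a Siegel set.

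First I would write, using the reduction theory recalled above, $\mathbf G(\mathbb R)=S_{\eta,\Omega}\cdot\mathcal K\cdot\Gamma$ and Lemma~\ref{l24}, to reduce the rational element $w_i\cdot g$ to a standard form. Concretely, since $w_i\cdot g$ is rational and $\rho(\Gamma)$ preserves $\mathbb Z^d$, one may replace $w_i\cdot g$ by an element of the form $k\cdot a\cdot\omega\cdot x$ with $k\in K$, $a\in T_\eta$, $\omega\in\Omega$ relatively compact, and $x\in\mathcal K$ a fixed finite set, without changing $d(w_i\cdot g)$ and changing $\delta(\Lambda)$ by a bounded multiplicative factor (because $\rho(K)$ acts by bounded operators on $V$ and $\rho(\omega),\rho(x)$ range over compact, hence bounded, sets of operators with bounded inverses). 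So it suffices to prove the estimate for $\Lambda_a := \rho(a)\cdot(\rho(x)\mathbb Z^d)\cap V_{\beta_0}$ with $a\in T_\eta$. Now $V_{\beta_0}$ is a weight space, so $\rho(a)$ acts on $V_{\beta_0}$ by the single scalar $e^{\beta_0(a)}$; hence $\Lambda_a = e^{\beta_0(a)}\cdot\bigl(\rho(x)\mathbb Z^d\cap V_{\beta_0}\bigr) = e^{\beta_0(a)}\cdot L_x$ where $L_x := \rho(x)\mathbb Z^d\cap V_{\beta_0}$ is one of finitely many fixed lattices (indexed by $x\in\mathcal K$), each of full rank in $V_{\beta_0}$ and each with a definite gap between its largest and smallest successive minimum. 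Scaling by $e^{\beta_0(a)}$ multiplies every successive minimum by the same factor, so the ratio of the largest to the smallest successive minimum of $\Lambda_a$ equals that of $L_x$, hence is bounded by a constant depending only on $\mathbf G$, $\rho$ and $\Gamma$ (through the finite set $\mathcal K$).

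With the uniform comparability of successive minima in hand, Minkowski's second theorem gives
\begin{align*}
\delta(\Lambda)^{\dim V_{\beta_0}} \;\le\; \prod_{j=1}^{\dim V_{\beta_0}}\mu_j(\Lambda) \;\asymp\; \operatorname{Vol}(V_{\beta_0}/\Lambda) \;=\; d(w_i\cdot g),
\end{align*}
where $\mu_j$ denotes the $j$-th successive minimum and $\mu_1(\Lambda)=\delta(\Lambda)$, the first inequality being trivial and the reverse comparison $\prod_j\mu_j(\Lambda)\le C\,\mu_1(\Lambda)^{\dim V_{\beta_0}}$ coming from the bounded ratio $\mu_j(\Lambda)/\mu_1(\Lambda)$. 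Combining the two directions yields $d(w_i\cdot g)\asymp\delta(\Lambda)^{\dim V_{\beta_0}}$ with implicit constants depending only on $\mathbf G$ and $\Gamma$, as claimed. The main obstacle is bookkeeping the implicit constants through the reduction step: one must be careful that replacing $w_i\cdot g$ by its Siegel-set representative genuinely leaves $d(w_i\cdot g)$ unchanged (it does, since $\rho(\Gamma)$ is unimodular on $V_{\beta_0}$ and the stabilizer issues are handled by Lemma~\ref{l24}) and perturbs $\delta$ only boundedly — here the compactness of $K$ and of $\overline{\Omega}$, together with the finiteness of $\mathcal K$, is exactly what is needed, and it is also what forces the constants to be uniform rather than depending on $g$.
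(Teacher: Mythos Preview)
Your reduction step contains a genuine gap: you pass from $\rho(k\cdot a\cdot\omega\cdot x)\mathbb Z^d\cap V_{\beta_0}$ to $\Lambda_a=\rho(a)\cdot(\rho(x)\mathbb Z^d)\cap V_{\beta_0}$ by invoking only that $\rho(K)$ and $\rho(\omega)$ are bounded operators. That is not enough to remove $k$, because the maximal compact subgroup $K$ does \emph{not} preserve the weight space $V_{\beta_0}$. The intersection $\rho(k)\cdot L\cap V_{\beta_0}$ need not equal $\rho(k)\cdot(L\cap V_{\beta_0})$; in fact for generic $k\in K$ the lattice $\rho(k)\cdot L$ may intersect $V_{\beta_0}$ trivially even when $L\cap V_{\beta_0}$ is a full-rank lattice. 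So boundedness of $\rho(k)$ gives you no control whatsoever on $\delta$ or the covolume of the intersection after multiplying by $k$. (Your treatment of $\omega$ is fine, since $\omega\in M\cdot R_u(\mathbf P_0)$ and both factors stabilize $V_{\beta_0}$; the problem is purely with the $K$-factor of the Siegel set.)

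The paper avoids this by never introducing $K$. It uses instead the decomposition coming from Corollary~\ref{c23}: write $g=h_i\cdot f_i$ with $h_i\in\mathbf H_{w_i}(\mathbb R)$ and $f_i\in\mathbf F_{w_i}(\mathbb Q)$, and then use the \emph{rational} reduction $\mathbf G(\mathbb Q)=\mathbf P_0(\mathbb Q)\cdot\mathcal K\cdot\Gamma$ to write $w_i f_i=p\cdot x\cdot\gamma$ with $p\in\mathbf P_0(\mathbb Q)$. Every group that now appears---$w_i\mathbf H_{w_i}w_i^{-1}$, $R_u(\mathbf P_0)$, $\mathbf T$, $\mathbf M_a$---either fixes $V_{\beta_0}$ pointwise or stabilizes it, so the intersection with $V_{\beta_0}$ can be tracked exactly. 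The unbounded piece is $\mathbf M_a(\mathbb R)$, which is handled by using compactness of $\mathbf M_a(\mathbb R)/(\mathbf M_a(\mathbb Z)\cap\Gamma)$ to write $p_2\cdot p_3$ with $p_2$ in a fixed compact set and $p_3\in\Gamma$. After this reduction one is left with $\rho(p_1\cdot p_2)(\rho(p_3\cdot x)\mathbb Z^d\cap V_{\beta_0})$ where $p_1\in\mathbf T$ acts by the scalar $e^{\beta_0(p_1)}$, and your Minkowski argument then applies cleanly. To salvage your approach you would need to replace the Iwasawa-type Siegel decomposition by this parabolic one.
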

\begin{proof}
By Corollary~\ref{c23}, there exist $h\in\mathbf H_{w}(\mathbb R)$ and $f\in\mathbf F_{w}(\mathbb Q)$ such that $g=h\cdot f$. Note that $$f\in\mathbf G(\mathbb Q)=w^{-1}\cdot\mathbf G(\mathbb Q)=w^{-1}\cdot\mathbf P_0(\mathbb Q)\cdot \mathcal K\cdot\Gamma$$ and there exist $p\in\mathbf P_0(\mathbb Q)$, $x\in \mathcal K$ and $\gamma\in\Gamma$ such that $f=w^{-1}\cdot p\cdot x\cdot\gamma$. We know that $\mathbf P_0=\mathbf M_0\cdot R_u(\mathbf P_0)$ where $\mathbf M_0$ is the Levi factor of $\mathbf P_0$. We write $\mathbf M_a$ for the maximal $\mathbb Q$-anisotropic subgroup in $\mathbf M_0$. Then $\mathbf M_0=\mathbf T\cdot\mathbf M_a$, $\mathbf T$ commutes with $\mathbf M_a$ and $\mathbf M_a(\mathbb R)/(\mathbf M_a(\mathbb Z)\cap\Gamma)$ is compact. So there exist $p_1\in\mathbf T$, $p_2$ in a compact fundamental domain of $\mathbf M_a(\mathbb Z)\cap\Gamma$ in $\mathbf M_a(\mathbb R)$, $p_3\in\mathbf M_a(\mathbb Z)\cap\Gamma$ and $u\in R_u(\mathbf P_0)$ such that $$p=u\cdot p_1\cdot p_2\cdot p_3.$$ Then we have $$w\cdot g=(w\cdot h\cdot w^{-1})\cdot u\cdot p_1\cdot p_2\cdot p_3\cdot x\cdot\gamma.$$ Since $w\mathbf H_{w}w^{-1}$ and $R_u(\mathbf P_0)$ fix every element in $V_{\beta_0}$, and $\mathbf T$ and $\mathbf M_a$ preserve the weight space $V_{\beta_0}$, we obtain $$\rho(w\cdot g)\cdot\mathbb Z^d\cap V_{\beta_0}=\rho(p_1\cdot p_2\cdot p_3\cdot x)\cdot\mathbb Z^d\cap V_{\beta_0}=\rho(p_1\cdot p_2)(\rho(p_3\cdot x)\cdot\mathbb Z^d\cap V_{\beta_0}).$$ The lemma then follows from the facts that $p_1\in\mathbf T$ acts as scalars in $V_{\beta_0}$, $p_2$ is in a fixed compact subset in $\mathbf M_a(\mathbb R)$ and $\rho(p_3\cdot x)\cdot\mathbb Z^d$ is commensurable with $\mathbb Z^d$ by $\Gamma$ and $\mathcal K$.
\end{proof}

Now we consider the case where $w=e$ is the identity element in $_{\mathbb Q}\mathcal W$. Then $\mathbf F_{w}=\mathbf F_e$ and $\mathbf H_{w}=\mathbf H_e$. Let $g$ be a rational element in $R_u(\overline{\mathbf P}_0)(\mathbb R)$. By Lemma~\ref{l24}, there exist $h\in\mathbf H_e(\mathbb R)$, $p\in\mathbf P_0(\mathbb R)$, $x\in\mathcal K$ and $\gamma\in\Gamma$ such that $$g=h\cdot p\cdot x\cdot\gamma.$$ Furthermore, we can write $$p=a\cdot m\cdot u$$ where $a\in\mathbf T(\mathbb R)$, $m\in\mathbf M_a(\mathbb R)$ and $u\in R_u(\mathbf P_0)(\mathbb R)$. Then we can compute the height $\height(g)$ of $g$ as follows: $$(\rho(g)\cdot\mathbb Z^d)\cap V_{\beta_0}=\rho(a\cdot m\cdot u\cdot x)\cdot\mathbb Z^d\cap V_{\beta_0}=\rho(a\cdot m\cdot x)\cdot\mathbb Z^d\cap V_{\beta_0}=\rho(a\cdot m)(\rho(x)\cdot\mathbb Z^d\cap V_{\beta_0})$$ and $$\height(g)=c_x\cdot e^{\beta_0(a)\cdot\dim V_{\beta_0}}$$ for some constant $c_x>0$ depending only on $x$. Here we use the facts that $\mathbf M_a$ and $R_u(\mathbf P_0)$ stabilize $V_{\beta_0}$ and preserve the volumes of sets in $V_{\beta_0}$, $\mathbf T$ acts as scalars in $V_{\beta_0}$ and $x\in\mathbf G(\mathbb Q)$. 

\begin{definition}\label{def27}
Let $\mathcal K=\{x_j\}_{j\in J}$. A rational element $g$ in $R_u(\overline{\mathbf P}_0)(\mathbb R)$ is called $j$-rational for some $j\in J$ if it can be written as $$g=h\cdot p\cdot x_j\cdot\gamma$$ for some $h\in\mathbf H_e(\mathbb R)$, $p\in\mathbf P_0(\mathbb R)$, $x_j\in\mathcal K$ and $\gamma\in\Gamma$. Furthermore, if $p=a\cdot m\cdot u$ where $a\in\mathbf T(\mathbb R)$, $m\in\mathbf M_a(\mathbb R)$ and $u\in R_u(\mathbf P_0)(\mathbb R)$, then $a$ is called the central coordinate of $g$ which we denote by $a_g$.
\end{definition}

\section{Counting rational elements}\label{counting}
In this section, we consider the problem of counting rational elements in $R_u(\overline{\mathbf P}_0)(\mathbb R)$. As discussed in \S\ref{intro}, to get a lower bound of the Hausdorff dimension of $S_\rho(\psi)^c\cap R_u(\overline{\mathbf P}_0)(\mathbb R)$, we need to select certain rational elements and construct a Cantor type subset from disjoint neighborhoods of these rational elements. For our purpose, in the following, we will select rational elements according to several algebraic conditions, and then count these rational elements by the mixing property of the flow $\{a_t\}_{t\in\mathbb R}$ on $\mathbf G(\mathbb R)^0/\Gamma$. The disjointness of the neighborhoods around these rational elements will follow from the transversal structure of some submanifolds in $\mathbf G(\mathbb R)^0/\Gamma$. 

For any $\mathbb Q$-algebraic group $\mathbf L$ in $\mathbf G$ and $\delta>0$, we write $B_{\mathbf L}(\delta)$ for the open ball of radius $\delta>0$ around the identity in $\mathbf L(\mathbb R)$. For any $t\in\mathbb R$, we write $$B_{\mathbf L}(\delta,t):=a_{-t}\cdot B_{\mathbf L}(\delta)\cdot a_t.$$ The identity component of $\mathbf L$ with respect to the Zariski topology is denoted by $\mathbf L^0$, and the identity component of $\mathbf L(\mathbb R)$ (as a Lie group) is denoted by $\mathbf L(\mathbb R)^0$.

In this section and \S\ref{lbd1}, we assume that the action of the one-parameter subgroup $\{a_t\}_{t\in\mathbb R}$ on $\mathbf G(\mathbb R)^0/\Gamma$ is mixing. This assumption holds when $\{a_t\}_{t\in\mathbb R}$ projects nontrivially into any $\mathbb Q$-simple factor of $\mathbf G$. Later in \S\ref{lbd2}, we will explain how to establish the results about counting rational elements and estimating Hausdorff dimensions in the case where $\{a_t\}_{t\in\mathbb R}$ projects trivially into some of the $\mathbb Q$-simple factors of $\mathbf G$.

Let $U$ be a small open bounded subset in $R_u(\overline{\mathbf P}_0)(\mathbb R)$ which projects injectively into $R_u(\overline{\mathbf P}_0)(\mathbb R)/R_u(\overline{\mathbf P}_0)(\mathbb R)\cap\Gamma$. For any $0<A<B$, define $$S(U,A,B)=\{q\in U: q \textup{ rational and } A\leq \height(q)\leq B\}.$$ The Lie algebra $\mathfrak a$ of $\mathbf T(\mathbb R)$ has the following direct sum decomposition $$\mathfrak a=\Lie(a_t)\oplus\ker(\beta_0)$$ where $\Lie(a_t)$ is the Lie algebra of the one parameter subgroup $\{a_t\}_{t\in\mathbb R}$ and we have $$\beta_0(a_t)=\lambda_0(a_t)<0\;(t>0).$$ Denote by $\pi_{\ker(\beta_0)}$ the projection of $\mathfrak a$ onto $\ker(\beta_0)$ along the linear subspace $\Lie(a_t)$. For convenience, we will write $\pi_{\ker(\beta_0)}(a)$ for $\pi_{\ker(\beta_0)}(\log (a))$ whenever $a\in\exp(\mathfrak a)$ (here $\log$ is the inverse of the exponential map $\exp$). For any compact subset $L$ in $\Lie(a_t)$ and any compact subset $K$ in $\ker(\beta_0)$, we denote by $$\mathfrak a_{L, K}=\{x\in\mathfrak a: x=y_1+y_2, y_1\in L, y_2\in K\}.$$ For any $x_j\in\mathcal K$ ($j\in J$), any compact subset $K_1$ in $\ker(\beta_0)$, any compact subset $K_2$ in $\mathbf H_e(\mathbb R)$, any compact subset $K_3\subset\mathbf M_a(\mathbb R)\cap\mathbf G(\mathbb R)^0$ and any compact subset $K_4\subset R_u(\mathbf P_0)(\mathbb R)$, we define $$S_{K_1,K_2,K_3,K_4,j}(U,A,B)$$ to be the set of all rational elements $q$ in $U$ such that
\begin{enumerate}
\item $A\leq\height(q)\leq B$ and $q$ is $j$-rational for $x_j\in\mathcal K$; 
\item $q=a\cdot h\cdot m\cdot u\cdot x_j\cdot\gamma$ for some $a\in\exp(\mathfrak a)$, $\pi_{\ker(\beta_0)}(a)\in K_1$ and $h\in K_2$, $m\in K_3$, $u\in K_4$ and $\gamma\in\Gamma$.
\end{enumerate}
Note that $S_{K_1,K_2,K_3,K_4,j}(U,A,B)\subset S(U,A,B)$, and by definition one can check that if $$S_{K_1,K_2,K_3,K_4,j}(U,A,B)\neq\emptyset,$$ then $x_j\in\mathbf G(\mathbb R)^0$ (and such elements exist as $e\in\mathcal K$). The elements in $S_{K_1,K_2,K_3,K_4,j}(U,A,B)$ are the rational elements we select when we construct a Cantor type subset in \S\ref{lbd1}. We denote by $$S_{K_1,K_2,K_3,K_4}(U,A,B):=\bigcup_{j\in J} S_{K_1,K_2,K_3,K_4,j}(U,A,B).$$

To count the rational elements in $S_{K_1,K_2,K_3,K_4,j}(U,A,B)$ $(j\in J)$, we need the following result about limiting distributions of translates of unipotent orbits pushed by the flow $\{a_t\}_{t\in\mathbb R}$ on $\mathbf G(\mathbb R)^0/\Gamma$, which is a direct consequence of the mixing property of $\{a_t\}_{t\in\mathbb R}$ on $\mathbf G(\mathbb R)^0/\Gamma$. 

\begin{proposition}\label{p31}
Let $x\in\mathbf G(\mathbb R)^0/\Gamma$ and $W\subset\mathbf G(\mathbb R)^0/\Gamma$ an open bounded subset whose boundary has measure zero with respect to the invariant probability measure $\mu_{\mathbf G^0(\mathbb R)/\Gamma}$ on $\mathbf G(\mathbb R)^0/\Gamma$. Let $\chi_{W}$ denote the characteristic function of $W$. Then for any bounded open subset $U$ in $R_u(\overline{\mathbf P}_0)(\mathbb R)$ we have $$\lim_{t\to\infty}\frac1{\mu_{R_u(\overline{\mathbf P}_0)}(U)}\int_{U}\chi_W(a_t\cdot nx)d\mu_{R_u(\overline{\mathbf P}_0)}(n)=\int_{\mathbf G(\mathbb R)^0/\Gamma}\chi_W d\mu_{\mathbf G(\mathbb R)^0/\Gamma}$$ where $\mu_{R_u(\overline{\mathbf P}_0)}$ is the Haar measure on $R_u(\overline{\mathbf P}_0)(\mathbb R)$.
\end{proposition}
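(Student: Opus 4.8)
The plan is the classical argument that deduces equidistribution of expanding unipotent translates from mixing, via a transverse thickening. Throughout the sketch write $\mu:=\mu_{\mathbf G(\mathbb R)^0/\Gamma}$, and recall that by hypothesis $\{a_t\}_{t\in\mathbb R}$ acts mixingly on $(\mathbf G(\mathbb R)^0/\Gamma,\mu)$.

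First I would reduce to continuous test functions and fix a convenient normalisation of measures. Since $\mu(\partial W)=0$, one squeezes $\chi_W$ between functions $f^-\le\chi_W\le f^+$ in $C_c(\mathbf G(\mathbb R)^0/\Gamma)$ with $\mu(f^+-f^-)$ arbitrarily small, so by monotonicity it suffices to prove, for every $f\in C_c(\mathbf G(\mathbb R)^0/\Gamma)$,
\[
A_t(f):=\frac1{\mu_{R_u(\overline{\mathbf P}_0)}(U)}\int_U f(a_t n x)\,d\mu_{R_u(\overline{\mathbf P}_0)}(n)\ \xrightarrow[t\to\infty]{}\ \int_{\mathbf G(\mathbb R)^0/\Gamma}f\,d\mu .
\]
As $A_t$ is an average over $U$, a routine partition reduces matters to the case $\operatorname{diam}U$ so small that $(\sigma,n)\mapsto\sigma n x$ is injective on $\Theta_{\varepsilon_0}\times U$ for some fixed $\varepsilon_0>0$, where $\Theta_\varepsilon\subset\mathbf P_0(\mathbb R)$ denotes the $\varepsilon$-ball around $e$ (here one uses that the fibres of $\mathbf G(\mathbb R)\to\mathbf G(\mathbb R)^0/\Gamma$ are discrete and $\overline U\cdot x$ is compact). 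Next, the big cell $\mathbf P_0\cdot R_u(\overline{\mathbf P}_0)$ is Zariski open and dense with complement of Haar measure zero, and $(\sigma,n)\mapsto\sigma n$ is an isomorphism onto it; since $\mathbf G$ is semisimple, hence unimodular, the Radon–Nikodym derivative, with respect to Haar measure on $\mathbf G(\mathbb R)$, of the push-forward of $\mu_{\mathbf P_0}\otimes\mu_{R_u(\overline{\mathbf P}_0)}$ under this map is simultaneously left $\mathbf P_0(\mathbb R)$-invariant and right $R_u(\overline{\mathbf P}_0)(\mathbb R)$-invariant, hence constant. After rescaling I may therefore assume $d\mu_{\mathbf G}(\sigma n)=d\mu_{\mathbf P_0}(\sigma)\,d\mu_{R_u(\overline{\mathbf P}_0)}(n)$ on the big cell; by scale-invariance of the quotient defining $A_t$ this is harmless.

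Second I would build the transverse thickening. For $\varepsilon\le\varepsilon_0$ the map $(\sigma,n)\mapsto\sigma n x$ is a diffeomorphism of $\Theta_\varepsilon\times U$ onto an open set $W_\varepsilon\subset\mathbf G(\mathbb R)^0/\Gamma$, and by the above $\mu|_{W_\varepsilon}$ is a constant multiple of the product $d\mu_{\mathbf P_0}(\sigma)\,d\mu_{R_u(\overline{\mathbf P}_0)}(n)$. Put $h_\varepsilon:=\mu(W_\varepsilon)^{-1}\mathbf 1_{W_\varepsilon}$, so $\mu(h_\varepsilon)=1$ (a continuous bump would serve equally well). The decisive point is that $\Ad(a_t)$ for $t\ge0$ does not expand the transverse direction: $\{a_t\}_{t\in\mathbb R}\subset\mathbf T$ centralises $\mathbf M_0$ and $\alpha(a_t)\le0$ for all $\alpha\in\Phi^+$, $t\ge0$, so $\Ad(a_t)$ is non-expanding on $\Lie(\mathbf P_0)=\Lie(\mathbf M_0)\oplus\Lie(R_u(\mathbf P_0))$; hence for $\sigma\in\Theta_\varepsilon$ the conjugate $a_t\sigma a_{-t}$ stays in a $C\varepsilon$-ball around $e$, uniformly in $t\ge0$. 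Writing
\[
a_t\,\sigma\,n\,x=(a_t\sigma a_{-t})(a_t n x),
\]
with the small factor on the \emph{left} — so that it is not conjugated, and hence not blown up, by the expanding factor $a_t n a_{-t}$ — gives $d(a_t\sigma n x,\,a_t n x)\le C\varepsilon$ for all $t\ge0$, $\sigma\in\Theta_\varepsilon$, $n\in U$, whence by uniform continuity of $f$,
\[
|f(a_t\sigma n x)-f(a_t n x)|\le\omega_f(C\varepsilon)\qquad(t\ge0,\ \sigma\in\Theta_\varepsilon,\ n\in U),
\]
$\omega_f$ being the modulus of continuity of $f$.

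Finally I would unfold and invoke mixing. Using the product form of $\mu|_{W_\varepsilon}$ together with the last estimate,
\begin{align*}
\int_{\mathbf G(\mathbb R)^0/\Gamma}f(a_t g)h_\varepsilon(g)\,d\mu(g)
&=\frac1{\mu_{\mathbf P_0}(\Theta_\varepsilon)\,\mu_{R_u(\overline{\mathbf P}_0)}(U)}\int_{\Theta_\varepsilon}\!\!\int_U f(a_t\sigma n x)\,d\mu_{R_u(\overline{\mathbf P}_0)}(n)\,d\mu_{\mathbf P_0}(\sigma)\\
&=A_t(f)+O\bigl(\omega_f(C\varepsilon)\bigr),
\end{align*}
uniformly in $t\ge0$. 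On the other hand, mixing of $\{a_t\}_{t\in\mathbb R}$ on $(\mathbf G(\mathbb R)^0/\Gamma,\mu)$ yields $\int f(a_t g)h_\varepsilon(g)\,d\mu(g)\to\mu(f)\mu(h_\varepsilon)=\mu(f)$ as $t\to\infty$. Comparing the two, $\limsup_{t\to\infty}|A_t(f)-\mu(f)|\le C'\omega_f(C\varepsilon)$ for every $\varepsilon>0$, and letting $\varepsilon\to0$ concludes. I expect the thickening step to be the only genuine obstacle: one must thicken $U$ precisely in the $\mathbf P_0$-direction — the one \emph{not} expanded by $a_t$ as $t\to+\infty$ — insert the thickening on the left of $n$, and use that $\mu$ restricts to an \emph{exact} product on the box so that no spurious density weight contaminates the $n$-average; the approximation of $\chi_W$, the reduction to small $U$, and the choice of $h_\varepsilon$ are routine.
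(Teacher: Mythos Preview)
Your proof is correct and is precisely the argument the paper has in mind. The paper does not give its own proof of Proposition~\ref{p31}; it simply states that the result is ``a direct consequence of the mixing property'' and, in Remark~\ref{r32}, refers to \cite[\S2]{KM96} for the case where $R_u(\overline{\mathbf P}_0)$ is contained in the group generated by the unstable and central subgroups of $\{a_t\}$. What you have written is exactly that Kleinbock--Margulis thickening argument: thicken the $R_u(\overline{\mathbf P}_0)$-piece transversally by a small $\mathbf P_0$-box, use that $\Ad(a_t)$ is non-expanding on $\Lie(\mathbf P_0)$ for $t\ge 0$ (which is the content of the paper's standing assumption that the stable horospherical subgroup lies in $R_u(\mathbf P_0)$), and then invoke mixing on the thickened set.
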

\begin{remark}\label{r32}
Note that $R_u(\overline{\mathbf P}_0)(\mathbb R)$ is not necessarily the unstable horospherical subgroup of $\{a_t\}_{t\in\mathbb R}$ in $\mathbf G(\mathbb R)$. Here we can still apply the mixing property to obtain Proposition~\ref{p31} as long as $R_u(\overline{\mathbf P}_0)(\mathbb R)$ is contained in the group generated by the unstable subgroup and the centralizer of $\{a_t\}_{t\in\mathbb R}$ in $\mathbf G(\mathbb R)$. (See~\cite[\S2]{KM96}.)
\end{remark}

Now we define a measure on $R_u(\overline{\mathbf P}_0)(\mathbb R)$, which will be used to study the size of the subset $S_{K_1,K_2,K_3,K_4,j}(U,A,B)$ for some sufficiently large numbers $0<A<B$. Recall that $$R_u(\overline{\mathbf P}_0)=\mathbf H_e\cdot\mathbf F_e$$ where $\mathbf F_e=R_u(\overline{\mathbf P}_{\beta_0})$ and $\mathbf H_e=R_u(\overline{\mathbf P}_0)\cap\mathbf P_{\beta_0}$. By \cite[Proposition 5.26]{K86}, we fix a Haar measure $\mu_{\mathbf H_e}$ on $\mathbf H_e(\mathbb R)$ and a Haar measure $\mu_{\mathbf F_e}$ on $\mathbf F_e(\mathbb R)$ such that the product maps $$\mathbf F_e\times\mathbf H_e\to\mathbf F_e\cdot\mathbf H_e=R_u(\overline{\mathbf P}_0)\textup{ and }\mathbf H_e\times\mathbf F_e\to\mathbf H_e\cdot\mathbf F_e=R_u(\overline{\mathbf P}_0)$$ induce Haar measures on $R_u(\overline{\mathbf P}_0)(\mathbb R).$ For any $q\in\mathbf F_e(\mathbb Q)$, we define $m_{\mathbf H_eq}$ to be the locally finite measure defined on $R_u(\overline{\mathbf P}_0)(\mathbb R)$ which is supported on $\mathbf H_e(\mathbb R)\cdot q$ and induced by $\mu_{\mathbf H_e}$ via the product map $$\mathbf H_e(\mathbb R)\times\{q\}\to\mathbf H_e(\mathbb R)\cdot q\subset R_u(\overline{\mathbf P}_0)(\mathbb R).$$ Then we define $$m_{\mathbf H_e}:=\sum_{q\in\mathbf F_e(\mathbb Q)}m_{\mathbf H_eq}.$$ Note that $m_{\mathbf H_e}$ is not a locally finite measure on $R_u(\overline{\mathbf P}_0)(\mathbb R)$, and it is defined by the leaf-wise measures on the countable leaves $\mathbf H_e(\mathbb R)\cdot q$ $(q\in\mathbf F_e(\mathbb Q))$ in the foliation $\mathcal F_{\mathbf H_e}$ induced by the group action of $\mathbf H_e(\mathbb R)$ on $R_u(\overline{\mathbf P}_0)(\mathbb R)$. Note also that by Corollary~\ref{c23}, we have $$S_{K_1,K_2,K_3,K_4,j}(U,A,B)\subset\bigcup_{q\in\mathbf F_e(\mathbb Q)}\mathbf H_e(\mathbb R)\cdot q=\mathbf H_e(\mathbb R)\cdot\mathbf F_e(\mathbb Q).$$ In the following, we use the arguments in \cite{FZ22, Z19} and estimate the $m_{\mathbf H_e}$-measure of the subset $$S_{K_1,K_2,K_3,K_4,j}(U,(l/2)^{\dim V_{\beta_0}},l^{\dim V_{\beta_0}})$$ for $j\in J$ with $x_j\in\mathbf G(\mathbb R)^0\cap\mathcal K$ and for sufficiently large $l>0$. For convenience, we write $$A_l=(l/2)^{\dim V_{\beta_0}}\textup{ and }B_l=l^{\dim V_{\beta_0}}\quad(\forall l>0).$$

For any $l>1$, let $T=T(l)>0$  such that $$\beta_0(a_{T})=-\ln l.$$ Let $q$ be a rational element in $U\subset R_u(\overline{\mathbf P}_0)(\mathbb R)$. By Lemma~\ref{l24}, we may write $$q=a\cdot h\cdot m\cdot u\cdot x_k\cdot\gamma\in\mathbf H_e(\mathbb R)\cdot\mathbf P_0(\mathbb R)\cdot x_k\cdot\Gamma$$ for some $a\in\mathbf T(\mathbb R)$, $h\in\mathbf H_e(\mathbb R)$, $m\in\mathbf M_a(\mathbb R)$, $u\in R_u(\mathbf P_0)(\mathbb R)$, $x_k\in\mathcal K$ and $\gamma\in\Gamma$. Then
\begin{align*}
&q\in S_{K_1,K_2,K_3,K_4,j}(U,A_l,B_l)\\
\iff& q\textup{ rational in }U,\;A_l\leq\height(q)\leq B_l,\textrm{ and}\\
&\quad a\in\exp(\mathfrak a), \pi_{\ker(\beta_0)}(a)\in K_1, h\in K_2, m\in K_3, u\in K_4, k=j.\\
\iff& a_{T}\cdot q\Gamma\in a_{T}\cdot U\Gamma/\Gamma\textrm{ and } a_{T}\cdot q\Gamma\in\exp(\mathfrak a_{I_0,K_1})\cdot K_2\cdot K_3\cdot K_4\cdot x_j\cdot\Gamma/\Gamma
\end{align*}
where $I_0$ is the following compact interval in the Lie algebra of $\left\{a_t\right\}_{t\in\mathbb R}$ $$I_0:=\left\{x\in\mathbb\Lie(a_t): -\ln(2/{c_{x_j}^{\frac1{\dim V_{\beta_0}}}})\leq\beta_0(x)\leq-\ln(1/{c_{x_j}^{\frac1{\dim V_{\beta_0}}}})\right\}.$$ Here we use the formula $$d(q)=c_{x_j}e^{\beta_0(a)\cdot\dim V_{\beta_0}}.$$ This implies that $$a_T\cdot S_{K_1,K_2,K_3,K_4,j}(U,A_l,B_l)\Gamma/\Gamma=a_{T}\cdot U\Gamma/\Gamma\cap\exp(\mathfrak a_{I_0,K_1})K_2K_3K_4x_j\Gamma/\Gamma.$$ Since $\exp(\mathfrak a_{I_0,K_1})K_2K_3K_4$ is a compact subset in $\mathbf P_{\beta_0}(\mathbb R)$ and $$\mathbf F_e(\mathbb R)\cap\mathbf P_{\beta_0}(\mathbb R)=R_u(\overline{\mathbf P}_{\beta_0})(\mathbb R)\cap\mathbf P_{\beta_0}(\mathbb R)=\{e\},$$ there exists a small neighborhood of identity $B_{\mathbf F_e}(\delta_0)$ in $\mathbf F_e(\mathbb R)$ (for some $\delta_0>0$) such that $$B_{\mathbf F_e}(\delta_0)\times\exp(\mathfrak a_{I_0,K_1})K_2K_3K_4x_j\Gamma/\Gamma\to B_{\mathbf F_e}(\delta_0)\exp(\mathfrak a_{I_0,K_1})K_2K_3K_4x_j\Gamma/\Gamma$$ is a homeomorphism, and hence the following product map 
\begin{align}\label{eqnmap}
B_{\mathbf F_e}(\delta_0)\times a_T\cdot S_{K_1,K_2,K_3,K_4,j}(U,A_l,B_l)\Gamma/\Gamma\to B_{\mathbf F_e}(\delta_0)\cdot a_T\cdot S_{K_1,K_2,K_3,K_4,j}(U,A_l,B_l)\Gamma/\Gamma
\end{align}
is bijective. Consequently, for any $p,q\in S_{K_1,K_2,K_3,K_4,j}(U,A_l,B_l)$, the subsets $B_{\mathbf F_e}(\delta_0,T)\cdot p\Gamma$ and $B_{\mathbf F_e}(\delta_0,T)\cdot q\Gamma$ are disjoint, where $$B_{\mathbf F_e}(\delta_0,T):=a_{-T}\cdot B_{\mathbf F_e}(\delta_0)\cdot a_{T}.$$ 

Now we estimate the $m_{\mathbf H_e}$-measure of $S_{K_1,K_2,K_3, K_4,j}(U,A_l,B_l)$, following the arguments in \cite[\S4]{FZ22}. First we prove an upper bound for $m_{\mathbf H_e}(S_{K_1,K_2,K_3,K_4,j}(U,A_l,B_l))$. Fix a sufficiently small number $0<\epsilon<\delta_0$ such that $$\mu_{R_u(\overline{\mathbf P}_0)}(U)\leq\mu_{R_u(\overline{\mathbf P}_0)}(B_{\mathbf F_e}(\epsilon)\cdot U)\leq 2\mu_{R_u(\overline{\mathbf P}_0)}(U).$$ Then for sufficiently large $l>0$, we have $$B_{\mathbf F_e}(\epsilon,T)\subset  B_{\mathbf F_e}(\epsilon)\textup{ and }B_{\mathbf F_e}(\epsilon,T)\cdot S_{K_1,K_2,K_3,K_4,j}(U,A_l,B_l)\Gamma\subset B_{\mathbf F_e}(\epsilon)\cdot U\Gamma/\Gamma.$$ Since the map~\eqref{eqnmap} is bijective, the following product map $$B_{\mathbf F_e}(\delta_0,T)\times S_{K_1,K_2,K_3,K_4,j}(U,A_l,B_l)\Gamma/\Gamma\to B_{\mathbf F_e}(\delta_0,T)\cdot S_{K_1,K_2,K_3,K_4,j}(U,A_l,B_l)\Gamma/\Gamma$$ is also bijective. By Proposition~\ref{p31} and the fact that $U$ projects injectively into the quotient space $R_u(\overline{\mathbf P}_0)(\mathbb R)/R_u(\overline{\mathbf P}_0)(\mathbb R)\cap\Gamma$, we deduce that 
\begin{align*}
&m_{\mathbf H_e}(S_{K_1,K_2,K_3,K_4,j}(U,A_l,B_l))\cdot\mu_{\mathbf F_e}(B_{\mathbf F_e}(\epsilon,T))\\
=&\mu_{R_u(\overline{\mathbf P}_0)}(B_{\mathbf F_e}(\epsilon,T)\cdot S_{K_1,K_2,K_3,K_4,j}(U,A_l,B_l))\\
\leq&\int_{B_{\mathbf F_e}(\epsilon)\cdot U} \chi_{B_{\mathbf F_e}(\epsilon)\exp(\mathfrak a_{I_0,K_1})K_2K_3K_4x_j\Gamma/\Gamma}(a_{T}u\Gamma)d\mu_{R_u(\overline{\mathbf P}_0)}(u)\\
\asymp&\mu_{R_u(\overline{\mathbf P}_0)}(B_{\mathbf F_e}(\epsilon)\cdot U)\cdot\mu_{\mathbf G(\mathbb R)^0/\Gamma}(B_{\mathbf F_e}(\epsilon)\exp(\mathfrak a_{I_0,K_1})K_2K_3K_4x_j\Gamma/\Gamma)\\
\asymp&\mu_{R_u(\overline{\mathbf P}_0)}(U)\cdot\mu_{\mathbf F_e}(B_{\mathbf F_e}(\epsilon))
\end{align*}
as $T\to\infty$. Here the implicit constant in the last equation depends only on the parameter $\delta_0>0$ (as $0<\epsilon<\delta_0$), and hence depends only on the compact subsets $K_i$'s. Note that 
\begin{align*}
\mu_{\mathbf F_e}(B_{\mathbf F_e}(\epsilon,T))=&\mu_{\mathbf F_e}(B_{\mathbf F_e}(\epsilon))\cdot e^{-\sum_{\alpha\in\Phi(\mathbf F_e)}\alpha(a_T)}\\
=&\mu_{\mathbf F_e}(B_{\mathbf F_e}(\epsilon))\cdot l^{\sum_{\alpha\in\Phi(\mathbf F_e)}\alpha(a_T)/\beta_0(a_T)}\\
=&\mu_{\mathbf F_e}(B_{\mathbf F_e}(\epsilon))\cdot l^{\sum_{\alpha\in\Phi(\mathbf F_e)}\alpha(a_1)/\beta_0(a_1)}.
\end{align*}
So for sufficiently large $l>0$ we have $$m_{\mathbf H_e}(S_{K_1,K_2,K_3,K_4,j}(U,A_l,B_l))\lesssim l^{-\sum_{\alpha\in\Phi(\mathbf F_e)}\alpha(a_1)/\beta_0(a_1)}\cdot\mu_{R_u(\overline{\mathbf P}_0)}(U)$$ where the implicit constant depends only on $K_1$, $K_2$, $K_3$, $K_4$, $I_0$, $\mathbf G$ and $\Gamma$. Similarly, using the arguments in \cite[\S4]{FZ22}, we can prove a lower bound for $m_{\mathbf H_e}(S_{K_1,K_2,K_3,K_4,j}(U,A_l,B_l))$ and for any sufficiently large $l>0$ we have $$m_{\mathbf H_e}(S_{K_1,K_2,K_3,K_4,j}(U,A_l,B_l))\gtrsim l^{-\sum_{\alpha\in\Phi(\mathbf F_e)}\alpha(a_1)/\beta_0(a_1)}\cdot\mu_{R_u(\overline{\mathbf P}_0)}(U).$$ To sum up, we have the following

\begin{proposition}\label{p33}
Let $U$ be a small open bounded subset in $R_u(\overline{\mathbf P}_0)(\mathbb R)$ which projects injectively into $R_u(\overline{\mathbf P}_0)(\mathbb R)/R_u(\overline{\mathbf P}_0)(\mathbb R)\cap\Gamma$. Then for any $j\in J$ with $x_j\in\mathcal K\cap\mathbf G(\mathbb R)^0$, we have $$m_{\mathbf H_e}(S_{K_1,K_2,K_3,K_4,j}(U,A_l,B_l))\asymp l^{-\sum_{\alpha\in\Phi(\mathbf F_e)}\alpha(a_1)/\beta_0(a_1)}\cdot\mu_{R_u(\overline{\mathbf P}_0)}(U)$$ as $l\to\infty$. Consequently, we have $$m_{\mathbf H_e}(S_{K_1,K_2,K_3,K_4}(U,A_l,B_l))\asymp l^{-\sum_{\alpha\in\Phi(\mathbf F_e)}\alpha(a_1)/\beta_0(a_1)}\cdot\mu_{R_u(\overline{\mathbf P}_0)}(U).$$ Here the implicit constants depend only on the compact subsets $K_i$'s, $\mathbf G$ and $\Gamma$.
\end{proposition}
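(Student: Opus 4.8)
The plan is to transport the set $S_{K_1,K_2,K_3,K_4,j}(U,A_l,B_l)$ by the flow to a region of $\mathbf G(\mathbb R)^0/\Gamma$ where it becomes a transversal slice of a \emph{fixed} compact subset, and then to evaluate its $m_{\mathbf H_e}$-measure by combining the equidistribution statement of Proposition~\ref{p31} with an elementary Jacobian computation for the conjugated balls $B_{\mathbf F_e}(\epsilon,T)$. Concretely, I would fix $l>1$ and take $T=T(l)>0$ with $\beta_0(a_T)=-\ln l$; using the denominator formula $d(q)=c_{x_j}e^{\beta_0(a)\dim V_{\beta_0}}$ together with Lemma~\ref{l24}, one checks that $q$ lies in $S_{K_1,K_2,K_3,K_4,j}(U,A_l,B_l)$ exactly when $a_T\cdot q\Gamma$ lies both in $a_T\cdot U\Gamma/\Gamma$ and in the compact set $\exp(\mathfrak a_{I_0,K_1})K_2K_3K_4x_j\Gamma/\Gamma$, with $I_0$ the compact interval in $\Lie(a_t)$ defined above. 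Because the latter set is compact, there is a uniform $\delta_0>0$ for which $B_{\mathbf F_e}(\delta_0)\times a_T S_{K_1,K_2,K_3,K_4,j}(U,A_l,B_l)\Gamma\to\mathbf G(\mathbb R)^0/\Gamma$ is a homeomorphism onto its image; equivalently, the translates $B_{\mathbf F_e}(\delta_0,T)\cdot p\Gamma$ for $p\in S_{K_1,K_2,K_3,K_4,j}(U,A_l,B_l)$ are pairwise disjoint, and this transversality is the mechanism that converts the leaf-wise measure $m_{\mathbf H_e}$ into the ambient probability measure $\mu_{\mathbf G(\mathbb R)^0/\Gamma}$.

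For the upper bound I would fix $0<\epsilon<\delta_0$ with $\mu_{R_u(\overline{\mathbf P}_0)}(U)\le\mu_{R_u(\overline{\mathbf P}_0)}(B_{\mathbf F_e}(\epsilon)\cdot U)\le 2\mu_{R_u(\overline{\mathbf P}_0)}(U)$; for all large $l$ one has $B_{\mathbf F_e}(\epsilon,T)\subset B_{\mathbf F_e}(\epsilon)$, so the $B_{\mathbf F_e}(\epsilon,T)$-thickening of $a_T S_{K_1,K_2,K_3,K_4,j}(U,A_l,B_l)\Gamma$ sits inside $B_{\mathbf F_e}(\epsilon)\cdot U\Gamma/\Gamma$. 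Integrating the characteristic function of $B_{\mathbf F_e}(\epsilon)\exp(\mathfrak a_{I_0,K_1})K_2K_3K_4x_j\Gamma/\Gamma$ along the $R_u(\overline{\mathbf P}_0)$-orbit pushed by $a_T$ and using disjointness yields
\[
m_{\mathbf H_e}\bigl(S_{K_1,K_2,K_3,K_4,j}(U,A_l,B_l)\bigr)\cdot\mu_{\mathbf F_e}\bigl(B_{\mathbf F_e}(\epsilon,T)\bigr)\le\int_{B_{\mathbf F_e}(\epsilon)\cdot U}\chi_{B_{\mathbf F_e}(\epsilon)\exp(\mathfrak a_{I_0,K_1})K_2K_3K_4x_j\Gamma/\Gamma}\bigl(a_Tu\Gamma\bigr)\,d\mu_{R_u(\overline{\mathbf P}_0)}(u),
\]
and Proposition~\ref{p31} then shows the right-hand side is $\sim\mu_{R_u(\overline{\mathbf P}_0)}(U)\cdot\mu_{\mathbf F_e}(B_{\mathbf F_e}(\epsilon))$ as $T\to\infty$. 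Dividing by $\mu_{\mathbf F_e}(B_{\mathbf F_e}(\epsilon,T))$ and using that $\Ad(a_{-T})$ scales each $\mathbb Q$-root space of $\mathbf F_e$ by $e^{-\alpha(a_T)}$, so that $\mu_{\mathbf F_e}(B_{\mathbf F_e}(\epsilon,T))=e^{-\sum_{\alpha\in\Phi(\mathbf F_e)}\alpha(a_T)}\mu_{\mathbf F_e}(B_{\mathbf F_e}(\epsilon))=l^{\sum_{\alpha\in\Phi(\mathbf F_e)}\alpha(a_1)/\beta_0(a_1)}\mu_{\mathbf F_e}(B_{\mathbf F_e}(\epsilon))$ in view of $\beta_0(a_T)=-\ln l$, gives the claimed upper bound for $m_{\mathbf H_e}(S_{K_1,K_2,K_3,K_4,j}(U,A_l,B_l))$.

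For the lower bound I would run the same argument in the opposite direction: shrink $U$ slightly to $U'$ so that $B_{\mathbf F_e}(\epsilon)\cdot U'\subset U$, check that for all large $l$ the intersection of $a_T(B_{\mathbf F_e}(\epsilon)\cdot U')\Gamma/\Gamma$ with a slightly shrunk copy of $\exp(\mathfrak a_{I_0,K_1})K_2K_3K_4x_j\Gamma/\Gamma$ lies inside the $B_{\mathbf F_e}(\epsilon,T)$-thickening of $a_T S_{K_1,K_2,K_3,K_4,j}(U,A_l,B_l)\Gamma$, and apply Proposition~\ref{p31} once more, exactly as in \cite[\S4]{FZ22}, to obtain $m_{\mathbf H_e}(S_{K_1,K_2,K_3,K_4,j}(U,A_l,B_l))\gtrsim l^{-\sum_{\alpha\in\Phi(\mathbf F_e)}\alpha(a_1)/\beta_0(a_1)}\mu_{R_u(\overline{\mathbf P}_0)}(U)$. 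Since $J$ is finite and the pieces $S_{K_1,K_2,K_3,K_4,j}(U,A_l,B_l)$ attached to distinct $x_j\in\mathcal K$ are disjoint, summing over those $j$ with $x_j\in\mathbf G(\mathbb R)^0$ yields the asymptotic for $S_{K_1,K_2,K_3,K_4}(U,A_l,B_l)$ with the same order of magnitude. I expect the lower bound to be the delicate point: one must ensure that the thickened translates of $S_{K_1,K_2,K_3,K_4,j}$ genuinely cover a definite proportion of the equidistributing family, which needs the transversality near $a_T S_{K_1,K_2,K_3,K_4,j}(U,A_l,B_l)\Gamma$ to hold with a single $\delta_0$ valid for all large $l$; this uniformity is precisely what is provided by the compactness of $\exp(\mathfrak a_{I_0,K_1})K_2K_3K_4x_j\Gamma/\Gamma$, which is forced by the cutoffs $K_1,\dots,K_4$ and the two-sided bound $A_l\le d(q)\le B_l$.
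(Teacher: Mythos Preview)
Your proposal is correct and follows essentially the same route as the paper: the identification $a_T\cdot S_{K_1,K_2,K_3,K_4,j}(U,A_l,B_l)\Gamma = a_T\cdot U\Gamma/\Gamma\cap\exp(\mathfrak a_{I_0,K_1})K_2K_3K_4x_j\Gamma/\Gamma$, the transversal $B_{\mathbf F_e}(\delta_0)$-structure coming from compactness of the target, the thickening-and-equidistribution argument via Proposition~\ref{p31}, the Jacobian computation for $\mu_{\mathbf F_e}(B_{\mathbf F_e}(\epsilon,T))$, and the appeal to \cite[\S4]{FZ22} for the lower bound all match the paper's argument exactly. One small remark: you do not actually need disjointness of the pieces for different $j$ to deduce the second estimate---finiteness of $J$ together with the single-$j$ asymptotic already gives both directions of $\sim$.
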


\section{Proof of Theorem~\ref{mthm11}: The case where $0\leq\tau(\psi)<\beta_0(a_{-1})$ and the flow $\{a_t\}_{t\in\mathbb R}$ is mixing}\label{lbd1}
In this section, we prove Theorem~\ref{mthm11} under the assumption that $0\leq\tau(\psi)<\beta_0(a_{-1})$ and the action of $\{a_t\}_{t\in\mathbb R}$ on $\mathbf G(\mathbb R)^0/\Gamma$ is mixing. In the following, we fix some compact subsets $K_1\subset\ker\beta_0$, $K_2\subset\mathbf H_e(\mathbb R)$, $K_3\subset\mathbf M_a(\mathbb R)\cap\mathbf G(\mathbb R)^0$ and $K_4\subset R_u(\mathbf P_0)(\mathbb R)$.

\begin{lemma}\label{l41}
Fix $j\in J$ with $x_j\in\mathbf G(\mathbb R)^0$. Let $U$ be a small open bounded subset in $R_u(\overline{\mathbf P}_0)(\mathbb R)$ and $$\beta_0(a_T)=-\ln l$$ for some $T>0$ and $l>1$. Let $\mathcal F_q=\mathbf H_e(\mathbb R)\cdot q$ be the leaf through $q\in\mathbf F_e(\mathbb Q)$ such that $$\mathcal F_q\cap S_{K_1,K_2,K_3,K_4,j}(U,A_l, B_l)\neq\emptyset.$$ Then there exist $\theta_1>0$ and $\theta_2>0$ such that for any $p\in\mathcal F_q\cap S_{K_1,K_2,K_3,K_4,j}(U,A_l, B_l)$ $$B_{\mathbf H_e}(\theta_1,T)\cdot p\cap U\subset\mathcal F_q\cap S_{K_1,\tilde K_2,K_3,K_4,j}(U,A_l, B_l)$$ where $\tilde K_2=B_{\mathbf H_e}(\theta_2)\cdot K_2$ and $B_{\mathbf H_e}(\theta_1,T)=a_{-T}\cdot B_{\mathbf H_e}(\theta_1)\cdot a_{T}$. Here the constants $\theta_1$ and $\theta_2$ depend only on $K_i$'s, $\mathbf G$ and $\Gamma$.
\end{lemma}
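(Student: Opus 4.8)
The plan is to take an arbitrary point of $B_{\mathbf H_e}(\theta_1,T)\cdot p\cap U$, write it as $h\cdot p$ with $h\in B_{\mathbf H_e}(\theta_1,T)$, and verify one at a time the clauses defining membership in $S_{K_1,\tilde K_2,K_3,K_4,j}(U,A_l,B_l)$, fixing $\theta_1$ small and $\theta_2$ a suitable multiple of $\theta_1$ in the course of the argument. The compact set $K_2$ has to be enlarged precisely because conjugating a small $\mathbf H_e$-element by the torus element $a_p$ occurring in the $S$-decomposition of $p$ changes it by a bounded factor; controlling that factor is the whole content of the lemma.

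Four of the clauses are essentially immediate. Writing $p=h_\star\cdot q$ with $h_\star\in\mathbf H_e(\mathbb R)$ (possible since $p\in\mathcal F_q$) gives $h\cdot p=(h\,h_\star)\cdot q\in\mathcal F_q$. Since $p$ is rational, Corollary~\ref{c23} (applied with $w_i=e$, so $\mathbf F_{w_i}=\mathbf F_e$, $\mathbf H_{w_i}=\mathbf H_e$) gives $p\in\mathbf H_e(\mathbb R)\cdot\mathbf F_e(\mathbb Q)$, hence $h\cdot p\in\mathbf H_e(\mathbb R)\cdot\mathbf F_e(\mathbb Q)$ is again rational; writing a $j$-rational expression $p=\tilde h\cdot\tilde p\cdot x_j\cdot\gamma$ with $\tilde h\in\mathbf H_e(\mathbb R)$, $\tilde p\in\mathbf P_0(\mathbb R)$ shows $h\cdot p=(h\tilde h)\cdot\tilde p\cdot x_j\cdot\gamma$ is $j$-rational. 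Next, because $\mathbf H_e=\mathbf H_{w_e}$ fixes every vector of $V_{\beta_0}$ (noted in \S\ref{pre}), $\rho(h)$ restricts to the identity on $V_{\beta_0}$, whence $\rho(h\cdot p)\mathbb Z^d\cap V_{\beta_0}=\rho(p)\mathbb Z^d\cap V_{\beta_0}$ and $d(h\cdot p)=d(p)\in[A_l,B_l]$. Finally $h\cdot p\in U$ is exactly the intersection imposed in the statement.

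It remains to produce a decomposition of $h\cdot p$ of the shape required in clause~(2). Write $p=a_p\cdot h_p\cdot m_p\cdot u_p\cdot x_j\cdot\gamma_p$ with $a_p\in\exp(\mathfrak a)$, $\pi_{\ker(\beta_0)}(a_p)\in K_1$, $h_p\in K_2$, $m_p\in K_3$, $u_p\in K_4$, $\gamma_p\in\Gamma$, and write $h=a_{-T}h'a_T$ with $h'\in B_{\mathbf H_e}(\theta_1)$. Commuting $h$ past $a_p$,
\[
h\cdot p=a_p\cdot\bigl[(a_p^{-1}ha_p)\,h_p\bigr]\cdot m_p\cdot u_p\cdot x_j\cdot\gamma_p ,
\]
which is of the desired form as soon as $(a_p^{-1}ha_p)h_p\in\tilde K_2=B_{\mathbf H_e}(\theta_2)\,K_2$, i.e. as soon as $a_p^{-1}ha_p\in B_{\mathbf H_e}(\theta_2)$. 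Since $\mathbf T(\mathbb R)$ is abelian and normalizes $\mathbf H_e$, writing $\log a_p=s_p\log a_1+k_p$ with $k_p=\pi_{\ker(\beta_0)}(a_p)\in K_1$ yields
\[
a_p^{-1}ha_p=a_{-(T+s_p)}\,\exp(-k_p)\,h'\,\exp(k_p)\,a_{T+s_p}\in\mathbf H_e(\mathbb R).
\]

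The crux is a uniform bound $|T+s_p|\le C_0$ with $C_0$ depending only on $\mathcal K$, $\mathbf G$ and $\{a_t\}_{t\in\mathbb R}$. Indeed, from the denominator formula $d(p)=c_{x_j}e^{\beta_0(a_p)\dim V_{\beta_0}}=c_{x_j}e^{s_p\beta_0(a_1)\dim V_{\beta_0}}$ established in \S\ref{pre}, the constraint $A_l\le d(p)\le B_l$, and the normalization $\beta_0(a_T)=T\beta_0(a_1)=-\ln l$, one sees that $(T+s_p)\beta_0(a_1)$ lies in an interval whose length depends only on $c_{x_j}$; since $\beta_0$ restricts to a nonzero functional on $\Lie(a_t)$ this bounds $|T+s_p|$ (this is precisely the assertion, recorded in \S\ref{counting}, that $\log(a_Ta_p)\in\mathfrak a_{I_0,K_1}$ with $I_0$ a fixed compact interval). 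Granting this, I would estimate the two conjugations in exponential coordinates on the unipotent group $\mathbf H_e$: conjugation by $\exp(k_p)$ with $k_p\in K_1$ acts as multiplication by $e^{\alpha(k_p)}$ on each root space $\mathfrak g_\alpha\subset\Lie\mathbf H_e$, hence distorts $B_{\mathbf H_e}(\theta_1)$ by a factor bounded in terms of $K_1$, and conjugation by $a_{T+s_p}$ distorts it by at most $e^{C_0\max_{\alpha\in\Phi(\mathbf H_e)}|\alpha(a_1)|}$; choosing $\theta_1$ small enough that these linear estimates in exponential coordinates pass back to the group, one obtains $a_p^{-1}ha_p\in B_{\mathbf H_e}(\theta_2)$ for $\theta_2$ a fixed multiple of $\theta_1$ depending only on the $K_i$'s, $\mathbf G$ and $\Gamma$, completing the verification of clause~(2) and hence the lemma. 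The only genuinely delicate point is this uniform bound on $|T+s_p|$, which is exactly what makes $\theta_1,\theta_2$ independent of $l$, $T$, $q$ and $U$; the rest is routine bookkeeping with the product decomposition $R_u(\overline{\mathbf P}_0)=\mathbf H_e\cdot\mathbf F_e$ and with unipotent exponential coordinates.
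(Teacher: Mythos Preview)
Your proof is correct and follows essentially the same approach as the paper: the heart of the matter is that $a_T a_p$ lies in the fixed compact set $\exp(\mathfrak a_{I_0,K_1})$, so conjugation by it sends $B_{\mathbf H_e}(\theta_1)$ into $B_{\mathbf H_e}(\theta_2)$. The paper's version is slightly more streamlined because it works directly with the characterization $a_T\cdot S_{K_1,K_2,K_3,K_4,j}(U,A_l,B_l)\Gamma=a_T\cdot U\Gamma/\Gamma\cap\exp(\mathfrak a_{I_0,K_1})K_2K_3K_4x_j\Gamma/\Gamma$ from \S\ref{counting}, so that all the side conditions (rationality, $j$-rationality, the denominator bound) come for free and only the $\tilde K_2$-inclusion needs checking; your explicit verification of each clause is equivalent but longer.
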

\begin{proof}
By the discussion in \S\ref{counting}, we know that $$a_T\cdot S_{K_1,K_2,K_3,K_4,j}(U,A_l,B_l)\Gamma=a_{T}\cdot U\Gamma/\Gamma\cap\exp(\mathfrak a_{I_0,K_1})K_2K_3K_4x_j\Gamma/\Gamma.$$ Choose $\theta_1,\theta_2>0$ sufficiently small so that for any $a\in\exp(\mathfrak a_{I_0,K_1})$ we have $$a^{-1}B_{\mathbf H_e}(\theta_1)a\subset B_{\mathbf H_e}(\theta_2).$$ Now for any $p\in\mathcal F_q\cap S_{K_1,K_2,K_3,K_4,j}(U,A_l, B_l)$, there exist $a\in\exp(\mathfrak a_{I_0,K_1})$, $h\in K_2$, $m\in K_3$, $u\in K_4$ and $\gamma\in\Gamma$ such that $$a_T\cdot p=a\cdot h\cdot m\cdot u\cdot x_j\cdot\gamma.$$ Then $$a_T\cdot B_{\mathbf H_e}(\theta_1,T)\cdot p=B_{\mathbf H_e}(\theta_1)\cdot a\cdot h\cdot m\cdot u\cdot x_j\cdot\gamma\subset\exp(\mathfrak a_{I_0,K_1})\tilde K_2K_3K_4x_j\Gamma$$ where $\tilde K_2=B_{\mathbf H_e}(\theta_2)\cdot K_2$. By definition, we have $$B_{\mathbf H_e}(\theta_1,T)\cdot p\cap U\subset S_{K_1,\tilde K_2,K_3,K_4,j}(U,A_l, B_l).$$ This completes the proof of the lemma.
\end{proof}

Let $X$ be a Riemannian manifold, $m$ a volume form on $X$ and $E$ a compact subset of $X$. Denote by $\diam(S)$ the diameter of a set $S\subset X$. A collection $\mathcal A$ of compact subsets of $E$ is said to be tree-like if $\mathcal A$ is the union of finite sub-collections $\mathcal A_k$ $(k\in\mathbb N)$ such that\vspace{2mm}
\begin{enumerate}
\item $\mathcal A_0=\left\{E\right\}$;\vspace{2mm}
\item For any $k$ and $S_1,S_2\in\mathcal A_k$, either $S_1=S_2$ or $S_1\cap S_2=\emptyset$;\vspace{2mm}
\item For any $k$ and $S_1\in\mathcal A_{k+1}$, there exists $S_2\in\mathcal A_k$ such that $S_1\subset S_2$; \vspace{2mm}
\item $d_k(\mathcal A):=\sup_{S\in\mathcal A_k}\diam(S)\to0$ as $k\to\infty$.\vspace{2mm}
\end{enumerate}
We write $\mathbf A_k:=\bigcup_{A\in\mathcal A_k}A$ and define a Cantor-type subset $\mathbf A_\infty:=\bigcap_{k\in\mathbb N}\mathbf A_k$. We also define $$\Delta_k(\mathcal A):=\inf_{S\in\mathcal A_k}\frac{m(\mathbf A_{k+1}\cap S)}{m(S)}.$$ We will use the following result to compute the lower bound of $\dim_HS_\rho(\psi)^c$ in Theorem~\ref{mthm11}.

\begin{theorem}[\cite{KM96,M87,U91}]\label{thm43}
Let $(X,m)$ be a Riemannian manifold, where $m$ is the volume form on $X$. Then for any tree-like collection $\mathcal A$ of subsets of $E$ $$\dim_H(\mathbf A_\infty)\geq\dim_{H} X-\limsup_{k\to\infty}\frac{\sum_{i=0}^k\log(\Delta_i(\mathcal A))}{\log(d_{k+1}(\mathcal A))}.$$ 
\end{theorem}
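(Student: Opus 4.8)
\medskip
\noindent\textbf{Proof strategy.} The statement is the standard tree-like (Moran-type) lower bound for Hausdorff dimension, and my plan is to deduce it from the mass distribution principle: it suffices to construct a Borel probability measure $\mu$ carried by $\mathbf A_\infty$ for which, for every $x\in\mathbf A_\infty$ and all small $r>0$,
\[
\frac{\log\mu(B(x,r))}{\log r}\ \ge\ \dim_H X-\limsup_{k\to\infty}\frac{\sum_{i=0}^{k}\log\Delta_i(\mathcal A)}{\log d_{k+1}(\mathcal A)}+o(1),
\]
since then $\dim_H\mathbf A_\infty$ is bounded below by the displayed quantity. I would build $\mu$ inductively down the tree: put $\mu(E)=1$, and once $\mu(S)$ has been assigned for $S\in\mathcal A_k$, distribute its mass among the children $S'\in\mathcal A_{k+1}$ with $S'\subset S$ proportionally to the volume $m$, namely $\mu(S'):=\mu(S)\,m(S')/m(\mathbf A_{k+1}\cap S)$. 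Properties (2)--(3) of a tree-like collection make this consistent (the children of $S$ are pairwise disjoint with union $\mathbf A_{k+1}\cap S$), and property (4) together with the compactness of the cylinders $S\cap\mathbf A_\infty$ allow one to extend $\mu$, by a routine Carath\'eodory argument, to a Borel probability measure supported on $\mathbf A_\infty=\bigcap_k\mathbf A_k$.

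Unwinding the recursion along the unique descending chain $E=S_0\supset S_1\supset\cdots\supset S_k=S$ determined by $S\in\mathcal A_k$, and using $m(\mathbf A_{i+1}\cap S_i)\ge\Delta_i(\mathcal A)\,m(S_i)$ followed by telescoping, one gets the per-set bound
\[
\mu(S)\ =\ \prod_{i=0}^{k-1}\frac{m(S_{i+1})}{m(\mathbf A_{i+1}\cap S_i)}\ \le\ \frac{m(S)}{m(E)}\prod_{i=0}^{k-1}\Delta_i(\mathcal A)^{-1}\qquad(S\in\mathcal A_k).
\]
Next, fix $x\in\mathbf A_\infty$ and small $r>0$, and let $k=k(r)$ be (essentially) the least generation for which every member of $\mathcal A_{k+1}$ has diameter $<r$; one checks $k(r)\to\infty$ as $r\to0$. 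Since $\mu$ lives on $\mathbf A_{k+1}$, only those $S'\in\mathcal A_{k+1}$ meeting $B(x,r)$ contribute; each such $S'$ has diameter $<r$, hence $S'\subset B(x,2r)$, and as they are pairwise disjoint, $\sum_{S'\cap B(x,r)\neq\emptyset}m(S')\le m(B(x,2r))\lesssim r^{\dim_H X}$ -- here compactness of $E$ enters, so that $m$ is uniformly comparable to Euclidean volume and $m(B(x,\rho))\sim\rho^{\dim_H X}$. Combining with the per-set bound,
\[
\mu(B(x,r))\ \le\ \sum_{S'\cap B(x,r)\neq\emptyset}\mu(S')\ \le\ \frac{1}{m(E)}\Bigl(\prod_{i=0}^{k(r)}\Delta_i(\mathcal A)^{-1}\Bigr)\sum_{S'\cap B(x,r)\neq\emptyset}m(S')\ \lesssim\ r^{\dim_H X}\prod_{i=0}^{k(r)}\Delta_i(\mathcal A)^{-1}.
\]

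Taking logarithms and dividing by $\log r<0$ gives $\dfrac{\log\mu(B(x,r))}{\log r}\ge\dim_H X-\dfrac{\sum_{i=0}^{k(r)}\log\Delta_i(\mathcal A)^{-1}+O(1)}{-\log r}$; a careful comparison of the scale $-\log r$ with $-\log d_{k(r)+1}(\mathcal A)$ (the two being linked by the choice of $k(r)$) then yields the displayed inequality above for every $x\in\mathbf A_\infty$, and the mass distribution principle gives $\dim_H\mathbf A_\infty\ge\dim_H X-\limsup_{k\to\infty}\bigl(\sum_{i=0}^{k}\log\Delta_i(\mathcal A)\bigr)/\log d_{k+1}(\mathcal A)$, as claimed.

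I expect the main obstacle to be the step bounding $\mu(B(x,r))$: a tree-like collection provides \emph{no separation between, and no lower bound on the sizes of, distinct sets of a fixed generation}, so $B(x,r)$ can a priori meet uncontrollably many sets of $\mathcal A_{k+1}$. The device that makes the argument work is to bound the $\mu$-mass of their union through their \emph{total} $m$-volume, which is $\le m(B(x,2r))\sim r^{\dim_H X}$, rather than estimating set by set -- and this is exactly where the compactness of $E$ and the uniform comparison of $m$ with Lebesgue measure are needed. A secondary, purely bookkeeping nuisance is that the diameters $d_k(\mathcal A)$ need not be monotone, so one has to choose the generation $k(r)$ via a non-increasing majorant of the $d_k$'s and verify that this substitution -- and the passage from $-\log r$ to $-\log d_{k(r)+1}(\mathcal A)$ -- does not affect the relevant $\limsup$; this is dispatched using the monotonicity of $r\mapsto\mu(B(x,r))$ to move between comparable radii.
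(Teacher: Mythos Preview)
The paper does not give its own proof of Theorem~4.3; it is quoted as a known result from \cite{KM96,M87,U91} and then applied as a black box in the proof of Theorem~\ref{mthm11}. So there is no ``paper's proof'' to compare with. Your outline is precisely the standard argument behind those references (in particular \cite[\S5]{KM96}): build a probability measure on $\mathbf A_\infty$ by pushing mass down the tree proportionally to $m$, obtain the cylinder bound $\mu(S)\le m(S)\,m(E)^{-1}\prod_{i<k}\Delta_i^{-1}$ for $S\in\mathcal A_k$, and control $\mu(B(x,r))$ by summing over the level-$(k{+}1)$ sets meeting $B(x,r)$ and bounding their total volume by $m(B(x,2r))\asymp r^{\dim X}$. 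Your identification of the key device---estimating the \emph{total} $m$-volume rather than counting sets---is exactly the point.

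One caveat. The step you call ``purely bookkeeping'' (matching $-\log r$ with $-\log d_{k(r)+1}$) is more than bookkeeping and is in fact where the argument is delicate. When $r$ lies in a large gap $(d_{k+1},d_k)$, the inequality $-\log r<-\log d_{k+1}$ goes the wrong way, so the level-$(k{+}1)$ bound alone does not yield $\mu(B(x,r))\lesssim r^{\dim X-L-\epsilon}$; and the level-$k$ bound via $\mu(B(x,r))\le\mu(B(x,d_k))$ can also be too weak when $r$ is near $d_{k+1}$. In the cited sources this is handled either by additional regularity of the tree (so that $\log d_k/\log d_{k+1}\to1$, which is \emph{not} the regime of the present paper since the authors take $l_{k+1}\gg l_k$) or, more robustly, by interpolating between the two bounds and tracking the worst case across the gap; this is what actually justifies the appearance of $d_{k+1}$ rather than $d_k$ in the denominator. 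Your sketch is correct in spirit, but you should not expect that last step to be a one-line remark.
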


\begin{proof}[Proof of Theorem~\ref{mthm11} assuming that $0\leq\tau(\psi)<\beta_0(a_{-1})$ and the flow $\{a_t\}_{t\in\mathbb R}$ is mixing]
We first get a lower bound for the Hausdorff dimension of $S_\rho(\psi)^c\cap R_u(\overline{\mathbf P}_0)(\mathbb R)$. We fix some compact subsets $K_1\subset\ker(\beta_0),K_2\subset\mathbf H_e(\mathbb R),K_3\subset\mathbf M_a(\mathbb R)\cap\mathbf G(\mathbb R)^0$ and $K_4\subset R_u(\mathbf P_0)(\mathbb R)$ as in the begining of this section, and also fix some index $j\in J$ with $x_j\in\mathcal K\cap\mathbf G(\mathbb R)^0$. Assume that $0\leq\tau=\tau(\psi)<\beta_0(a_{-1})$. Let $\epsilon>0$ be a sufficiently small number such that $\tau+\epsilon<\beta_0(a_{-1})$. Let $\nu_0$ be any $\mathbb Q$-root relative to $\mathbf T$ in $R_u(\overline{\mathbf P}_0)$ such that $$\nu_0(a_1)=\max\{\alpha(a_1):\alpha\in\Phi(R_u(\overline{\mathbf P}_0))\}.$$ Since the unstable horospherical subgroup of $\{a_t\}_{t\in\mathbb R}$ is contained in $R_u(\overline{\mathbf P}_0)$, we have $\nu_0(a_1)>0$.

In the following, we construct a tree-like collection $\mathcal A=\{\mathcal A_k\}_{k\in\mathbb N}$ of compact subsets in $R_u(\overline{\mathbf P}_0)(\mathbb R)$. We start with a small open bounded box $U_0$ in $R_u(\overline{\mathbf P}_0)(\mathbb R)$ which projects injectively into $R_u(\overline{\mathbf P}_0)(\mathbb R)/(R_u(\overline{\mathbf P}_0)(\mathbb R)\cap\Gamma)$.

For $k=0$, we set $\mathcal A_0=\left\{U_0\right\}$. 

For $k=1$, we choose sufficiently large numbers $l_1>0$ and $T_1>0$ such that $$\beta_0(a_{T_1})=-\ln l_1.$$ By Proposition~\ref{p33}, we know that $$m_{\mathbf H_e}(S_{K_1,K_2,K_3,K_4,j}(U_0,A_{l_1}, B_{l_1}))\asymp l_1^{-\sum_{\alpha\in\Phi(\mathbf F_e)}\alpha(a_1)/\beta_0(a_1)}\cdot\mu_{R_u(\overline{\mathbf P}_0)}(U_0)$$ and $$m_{\mathbf H_e}(S_{K_1,\tilde K_2,K_3,K_4,j}(U_0,A_{l_1}, B_{l_1}))\asymp l_1^{-\sum_{\alpha\in\Phi(\mathbf F_e)}\alpha(a_1)/\beta_0(a_1)}\cdot\mu_{R_u(\overline{\mathbf P}_0)}(U_0)$$ where $\tilde K_2=B_{\mathbf H_e}(\theta_2)\cdot K_2$ for some $\theta_2>0$ defined in Lemma~\ref{l41}. Now for any leaf $\mathcal F_q=\mathbf H_e(\mathbb R)\cdot q$ ($q\in\mathbf F_e(\mathbb Q)$) in the foliation $\mathcal F_{\mathbf H_e}$ such that $$\mathcal F_q\cap S_{K_1,K_2,K_3,K_4,j}(U_0,A_{l_1}, B_{l_1})\neq\emptyset,$$ we divide the region $\mathcal F_q\cap U_0$ into small cubes of side length $$\theta_1\cdot\exp(-\nu_{0}(a_{T_1}))/10$$ where $\theta_1>0$ is the constant defined in Lemma~\ref{l41}. Then we collect those cubes $R\subset\mathcal F_q\cap U_0$ which intersect $S_{K_1,K_2,K_3,K_4,j}(U_0,A_{l_1}, B_{l_1})$, and denote the corresponding collection by $$\mathcal G_{1,q}:=\{R: R\subset\mathcal F_q\cap U_0,\;R\cap S_{K_1,K_2,K_3,K_4,j}(U_0,A_{l_1}, B_{l_1})\neq\emptyset\}.$$ Note that $\theta_1\cdot\exp(-\nu_0(a_{T_1}))/10$ is smaller than the minimum side length of the rectangle $B_{\mathbf H_e}(\theta_1, T_1)$, so by Lemma~\ref{l41}, we know that each cube in $\mathcal G_{1,q}$ is contained in $$S_{K_1,\tilde K_2,K_3,K_4,j}(U_0,A_{l_1}, B_{l_1})$$ where $\tilde K_2=B_{\mathbf H_e}(\theta_2)\cdot K_2$. For any leaf $\mathcal F_q=\mathbf H_e(\mathbb R)\cdot q$ $(q\in\mathbf F_e(\mathbb Q))$ in $\mathcal F_{\mathbf H_e}$ such that $$\mathcal F_q\cap S_{K_1,K_2,K_3,K_4,j}(U_0,A_{l_1}, B_{l_1})=\emptyset,$$ we set $\mathcal G_{1,q}=\emptyset$. 

Now let $$\mathcal H_1=\bigcup_{q\in\mathbf F_e(\mathbb Q)}\bigcup_{R\in\mathcal G_{1,q}} R.$$ Then we have 
\begin{align*}
& l_1^{-\sum_{\alpha\in\Phi(\mathbf F_e)}\alpha(a_1)/\beta_0(a_1)}\cdot\mu_{R_u(\overline{\mathbf P}_0)}(U_0)\\
&\asymp m_{\mathbf H_e}(S_{K_1,K_2,K_3,K_4,j}(U_0,A_{l_1}, B_{l_1}))\\
&\leq m_{\mathbf H_e}(\bigcup_{q\in\mathbf F_e(\mathbb Q)}\bigcup_{R\in\mathcal G_{1,q}} R )\\
&\leq m_{\mathbf H_e}(S_{K_1,\tilde K_2,K_3,K_4,j}(U_0,A_{l_1}, B_{l_1}))\\
&\asymp l_1^{-\sum_{\alpha\in\Phi(\mathbf F_e)}\alpha(a_1)/\beta_0(a_1)}\cdot\mu_{R_u(\overline{\mathbf P}_0)}(U_0)
\end{align*}
and hence $$m_{\mathbf H_e}(\mathcal H_1 )\asymp l_1^{-\sum_{\alpha\in\Phi(\mathbf F_e)}\alpha(a_1)/\beta_0(a_1)}\cdot\mu_{R_u(\overline{\mathbf P}_0)}(U_0).$$ Note that each cube $R$ in $\mathcal H_1$ is contained in $S_{K_1,\tilde K_2,K_3,K_4,j}(U_0,A_{l_1}, B_{l_1})$. Let $$t_1=\frac{\ln l_1}{\beta_0(a_{-1})-(\tau+\epsilon)}.$$ By the computations in \S\ref{counting}, we can choose a sufficiently small number $\tilde\delta_0>0$ such that $$B_{\mathbf F_e}(\tilde\delta_0)\times\exp(\mathfrak a_{I_0,K_1})\tilde K_2K_3K_4x_j\Gamma/\Gamma\to B_{\mathbf F_e}(\tilde\delta_0)\exp(\mathfrak a_{I_0,K_1})\tilde K_2K_3K_4x_j\Gamma/\Gamma$$ is a homeomorphism, and the subsets in the collection 
\begin{align*}
\mathcal F_1(U_0):=\left\{\left(a_{-t_1}\cdot B_{\mathbf F_e}(\tilde{\delta}_0)\cdot a_{t_1}\right)\cdot q: q\in\mathcal H_1\right\}
\end{align*}
are disjoint by this homeomorphism as $$a_{-t_1}\cdot B_{\mathbf F_e}(\tilde{\delta}_0)\cdot a_{t_1}\subset B_{\mathbf F_e}(\tilde\delta_0,T_1)\textup{ and }a_{T_1}\cdot q\Gamma\in\exp(\mathfrak a_{K_1, I_0})\tilde K_2K_3K_4x_j\Gamma/\Gamma\;(\forall q\in\mathcal H_1).$$ We write $$\Phi(\mathbf F_e)=\Phi^0(\mathbf F_e)\cup\Phi^1(\mathbf F_e)$$ where $$\Phi^0(\mathbf F_e)=\{\alpha\in\Phi(\mathbf F_e):\alpha(a_1)=0\}\textup{ and }\Phi^1(\mathbf F_e)=\{\alpha\in\Phi(\mathbf F_e):\alpha(a_1)\neq0\}.$$ We denote by $$\mathcal P_1=\bigcup_{E\in\mathcal F_1(U_0)} E.$$ Now we can divide the subset $\mathcal P_1$ into cubes of side length $$\tilde\delta_0\cdot l_1^{\frac{\nu_0(a_{-1})}{\beta_0(a_{-1})-(\tau+\epsilon)}}$$ which is smaller than $\theta_1\cdot\exp(-\nu_0(a_{T_1}))/10$ and the minimum side length of the rectangle $$a_{-t_1}\cdot B_{\mathbf F_e}(\tilde{\delta}_0)\cdot a_{t_1}$$ if $T_1>0$ and $l_1>0$ are sufficiently large. Note that the set $\Phi^0(\mathbf F_e)$ may not be empty and the diameter of the set $$a_{-t_1}\cdot B_{\mathbf F_e}(\tilde{\delta}_0)\cdot a_{t_1}$$ may be larger than the diameter of $U_0$, so some cubes we obtain here (by dividing the subset $\mathcal P_1$) may be outside the set $U_0$. For our purpose, we collect only those cubes which are inside the subset $U_0$. In this manner, we obtain a family of disjoint cubes inside $U_0$ constructed from $\mathcal P_1$, which we denote by $\mathcal A_1$. 

We remark here that if $\Phi^0(\mathbf F_e)=\emptyset$, then all the cubes we obtain by dividing $\mathcal P_1$ are inside the subset $U_0$ if $T_1>0$ and $l_1>0$ are chosen to be sufficiently large (and also if we enlarge $U_0$ slightly to avoid the complexity caused by the boundary of $U_0$). We will see later that in the computations there are no significant differences between the case $\Phi^0(\mathbf F_e)\neq\emptyset$ and the case $\Phi^0(\mathbf F_e)=\emptyset$. Indeed, when we apply Theorem~\ref{thm43} and calculate $\Delta_i(\mathcal A)$ and $d_i(\mathcal A)$, the differences between these two cases may affect the value of the formula in finite steps, but eventually when we take the limit, these differences will disappear since we keep choosing sufficiently large numbers $T_1,l_1,T_2,l_2,T_3,l_3,\dots$ to offset the effects by the differences in the early stages.

In a similar way, we can construct $\mathcal A_k$ for any $k\in\mathbb N$ inductively. For $k>1$ we choose sufficiently large numbers $l_k>0$ and $T_k>0$ such that $\beta_0(a_{T_{k}})=-\ln l_k$. For each cube $S\in\mathcal A_{k-1}$, by Proposition~\ref{p33}, we know that $$m_{\mathbf H_e}(S_{K_1,K_2,K_3,K_4,j}(S,A_{l_k}, B_{l_k}))\asymp l_k^{-\sum_{\alpha\in\Phi(\mathbf F_e)}\alpha(a_1)/\beta_0(a_1)}\cdot\mu_{R_u(\overline{\mathbf P}_0)}(S)$$ and $$m_{\mathbf H_e}(S_{K_1,\tilde K_2,K_3,K_4,j}(S,A_{l_k}, B_{l_k}))\asymp l_k^{-\sum_{\alpha\in\Phi(\mathbf F_e)}\alpha(a_1)/\beta_0(a_1)}\cdot\mu_{R_u(\overline{\mathbf P}_0)}(S)$$ where $\tilde K_2=B_{\mathbf H_e}(\theta_2)\cdot K_2$ for $\theta_2>0$ as defined in Lemma~\ref{l41}. Now for any leaf $\mathcal F_q=\mathbf H_e(\mathbb R)\cdot q$ ($q\in\mathbf F_e(\mathbb Q)$) in the foliation $\mathcal F_{\mathbf H_e}$ such that $$\mathcal F_q\cap S_{K_1,K_2,K_3,K_4,j}(S,A_{l_k}, B_{l_k})\neq\emptyset,$$ we divide the region $\mathcal F_q\cap S$ into small cubes of side length $$\theta_1\cdot\exp(-\nu_0(a_{T_k}))/10$$ where $\theta_1>0$ is the constant defined in Lemma~\ref{l41}. Then we collect those cubes $R\subset\mathcal F_q\cap S$ which intersect $S_{K_1,K_2,K_3,K_4,j}(S,A_{l_k}, B_{l_k})$, and denote the corresponding collection by $$\mathcal G_{k,q,S}=\{R: R\subset\mathcal F_q\cap S,\; R\cap S_{K_1,K_2,K_3,K_4,j}(S,A_{l_k}, B_{l_k})\neq\emptyset\}.$$ Note that $\theta_1\cdot\exp(-\nu_0(a_{T_k}))/10$ is smaller than the minimum side length of the rectangle $B_{\mathbf H_e}(\theta_1, T_k)$, so by Lemma~\ref{l41}, we know that each cube in $\mathcal G_{k,q,S}$ is contained in $$S_{K_1,\tilde K_2,K_3,K_4,j}(S,A_{l_k}, B_{l_k})$$ where $\tilde K_2=B_{\mathbf H_e}(\theta_2)\cdot K_2$. For any leaf $\mathcal F_q=\mathbf H_e(\mathbb R)\cdot q$ $(q\in\mathbf F_e(\mathbb Q))$ in $\mathcal F_{\mathbf H_e}$ such that $$\mathcal F_q\cap S_{K_1,K_2,K_3,K_4,j}(S,A_{l_k}, B_{l_k})=\emptyset,$$ we set $\mathcal G_{k,q,S}=\emptyset$.

Let $$\mathcal H_{k,S}=\bigcup_{q\in\mathbf F_e(\mathbb Q)}\bigcup_{R\in\mathcal G_{k,q,S}} R.$$ Then we have 
\begin{align*}
& l_k^{-\sum_{\alpha\in\Phi(\mathbf F_e)}\alpha(a_1)/\beta_0(a_1)}\cdot\mu_{R_u(\overline{\mathbf P}_0)}(S)\\
&\asymp m_{\mathbf H_e}(S_{K_1,K_2,K_3,K_4,j}(S,A_{l_k}, B_{l_k}))\\
&\leq m_{\mathbf H_e}(\bigcup_{q\in\mathbf F_e(\mathbb Q)}\bigcup_{R\in\mathcal G_{k,q,S}} R )\\
&\leq m_{\mathbf H_e}(S_{K_1,\tilde K_2,K_3,K_4,j}(S,A_{l_k}, B_{l_k}))\\
&\asymp l_k^{-\sum_{\alpha\in\Phi(\mathbf F_e)}\alpha(a_1)/\beta_0(a_1)}\cdot\mu_{R_u(\overline{\mathbf P}_0)}(S)
\end{align*}
and hence $$ m_{\mathbf H_e}(\mathcal H_{k,S})\asymp l_k^{-\sum_{\alpha\in\Phi(\mathbf F_e)}\alpha(a_1)/\beta_0(a_1)}\cdot\mu_{R_u(\overline{\mathbf P}_0)}(S).$$ Note that each cube in $\mathcal H_{k,S}$ is contained in $S_{K_1,\tilde K_2,K_3,K_4,j}(S,A_{l_k}, B_{l_k})$. Let $$t_k=\frac{\ln l_k}{\beta_0(a_{-1})-(\tau+\epsilon)}.$$ By the computations in \S\ref{counting}, we can choose a sufficiently small number $\tilde\delta_0>0$ such that $$B_{\mathbf F_e}(\tilde\delta_0)\times\exp(\mathfrak a_{I_0,K_1})\tilde K_2K_3K_4x_j\Gamma/\Gamma\to B_{\mathbf F_e}(\tilde\delta_0)\exp(\mathfrak a_{I_0,K_1})\tilde K_2K_3K_4x_j\Gamma/\Gamma$$ is a homeomorphism, and the subsets in the collection 
\begin{align*}
\mathcal F_{k}(S):=\left\{(a_{-t_k}\cdot B_{\mathbf F_e}(\tilde{\delta}_0)\cdot a_{t_k})\cdot q: q\in\mathcal H_{k,S}\right\}
\end{align*}
are disjoint by this homeomorphism as $$a_{-t_k}\cdot B_{\mathbf F_e}(\tilde{\delta}_0)\cdot a_{t_k}\subset B_{\mathbf F_e}(\tilde\delta_0,T_k)\textup{ and }a_{T_k}\cdot q\Gamma\in\exp(\mathfrak a_{I_0,K_1})\tilde K_2K_3K_4x_j\Gamma/\Gamma\;(\forall q\in\mathcal H_{k,S}).$$ We denote by $$\mathcal P_{k,S}=\bigcup_{E\in\mathcal F_k(S)} E.$$ Now we can divide the subset $\mathcal P_{k,S}$ into cubes of side length $$\tilde\delta_0\cdot l_k^{\frac{\nu_0(a_{-1})}{\beta_0(a_{-1})-(\tau+\epsilon)}}$$ which is smaller than $\theta_1\cdot\exp(-\nu_0(a_{T_k}))/10$ and the minimum side length of the rectangle $$a_{-t_k}\cdot B_{\mathbf F_e}(\tilde{\delta}_0)\cdot a_{t_k}$$ if $T_k>0$ and $l_k>0$ are sufficiently large. Note that $\Phi^0(\mathbf F_e)$ may not be empty, and some of the cubes in $\mathcal P_{k,S}$ may be outside $S$, and here we collect only those cubes in $\mathcal P_{k,S}$ which are inside $S$. Thus, we obtain a family of disjoint cubes inside $S$ constructed from $\mathcal P_{k,S}$. We denote by $\mathcal A_k$ the collection of all the disjoint cubes inside $S$ constructed from $\mathcal P_{k,S}$ where $S$ ranges over all elements in $\mathcal A_{k-1}$. 

In this manner, we obtain an increasing sequence of sufficiently large numbers $\left\{l_k\right\}_{k\in\mathbb N}$ and a tree-like collection $\mathcal A=\left\{\mathcal A_k\right\}_{k\in\mathbb N}$ of cubes in $U_0$. Using the notation in Theorem~\ref{thm43}, we have $$d_k(\mathcal A)\asymp l_k^{\frac{\nu_0(a_{-1})}{\beta_0(a_{-1})-(\tau+\epsilon)}}$$ where $d_k(\mathcal A)$ is the diameter of the family $\mathcal A_k$. Moreover, one can compute that
$$\begin{cases} 
\Delta_k(\mathcal A)\asymp  l_{k+1}^{-\sum_{\alpha\in\Phi(\mathbf F_e)}\frac{\alpha(a_1)}{\beta_0(a_1)}}\cdot\prod_{\alpha\in\Phi(\mathbf F_e)} l_{k+1}^{\frac{\alpha(a_{-1})}{\beta_0(a_{-1})-(\tau+\epsilon)}}, & \Phi^0(\mathbf F_e)=\emptyset\\ 
\Delta_k(\mathcal A)\asymp  l_{k+1}^{-\sum_{\alpha\in\Phi(\mathbf F_e)}\frac{\alpha(a_1)}{\beta_0(a_1)}}\cdot\prod_{\alpha\in\Phi^1(\mathbf F_e)} l_{k+1}^{\frac{\alpha(a_{-1})}{\beta_0(a_{-1})-(\tau+\epsilon)}}\cdot\prod_{\alpha\in\Phi^0(\mathbf F_e)} l_k^{\frac{\nu_0(a_{-1})}{\beta_0(a_{-1})-(\tau+\epsilon)}}, & \Phi^0(\mathbf F_e)\neq\emptyset.
\end{cases}
$$
Now let $\mathbf A_\infty=\bigcap_{k\in\mathbb N}\mathbf A_k$. By Theorem~\ref{thm43}, we can compute (assuming that $l_{k+1}$ is much larger than $l_k$ for any $k\in\mathbb N$) that 
\begin{align*}\dim_H(\mathbf A_\infty)\geq& \dim_{H} X-\limsup_{k\to\infty}\frac{\sum_{i=0}^k\log(\Delta_i(\mathcal A))}{\log(d_{k+1}(\mathcal A))} \\
=&\dim R_u(\overline{\mathbf P}_0)-\sum_{\alpha\in\Phi^1(\mathbf F_e)}\frac{\frac{\alpha(a_1)}{\beta_0(a_{-1})-(\tau+\epsilon)}-\frac{\alpha(a_1)}{\beta_0(a_{-1})}}{\frac{\nu_0(a_1)}{\beta_0(a_{-1})-(\tau+\epsilon)}}\\
=&\dim R_u(\overline{\mathbf P}_0)-\sum_{\alpha\in\Phi(\mathbf F_e)}\frac{\alpha(a_1)(\tau+\epsilon)}{\beta_0(a_{-1})\nu_0(a_1)}.
\end{align*}

By the definition of $\tau=\tau(\psi)$, there exists a divergent sequence $\{s_k\}_{k\in\mathbb N}\subset\mathbb R_+$ such that $$\tau=\tau(\psi)=\lim_{k\to\infty}\left(-\frac{\ln\psi(s_k)}{s_k}\right).$$ Now we choose the sequence $\{l_k\}_{k\in\mathbb N}$ in the construction of $\mathbf A_\infty$ so that the sequence $\{t_k\}_{k\in\mathbb N}$ equals the sequence $\{s_k\}_{k\in\mathbb N}$ defined as above. Then we can prove the following

\begin{lemma}\label{l44}
We have $\mathbf A_\infty\subset S_\rho(\psi)^c\cap R_u(\overline{\mathbf P}_0)(\mathbb R)$.
\end{lemma}
\begin{proof}
By the construction, for any $p\in\mathbf A_\infty$, there exists a sequence of rational elements $$q_k\in S_{K_1,K_2,K_3,K_4,j}(S_{k-1},A_{l_k},B_{l_k}),\quad S_{k-1}\in\mathcal A_{k-1}$$ such that $$p\in(a_{-t_k}\cdot B_{\mathbf F_e}(\tilde{\delta}_0)\cdot a_{t_k})\cdot q_k$$ where $t_k=\frac{\ln l_k}{\beta_0(a_{-1})-(\tau+\epsilon)}$. Then
\begin{align*}
a_{t_k}\cdot p\in B_{\mathbf F_e}(\tilde\delta_0)\cdot (a_{t_k}\cdot q_k).
\end{align*}
Note that by definition, the height of the rational element $a_{t_k}\cdot q_k$ is equal to the co-volume of the lattice $\rho(a_{t_k}\cdot q_k)\cdot\mathbb Z^d\cap V_{\beta_0}$ in $V_{\beta_0}$ and $$\height(a_{t_k}\cdot q_k)=e^{\beta_0(a_{t_k})\cdot\dim V_{\beta_0}}\cdot d(q_k).$$ By Lemma~\ref{l26}, we deduce that for any $k\in\mathbb N$
\begin{align*}
\delta(\rho(a_{t_k}\cdot p)\mathbb Z^d)&\asymp\delta(\rho(a_{t_k}\cdot q_k)\mathbb Z^d)\lesssim \height(a_{t_k}\cdot q_k)^{\frac1{\dim V_{\beta_0}}}\\
&\lesssim e^{\beta_0(a_{t_k})}\cdot l_k\leq e^{-(\tau+\epsilon)t_k}.
\end{align*}
Note that by our choice of $\{t_k\}_{k\in\mathbb N}$, for $\epsilon>0$ and for sufficiently large $k\in\mathbb N$, $$-\frac{\ln\psi(t_k)}{t_k}\leq\tau+\frac\epsilon2\implies\psi(t_k)\geq e^{-(\tau+\epsilon/2)t_k}.$$
So we have
\begin{align*}
\delta(\rho(a_{t_k}\cdot p)\mathbb Z^d)&\lesssim e^{-(\tau+\epsilon)t_k}\leq\psi(t_k)\cdot e^{-\frac\epsilon2 t_k}
\end{align*}
and by definition $p\in S_\rho(\psi)^c\cap R_u(\overline{\mathbf P}_0)(\mathbb R)$. This completes the proof of the lemma.
\end{proof}

By Lemma~\ref{l44} and the computation for $\dim_H\mathbf A_\infty$, we have $$\dim_H(S_\rho(\psi)^c\cap R_u(\overline{\mathbf P}_0)(\mathbb R))\geq\dim R_u(\overline{\mathbf P}_0)(\mathbb R)-\sum_{\alpha\in\Phi(\mathbf F_e)}\frac{\alpha(a_1)(\tau+\epsilon)}{\beta_0(a_{-1})\nu_0(a_1)}.$$ By taking $\epsilon\to0$, we obtain
\begin{align*}
\dim_H(S_\rho(\psi)^c\cap R_u(\overline{\mathbf P}_0)(\mathbb R))\geq&\dim R_u(\overline{\mathbf P}_0)(\mathbb R)-\sum_{\alpha\in\Phi(\mathbf F_e)}\frac{\alpha(a_1)\tau}{\beta_0(a_{-1})\nu_0(a_1)}.
\end{align*}
By definition, if an element $g\in\mathbf G(\mathbb R)$ belongs to $S_\rho(\psi)^c$, then for any $h\in\mathbf P_0(\mathbb R)$, $h\cdot g$ also belongs to $S_\rho(\psi)^c$.
Using the same argument as in \cite[\S10]{FZ22}, we can conclude that
\begin{align*}
\dim_H(S_\rho(\psi)^c)\geq&\dim\mathbf P_0(\mathbb R)+\dim_H(S_\rho(\psi)^c\cap R_u(\overline{\mathbf P}_0)(\mathbb R))\\
\geq&\dim\mathbf G(\mathbb R)-\sum_{\alpha\in\Phi(\mathbf F_e)}\frac{\alpha(a_1)\tau}{\beta_0(a_{-1})\nu_0(a_1)}.
\end{align*}
This completes the proof of Theorem~\ref{mthm11} in the case where $0\leq\tau(\psi)<\beta_0(a_{-1})$ and $\{a_t\}_{t\in\mathbb R}$ is mixing on $\mathbf G(\mathbb R)^0/\Gamma$.
\end{proof}

\section{Proof of Theorem~\ref{mthm11}: The other cases}\label{lbd2}
In this section, we discuss the case where $0\leq\tau(\psi)<\beta_0(a_{-1})$ and the action of $\{a_t\}_{t\in\mathbb R}$ on $\mathbf G(\mathbb R)^0/\Gamma$ is not mixing, and explain how to modify the arguments in \S\ref{counting} and \S\ref{lbd1} to prove Theorem~\ref{mthm11}. We study the case $\tau(\psi)=\beta_0(a_{-1})$ at the end of this section.

We first discuss the ergodic properties of group actions on homogeneous spaces, for which one may refer to \cite{M91,M15,R72}. Let $\mathbf G_i$ $(1\leq i\leq k)$ be the $\mathbb Q$-simple factors of $\mathbf G$. Then $\mathbf G$ is an almost direct product of $\mathbf G_i$ $(1\leq i\leq k)$. Without loss of generality, we may assume that $\{a_t\}_{t\in\mathbb R}$ projects nontrivially into $\mathbf G_i(\mathbb R)^0$ ($1\leq i\leq s$) for some $s<k$. It is known that for each $1\leq i\leq k$, any arithmetic lattice $\Gamma_i$ inside $\mathbf G_i(\mathbb Z)\cap\mathbf G_i(\mathbb R)^0$ is irreducible in $\mathbf G_i(\mathbb R)^0$ (and we fix such an arithmetic lattice $\Gamma_i$ for later use). Moreover, since $\{a_t\}_{t\in\mathbb R}$ projects nontrivially into $\mathbf G_i(\mathbb R)^0$ $(1\leq i\leq s)$, we have $\mathbf T\cap\mathbf G_i\neq\{e\}$ $(1\leq i\leq s)$. So $\mathbf G_i$ is $\mathbf Q$-isotropic, and $\mathbf G_i(\mathbb R)^0/\Gamma_i$ is not compact. By Godement compactness criterion (Cf.~\cite[Theorem 11.6]{BH62}) and \cite[6.21]{BT65}, every simple factor of $\mathbf G_i(\mathbb R)^0$ is not compact. Let $\{a_t^{i}\}_{t\in\mathbb R}$ be the projection of $\{a_t\}_{t\in\mathbb R}$ in $\mathbf G_i(\mathbb R)^0$ $(1\leq i\leq s)$. Then $\{a_t^i\}_{t\in\mathbb R}\subset \mathbf T(\mathbb R)\cap\mathbf G_i(\mathbb R)$ is a non-compact subgroup in $\mathbf G_i(\mathbb R)^0$. Using Moore's ergodicity theorem \cite{M66} and Mautner phenomenon \cite{M57,M80}, one can conclude that the action of $\{a_t^i\}_{t\in\mathbb R}$ on $\mathbf G_i(\mathbb R)^0/\Gamma_i$ is mixing $(1\leq i\leq s)$. Consequently, the action of $\{a_t\}_{t\in\mathbb R}$ on $\prod_{i=1}^s\mathbf G_i(\mathbb R)^0/\prod_{i=1}^s\Gamma_i$ is mixing. 

We denote by $$\tilde{\mathbf G}=\prod_{i=1}^s\mathbf G_i=\mathbf G_1\cdot\mathbf G_2\cdot\cdots\cdot\mathbf G_s.$$ Note that in \S\ref{pre}, \S\ref{counting} and \S\ref{lbd1}, we don't use any explicit expression of the lattice $\Gamma$ in $\mathbf G(\mathbb R)^0$. So we may choose $\Gamma$ such that $$\Gamma\cap\tilde{\mathbf G}(\mathbb R)^0=\prod_{i=1}^s\Gamma_i.$$ In the follwing, we fix the lattices $\Gamma_i$'s and $\Gamma$ and denote by $$\tilde\Gamma=\Gamma\cap\tilde{\mathbf G}(\mathbb R)^0.$$ So the action of $\{a_t\}_{t\in\mathbb R}$ on $\tilde{\mathbf G}(\mathbb R)^0/\tilde\Gamma$ is mixing.

Now all the arguments in \S\ref{pre}, \S\ref{counting} and \S\ref{lbd1} can be carried over to $\tilde{\mathbf G}$ and the homogeneous subspace $\tilde{\mathbf G}(\mathbb R)^0/\tilde\Gamma$ almost verbatim. Indeed, denote by $$\tilde{\mathbf P}_0=\mathbf P_0\cap\tilde{\mathbf G}$$ which is a minimal parabolic $\mathbb Q$-subgroup of $\tilde{\mathbf G}$, and let $\overline{\tilde{\mathbf P}}_0=\overline{\mathbf P}_0\cap\tilde{\mathbf G}$. Let $U$ be a small open bounded subset in $R_u(\overline{\tilde{\mathbf P}}_0)(\mathbb R)$ and define $$\tilde S(U,A,B)=\{q\in U: q\textup{ rational and }A\leq d(q)\leq B\}.$$ Let $\tilde{\mathbf T}=\mathbf T\cap\tilde{\mathbf G}$ which is a maximal $\mathbb Q$-split torus in $\tilde{\mathbf G}$, and $$\tilde{\mathbf H}_e=\mathbf H_e\cap\tilde{\mathbf G},\quad\tilde{\mathbf F}_e=\mathbf F_e\cap\tilde{\mathbf G}.$$ We also need results in the reduction theory about $\tilde{\mathbf G}(\mathbb R)/\tilde\Gamma$. Let $\tilde K$ be a maximal compact subgroup in $\tilde{\mathbf G}(\mathbb R)$. Let $\tilde M$ be the identity component in the unique maximal $\mathbb Q$-anisotropic subgroup in $Z_{\tilde{\mathbf G}(\mathbb R)}(\tilde{\mathbf T}(\mathbb R))$ (= the centralizer of $\tilde{\mathbf T}(\mathbb R)$ in $\tilde{\mathbf G}(\mathbb R)$). Denote by $$\tilde T_\eta=\{a\in\tilde{\mathbf T}(\mathbb R):\lambda(a)\leq\eta,\;\lambda\textup{ a simple root in }\tilde{\Delta}\}$$ where $\tilde\Delta$ is the set of positive $\mathbb Q$-simple roots in $\tilde{\mathbf G}$. A Siegel set in $\tilde{\mathbf G}(\mathbb R)$ is a subset of the form $\tilde S_{\eta,\Omega}=\tilde K\cdot\tilde T_\eta\cdot\tilde\Omega$ for some $\eta\in\mathbb R$ and a relatively compact open subset $\tilde\Omega$ containing identity in $\tilde M\cdot R_u(\tilde{\mathbf P}_0)(\mathbb R)$, and the group $\tilde{\mathbf G}(\mathbb R)$ can be written as $$\tilde{\mathbf G}(\mathbb R)=\tilde S_{\eta,\Omega}\cdot\tilde{\mathcal K}\cdot\tilde\Gamma$$ for some Siegel set $\tilde S_{\eta,\Omega}$ and some finite subset $\tilde{\mathcal K}\subset\tilde{\mathbf G}(\mathbb Q)$. Moreover, the finite set $\tilde{\mathcal K}$ satisfies the property that $$\tilde{\mathbf G}(\mathbb Q)=\tilde{\mathbf P}_0(\mathbb Q)\cdot\tilde{\mathcal K}\cdot\tilde\Gamma.$$ Denote by $\tilde{\mathcal K}=\{\tilde x_j\}_{j\in\tilde J}\subset\tilde{\mathbf G}(\mathbb Q)$ and we may assume that $e\in\tilde{\mathcal K}$.

The proof of the following lemma is similar to those of Corollary~\ref{c23} and Lemma~\ref{l24}.
\begin{lemma}\label{l51}
We have
\begin{enumerate}
\item An element $g\in R_u(\overline{\tilde{\mathbf P}}_0)(\mathbb R)$ is rational if and only if $g\in\tilde{\mathbf H}_e(\mathbb R)\cdot\tilde{\mathbf F}_e(\mathbb Q)$.
\item An element $g\in R_u(\overline{\tilde{\mathbf P}}_0)(\mathbb R)$ is rational if and only if $$g\in R_u(\overline{\tilde{\mathbf P}}_0)\cap (\tilde{\mathbf H}_e(\mathbb R)\cdot \tilde{\mathbf P}_0(\mathbb R)\cdot\tilde{\mathcal{K}}\cdot\tilde{\Gamma}).$$
\end{enumerate}
\end{lemma}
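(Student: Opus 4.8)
The plan is to transcribe, essentially verbatim, the proofs of Lemma~\ref{l22}, Corollary~\ref{c23} and Lemma~\ref{l24} to the group $\tilde{\mathbf G}=\prod_{i=1}^s\mathbf G_i$. The point is that $\tilde{\mathbf G}$ is itself a connected semisimple $\mathbb Q$-group, normalized by $\mathbf T$, with $\tilde{\mathbf T}=\mathbf T\cap\tilde{\mathbf G}$ a maximal $\mathbb Q$-split torus, so every structural input of \S\ref{pre} is available for the pair $(\tilde{\mathbf G},\tilde{\mathbf T})$. First I would record the compatibility facts. Since $\tilde{\mathbf G}$ is normal in $\mathbf G$ and normalized by $\mathbf T$, and since each of $\mathbf P_0$, $\overline{\mathbf P}_0$, $\mathbf P_{\beta_0}$, $\overline{\mathbf P}_{\beta_0}$, $\mathbf H_e$, $\mathbf F_e$ and their unipotent radicals is $\mathbf T$-normalized with Lie algebra a direct sum of $\mathbb Q$-root spaces, intersecting each of these groups with $\tilde{\mathbf G}$ just selects the root spaces contained in $\Lie(\tilde{\mathbf G})$. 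In particular $R_u(\overline{\tilde{\mathbf P}}_0)=R_u(\overline{\mathbf P}_0)\cap\tilde{\mathbf G}$, the product decomposition $R_u(\overline{\tilde{\mathbf P}}_0)=\tilde{\mathbf H}_e\cdot\tilde{\mathbf F}_e$ holds with $\tilde{\mathbf H}_e\subset\mathbf H_e$ fixing $V_{\beta_0}$ pointwise and $\tilde{\mathbf F}_e\subset\mathbf F_e=R_u(\overline{\mathbf P}_{\beta_0})$, and the multiplication map $\tilde{\mathbf P}_0\times\tilde{\mathbf H}_e\times\tilde{\mathbf F}_e\to\tilde{\mathbf G}$ is injective, being the restriction of the injective map $\mathbf P_0\times\mathbf H_e\times\mathbf F_e\to\mathbf G$. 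Finally, the reduction theory of \S\ref{pre} applied to $\tilde{\mathbf G}(\mathbb R)$ provides the finite set $\tilde{\mathcal K}\subset\tilde{\mathbf G}(\mathbb Q)$ with $\tilde{\mathbf G}(\mathbb Q)=\tilde{\mathbf P}_0(\mathbb Q)\cdot\tilde{\mathcal K}\cdot\tilde\Gamma$, and $\tilde{\mathcal K},\tilde\Gamma\subset\tilde{\mathbf G}(\mathbb Q)$.

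For part (1) I would write $g=\tilde h\cdot\tilde f$ with $\tilde h\in\tilde{\mathbf H}_e(\mathbb R)$ and $\tilde f\in\tilde{\mathbf F}_e(\mathbb R)$. Since $\rho(\tilde h)$ preserves $V_{\beta_0}$ and restricts to the identity there, one gets $\rho(g)\cdot\mathbb Z^d\cap V_{\beta_0}=\rho(\tilde f)\cdot\mathbb Z^d\cap V_{\beta_0}$, so $g$ is rational if and only if $\tilde f$ is. As $\tilde f\in\tilde{\mathbf F}_e(\mathbb R)\subset R_u(\overline{\mathbf P}_{\beta_0})(\mathbb R)$, Lemma~\ref{l22} says $\tilde f$ is rational if and only if $\tilde f\in R_u(\overline{\mathbf P}_{\beta_0})(\mathbb Q)=\mathbf F_e(\mathbb Q)$; and a $\mathbb Q$-point of $\mathbf F_e$ lying on the $\mathbb Q$-subvariety $\tilde{\mathbf F}_e$ is automatically a point of $\tilde{\mathbf F}_e(\mathbb Q)$. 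This gives (1).

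For part (2), the forward implication is immediate: if $g$ is rational then by (1) $g\in\tilde{\mathbf H}_e(\mathbb R)\cdot\tilde{\mathbf F}_e(\mathbb Q)\subset\tilde{\mathbf H}_e(\mathbb R)\cdot\tilde{\mathbf G}(\mathbb Q)=\tilde{\mathbf H}_e(\mathbb R)\cdot\tilde{\mathbf P}_0(\mathbb Q)\cdot\tilde{\mathcal K}\cdot\tilde\Gamma\subset\tilde{\mathbf H}_e(\mathbb R)\cdot\tilde{\mathbf P}_0(\mathbb R)\cdot\tilde{\mathcal K}\cdot\tilde\Gamma$, and $g\in R_u(\overline{\tilde{\mathbf P}}_0)(\mathbb R)$ by assumption. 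Conversely, I would write $g=\tilde h\cdot\tilde f$ as before; then $\tilde f=\tilde h^{-1}g$ lies in $\tilde{\mathbf H}_e(\mathbb R)\cdot\tilde{\mathbf P}_0(\mathbb R)\cdot\tilde{\mathcal K}\cdot\tilde\Gamma$, say $\tilde f=\tilde h_0\cdot p\cdot x\cdot\gamma$ with $\tilde h_0\in\tilde{\mathbf H}_e(\mathbb R)$, $p\in\tilde{\mathbf P}_0(\mathbb R)$, $x\in\tilde{\mathcal K}$ and $\gamma\in\tilde\Gamma$. For $\sigma\in\Gal(\mathbb R/\mathbb Q)$ one has $\sigma(x\gamma)=x\gamma$, whence $p^{-1}\tilde h_0^{-1}\tilde f=x\gamma=\sigma(p)^{-1}\sigma(\tilde h_0)^{-1}\sigma(\tilde f)$; both sides are expressed as a product of an element of $\tilde{\mathbf P}_0$, an element of $\tilde{\mathbf H}_e$, and an element of $\tilde{\mathbf F}_e$, so the injectivity of $\tilde{\mathbf P}_0\times\tilde{\mathbf H}_e\times\tilde{\mathbf F}_e\to\tilde{\mathbf G}$ forces $\sigma(\tilde f)=\tilde f$ for every $\sigma$. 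Hence $\tilde f\in\tilde{\mathbf F}_e(\mathbb Q)$, and by (1) $g$ is rational.

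Since the argument is a faithful translation of \S\ref{pre}, there is no deep obstacle; the main thing to be careful about is the bookkeeping in the first paragraph --- checking that the root-space decomposition, the parabolic and Bruhat structures, and the reduction theory all descend correctly from $\mathbf G$ to the $\mathbf T$-normalized connected semisimple $\mathbb Q$-subgroup $\tilde{\mathbf G}$, and that $\tilde{\mathbf H}_e$, $\tilde{\mathbf F}_e$ are exactly $\mathbf H_e\cap\tilde{\mathbf G}$, $\mathbf F_e\cap\tilde{\mathbf G}$ with the expected product decomposition and with the multiplication map inheriting injectivity. Once these are in place, everything else is routine and (1), (2) follow as above.
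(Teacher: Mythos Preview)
Your proposal is correct and is exactly the approach the paper intends: the paper states Lemma~\ref{l51} without proof, having just remarked that ``all the arguments in \S\ref{pre}, \S\ref{counting} and \S\ref{lbd1} can be carried over to $\tilde{\mathbf G}$ \ldots\ almost verbatim,'' and you have carried out precisely this transcription of Lemma~\ref{l22}, Corollary~\ref{c23} (with $w_i=e$), and Lemma~\ref{l24}. The compatibility bookkeeping you flag in your first paragraph is indeed the only nontrivial point, and it holds for the reasons you give.
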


\begin{definition}\label{def52}
Let $j\in\tilde J$. A rational element $g$ in $R_u(\overline{\tilde{\mathbf P}}_0)(\mathbb R)$ is called $j$-rational if it can be written as $$g=h\cdot p\cdot \tilde x_j\cdot\gamma$$ for some $h\in\tilde{\mathbf H}_e(\mathbb R)$, $p\in\tilde{\mathbf P}_0(\mathbb R)$ and $\gamma\in\tilde\Gamma$.
\end{definition}

Let $\tilde{\mathfrak a}$ be the Lie algebra of $\tilde{\mathbf T}(\mathbb R)$. For any $\tilde x_j\in\tilde{\mathcal K}$, any compact subset $K_1$ in $\ker(\beta_0)\cap\tilde{\mathfrak a}$, any compact subset $K_2$ in $\tilde{\mathbf H}_e(\mathbb R)$, any compact subset $K_3\subset\mathbf M_a(\mathbb R)\cap\tilde{\mathbf G}(\mathbb R)^0$ and any compact subset $K_4\subset R_u(\tilde{\mathbf P}_0)(\mathbb R)$, we define $$\tilde S_{K_1,K_2,K_3,K_4,j}(U,A,B)$$ to be the set of all rational elements $q$ in $U$ such that
\begin{enumerate}
\item $A\leq d(q)\leq B$ and $q$ is $j$-rational for $\tilde x_j\in\tilde C$; 
\item $q=a\cdot h\cdot m\cdot u\cdot\tilde x_j\cdot\gamma$ for some $a\in\exp(\tilde{\mathfrak a})$, $\pi_{\ker(\beta_0)}(a)\in K_1$, $h\in K_2$, $m\in K_3$, $u\in K_4$ and $\gamma\in\tilde\Gamma$.
\end{enumerate}
Note that $\tilde S_{K_1,K_2,K_3,K_4,j}(U,A,B)\neq\emptyset$ implies that $\tilde x_j\in\tilde{\mathbf G}(\mathbb R)^0$.

We fix a Haar measure $\mu_{\tilde{\mathbf H}_e}$ on $\tilde{\mathbf H}_e(\mathbb R)$ and a Haar measure $\mu_{\tilde{\mathbf F}_e}$ on $\tilde{\mathbf F}_e(\mathbb R)$. Then for any $q\in\tilde{\mathbf F}_e(\mathbb Q)$, we define $m_{\tilde{\mathbf H}_eq}$ to be the locally finite measure defined on $R_u(\overline{\tilde{\mathbf P}}_0)(\mathbb R)$ which is supported on $\tilde{\mathbf H}_e(\mathbb R)\cdot q$ and induced by $\mu_{\tilde{\mathbf H_e}}$ via the product map $$\tilde{\mathbf H}_e(\mathbb R)\times\{q\}\to\tilde{\mathbf H}_e(\mathbb R)\cdot q\subset R_u(\overline{\tilde{\mathbf P}}_0)(\mathbb R).$$ We define $$m_{\tilde{\mathbf H}_e}:=\sum_{q\in\tilde{\mathbf F}_e(\mathbb Q)}m_{\tilde{\mathbf H}_eq}.$$

Then using the same argument as in Proposition~\ref{p33}, we have
\begin{proposition}\label{p53}
Let $U$ be a small open bounded subset in $R_u(\overline{\tilde{\mathbf P}}_0)(\mathbb R)$ which projects injectively into $R_u(\overline{\tilde{\mathbf P}}_0)(\mathbb R)/(R_u(\overline{\tilde{\mathbf P}}_0)(\mathbb R)\cap\tilde\Gamma)$. Then for any $j\in\tilde J$ with $\tilde x_j\in\tilde{\mathbf G}(\mathbb R)^0$ and any sufficiently large $l>0$, we have $$m_{\tilde{\mathbf H}_e}(\tilde S_{K_1,K_2,K_3,K_4,j}(U,A_l,B_l))\asymp l^{-\sum_{\alpha\in\Phi(\tilde{\mathbf F}_e)}\alpha(a_1)/\beta_0(a_1)}\cdot\mu_{R_u(\overline{\tilde{\mathbf P}}_0)}(U)$$ and $$m_{\tilde{\mathbf H}_e}(\tilde S_{K_1,K_2,K_3,K_4}(U,A_l,B_l))\asymp l^{-\sum_{\alpha\in\Phi(\tilde{\mathbf F}_e)}\alpha(a_1)/\beta_0(a_1)}\cdot\mu_{R_u(\overline{\tilde{\mathbf P}}_0)}(U).$$ Here the implicit constants depend only on the compact subsets $K_i$'s, $\tilde{\mathbf G}$ and $\tilde{\Gamma}$.
\end{proposition}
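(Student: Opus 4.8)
The plan is to transplant the entire chain of arguments of \S\ref{counting} that culminates in Proposition~\ref{p33} from the pair $(\mathbf G,\Gamma)$ to the pair $(\tilde{\mathbf G},\tilde\Gamma)$. The point is that every ingredient used in the proof of Proposition~\ref{p33} has already been reproduced for $\tilde{\mathbf G}$ earlier in this section: the decomposition $R_u(\overline{\tilde{\mathbf P}}_0)=\tilde{\mathbf H}_e\cdot\tilde{\mathbf F}_e$, the characterizations of rational elements in Lemma~\ref{l51}, the notion of $j$-rational element (Definition~\ref{def52}), the reduction-theoretic data $\tilde K$, $\tilde{\mathcal K}$, $\tilde\Gamma$, and the leaf-wise measure $m_{\tilde{\mathbf H}_e}$. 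Moreover, by the discussion at the beginning of this section the action of $\{a_t\}_{t\in\mathbb R}$ on $\tilde{\mathbf G}(\mathbb R)^0/\tilde\Gamma$ is mixing, so Proposition~\ref{p31} holds verbatim with $\mathbf G$, $\Gamma$, $R_u(\overline{\mathbf P}_0)$ replaced by $\tilde{\mathbf G}$, $\tilde\Gamma$, $R_u(\overline{\tilde{\mathbf P}}_0)$ — here one uses, as in Remark~\ref{r32}, that $R_u(\overline{\tilde{\mathbf P}}_0)$ lies in the subgroup of $\tilde{\mathbf G}$ generated by the unstable and central horospherical subgroups of $\{a_t\}_{t\in\mathbb R}$, so that the mixing argument of \cite[\S2]{KM96} applies.

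Granting this, I would first rerun the computation of \S\ref{counting} to obtain, for $\beta_0(a_T)=-\ln l$,
$$a_T\cdot\tilde S_{K_1,K_2,K_3,K_4,j}(U,A_l,B_l)\tilde\Gamma=\bigl(a_T\cdot U\tilde\Gamma/\tilde\Gamma\bigr)\cap\exp(\tilde{\mathfrak a}_{I_0,K_1})K_2K_3K_4\tilde x_j\tilde\Gamma/\tilde\Gamma,$$
where $I_0$ is the same compact interval in $\Lie(a_t)$ coming from the denominator formula $d(q)=c_{\tilde x_j}e^{\beta_0(a)\cdot\dim V_{\beta_0}}$; this formula is unchanged because it only uses that $\mathbf M_a$ and $R_u(\tilde{\mathbf P}_0)$ stabilize $V_{\beta_0}$ and preserve volumes there, and that $\tilde{\mathbf T}\subset\mathbf T$ acts by scalars on $V_{\beta_0}$. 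Since the right-hand side has compact closure in $\tilde{\mathbf G}(\mathbb R)^0/\tilde\Gamma$, there is a $\delta_0>0$ producing a transversal product chart $B_{\tilde{\mathbf F}_e}(\delta_0)\times(\cdots)\to(\cdots)$, whence the translates $B_{\tilde{\mathbf F}_e}(\delta_0,T)\cdot p\tilde\Gamma$ over rational $p$ in the set are pairwise disjoint.

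Next I would estimate $m_{\tilde{\mathbf H}_e}(\tilde S_{K_1,K_2,K_3,K_4,j}(U,A_l,B_l))$ from above and below exactly as in the proof of Proposition~\ref{p33}: thicken $U$ by a small ball $B_{\tilde{\mathbf F}_e}(\epsilon)$, apply the $\tilde{\mathbf G}$-version of Proposition~\ref{p31} to the characteristic function of $B_{\tilde{\mathbf F}_e}(\epsilon)\exp(\tilde{\mathfrak a}_{I_0,K_1})K_2K_3K_4\tilde x_j\tilde\Gamma/\tilde\Gamma$ along $a_T$-translates of $R_u(\overline{\tilde{\mathbf P}}_0)(\mathbb R)$-orbits, and divide by the Haar volume
$$\mu_{\tilde{\mathbf F}_e}(B_{\tilde{\mathbf F}_e}(\epsilon,T))=\mu_{\tilde{\mathbf F}_e}(B_{\tilde{\mathbf F}_e}(\epsilon))\cdot e^{-\sum_{\alpha\in\Phi(\tilde{\mathbf F}_e)}\alpha(a_T)}=\mu_{\tilde{\mathbf F}_e}(B_{\tilde{\mathbf F}_e}(\epsilon))\cdot l^{\sum_{\alpha\in\Phi(\tilde{\mathbf F}_e)}\alpha(a_1)/\beta_0(a_1)},$$
which is where the exponent $-\sum_{\alpha\in\Phi(\tilde{\mathbf F}_e)}\alpha(a_1)/\beta_0(a_1)$ enters. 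The matching lower bound is obtained by the same argument as in \cite[\S4]{FZ22}. Summing the per-$j$ estimate over the finitely many $j\in\tilde J$ with $\tilde x_j\in\tilde{\mathbf G}(\mathbb R)^0$ (and noting that $\tilde S_{K_1,K_2,K_3,K_4,j}(U,A_l,B_l)\neq\emptyset$ forces $\tilde x_j\in\tilde{\mathbf G}(\mathbb R)^0$) yields the second displayed equivalence.

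Since the argument is largely a transcription, I do not expect a serious obstacle; the one place that needs care rather than copying is checking that the reduction theory passes correctly to $\tilde{\mathbf G}$ — that $\tilde{\mathbf P}_0$ is a minimal parabolic $\mathbb Q$-subgroup of $\tilde{\mathbf G}$, that $\tilde{\mathbf T}$ is a maximal $\mathbb Q$-split torus, and, crucially, that the lattice $\tilde\Gamma=\Gamma\cap\tilde{\mathbf G}(\mathbb R)^0$ is arithmetic with $\rho(\tilde\Gamma)$ commensurating $\mathbb Z^d$, so that the denominators $d(q)$ defined via $\rho$ behave exactly as before. Once this bookkeeping is in place, every step above is word-for-word the corresponding step in \S\ref{counting} and in the proof of Proposition~\ref{p33}.
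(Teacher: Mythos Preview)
Your proposal is correct and matches the paper's own approach exactly: the paper does not give a separate proof of this proposition but simply states ``Then using the same argument as in Proposition~\ref{p33}, we have'' before recording the conclusion. Your write-up is precisely the transcription of the \S\ref{counting} argument to the $(\tilde{\mathbf G},\tilde\Gamma)$ setting that the paper has in mind, and the reduction-theoretic bookkeeping you flag as needing care is in fact already carried out in the paragraphs of \S\ref{lbd2} preceding the statement.
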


The analogue of Lemma~\ref{l41} holds as well in $\tilde{\mathbf G}(\mathbb R)^0/\tilde\Gamma$. 
\begin{lemma}\label{l54}
Fix $j\in\tilde J$ with $\tilde x_j\in\tilde{\mathbf G}(\mathbb R)^0$. Let $U$ be an open bounded subset in $R_u(\overline{\tilde{\mathbf P}}_0)(\mathbb R)$ and $$\beta_0(a_T)=-\ln l$$ for some $T>0$ and $l>1$. Let $\tilde{\mathcal F}_q=\tilde{\mathbf H}_e(\mathbb R)\cdot q$ be the leaf through $q\in\tilde{\mathbf F}_e(\mathbb Q)$ such that $$\tilde{\mathcal F}_q\cap\tilde S_{K_1,K_2,K_3,K_4,j}(U,A_l, B_l)\neq\emptyset.$$ Then there exist $\theta_1>0$ and $\theta_2>0$ such that for any $p\in\tilde{\mathcal F}_q\cap\tilde S_{K_1,K_2,K_3,K_4,j}(U,A_l, B_l)$ $$B_{\tilde{\mathbf H}_e}(\theta_1,T)\cdot p\cap U\subset\tilde{\mathcal F}_q\cap \tilde S_{K_1,\tilde K_2,K_3,K_4,j}(U,A_l, B_l)$$ where $\tilde K_2=B_{\tilde{\mathbf H}_e}(\theta_2)\cdot K_2$ and $B_{\tilde{\mathbf H}_e}(\theta_1,T)=a_{-T}\cdot B_{\tilde{\mathbf H}_e}(\theta_1)\cdot a_{T}$. Here the constants $\theta_1$ and $\theta_2$ depend only on $K_i$'s, $\tilde{\mathbf G}$ and $\tilde\Gamma$.
\end{lemma}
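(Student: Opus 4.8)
The plan is to transcribe the proof of Lemma~\ref{l41} essentially verbatim into the homogeneous space $\tilde{\mathbf G}(\mathbb R)^0/\tilde\Gamma$, replacing $\mathbf T,\mathbf H_e,\mathbf F_e,\mathcal K,\Gamma,\mathfrak a$ by their tilde-analogues $\tilde{\mathbf T},\tilde{\mathbf H}_e,\tilde{\mathbf F}_e,\tilde{\mathcal K},\tilde\Gamma,\tilde{\mathfrak a}$, and using Lemma~\ref{l51} and Definition~\ref{def52} in place of Lemma~\ref{l24} and Definition~\ref{def27}. The one substantive input is the identity
$$a_T\cdot\tilde S_{K_1,K_2,K_3,K_4,j}(U,A_l,B_l)\tilde\Gamma=\big(a_T\cdot U\tilde\Gamma/\tilde\Gamma\big)\cap\exp(\tilde{\mathfrak a}_{I_0,K_1})K_2K_3K_4\tilde x_j\tilde\Gamma/\tilde\Gamma,$$
which is the exact analogue of the computation carried out in \S\ref{counting}; it rests only on the denominator formula $d(q)=c_{\tilde x_j}\,e^{\beta_0(a)\dim V_{\beta_0}}$ for $j$-rational $q$ and on the description of rational elements of $R_u(\overline{\tilde{\mathbf P}}_0)(\mathbb R)$ given in Lemma~\ref{l51}. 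Here $I_0\subset\Lie(a_t)$ is the same compact interval as before, cut out by the constant $c_{\tilde x_j}$, and $\tilde{\mathfrak a}_{I_0,K_1}$ is the associated compact subset of $\tilde{\mathfrak a}$.

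Granting this, the argument has two short steps. First, using compactness of $\tilde{\mathfrak a}_{I_0,K_1}$ and continuity of the conjugation action, I would choose $\theta_1,\theta_2>0$ so small that $a^{-1}B_{\tilde{\mathbf H}_e}(\theta_1)\,a\subset B_{\tilde{\mathbf H}_e}(\theta_2)$ for every $a\in\exp(\tilde{\mathfrak a}_{I_0,K_1})$. Second, for $p\in\tilde{\mathcal F}_q\cap\tilde S_{K_1,K_2,K_3,K_4,j}(U,A_l,B_l)$ the identity above supplies a decomposition $a_T\cdot p=a\cdot h\cdot m\cdot u\cdot\tilde x_j\cdot\gamma$ with $a\in\exp(\tilde{\mathfrak a}_{I_0,K_1})$, $h\in K_2$, $m\in K_3$, $u\in K_4$ and $\gamma\in\tilde\Gamma$. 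Since $a_T\in\tilde{\mathbf T}(\mathbb R)$ normalises $\tilde{\mathbf H}_e$ and $B_{\tilde{\mathbf H}_e}(\theta_1,T)=a_{-T}B_{\tilde{\mathbf H}_e}(\theta_1)a_T$, one obtains
$$a_T\cdot B_{\tilde{\mathbf H}_e}(\theta_1,T)\cdot p=B_{\tilde{\mathbf H}_e}(\theta_1)\cdot a\cdot h\cdot m\cdot u\cdot\tilde x_j\cdot\gamma=a\cdot\big(a^{-1}B_{\tilde{\mathbf H}_e}(\theta_1)\,a\big)\cdot h\cdot m\cdot u\cdot\tilde x_j\cdot\gamma\subset\exp(\tilde{\mathfrak a}_{I_0,K_1})\,\tilde K_2\,K_3\,K_4\,\tilde x_j\,\tilde\Gamma,$$
where $\tilde K_2=B_{\tilde{\mathbf H}_e}(\theta_2)K_2$. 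By the definition of $\tilde S_{K_1,\tilde K_2,K_3,K_4,j}$ this gives $B_{\tilde{\mathbf H}_e}(\theta_1,T)\cdot p\cap U\subset\tilde S_{K_1,\tilde K_2,K_3,K_4,j}(U,A_l,B_l)$, and since $B_{\tilde{\mathbf H}_e}(\theta_1,T)\cdot p\subset\tilde{\mathbf H}_e(\mathbb R)\cdot p=\tilde{\mathcal F}_q$, the claimed inclusion follows. The constants $\theta_1,\theta_2$ depend only on $\tilde{\mathfrak a}_{I_0,K_1}$ and the balls $B_{\tilde{\mathbf H}_e}(\cdot)$, hence only on the $K_i$'s, $\tilde{\mathbf G}$ and $\tilde\Gamma$.

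I do not anticipate a real obstacle: the statement contains no analysis and is proved by the same conjugation trick as Lemma~\ref{l41}. The only point that must be checked is that the identity imported from \S\ref{counting} is valid over $\tilde{\mathbf G}$, and this is structurally automatic: one has $\{a_t\}_{t\in\mathbb R}\subset\tilde{\mathbf T}(\mathbb R)$, the highest weight space $V_{\beta_0}$ and the character $\beta_0$ restrict coherently to $\tilde{\mathbf T}$, the subgroups $\tilde{\mathbf H}_e$ and $R_u(\tilde{\mathbf P}_0)$ still fix $V_{\beta_0}$ pointwise, $\mathbf M_a\cap\tilde{\mathbf G}$ still preserves $V_{\beta_0}$, and $\tilde{\mathbf T}$ acts on $V_{\beta_0}$ by the scalar $e^{\beta_0(\cdot)}$, so every ingredient used in \S\ref{counting}--\S\ref{lbd1} remains available. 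The only bookkeeping remark worth making explicit is that $\tilde S_{K_1,K_2,K_3,K_4,j}(U,A_l,B_l)$ can be non-empty only when $\tilde x_j\in\tilde{\mathbf G}(\mathbb R)^0$, which is precisely the hypothesis imposed on $j$ in the statement, so no difficulty arises there.
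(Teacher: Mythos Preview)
Your proposal is correct and follows essentially the same approach as the paper: the paper does not give a separate proof of Lemma~\ref{l54} but simply states that the analogues of Lemma~\ref{l41} and Lemma~\ref{l42} hold in $\tilde{\mathbf G}(\mathbb R)^0/\tilde\Gamma$, and your argument is precisely the verbatim transcription of the proof of Lemma~\ref{l41} into the tilde-setting, with the additional (and helpful) explicit verification that the key identity from \S\ref{counting} carries over.
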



Consequently, using the same argument as in \S\ref{lbd1}, we can conclude that for any $0\leq\tau(\psi)<\beta_0(a_{-1})$ $$\dim_H(S_\rho(\psi)^c\cap\tilde{\mathbf G}(\mathbb R))\geq\dim\tilde{\mathbf G}-\sum_{\alpha\in\Phi(\tilde{\mathbf F}_e)}\frac{\alpha(a_1)\tau(\psi)}{\beta_0(a_{-1})\tilde{\nu}_0(a_1)}$$ where $\tilde{\nu}_0$ is any $\mathbb Q$-root in $R_u(\overline{\tilde{\mathbf P}}_0)$ such that $$\tilde{\nu}_0(a_1)=\max\{\alpha(a_1):\alpha\in\Phi(R_u(\overline{\tilde{\mathbf P}}_0))\}.$$ Note that $\mathbf G_i(\mathbb R)$ $(s+1\leq i\leq k)$ commutes with $\{a_t\}_{t\in\mathbb R}$, and an element $g\in S_\rho(\psi)^c$ if and only if $h\cdot g\in S_\rho(\psi)^c$ for any $h\in\mathbf G_i(\mathbb R)$ $(s+1\leq i\leq k)$. Therefore, we have
\begin{align*}
\dim_H(S_\rho(\psi)^c)=&\dim_H(S_\rho(\psi)^c\cap\tilde{\mathbf G}(\mathbb R))+\sum_{i=s+1}^k\dim\mathbf G_i(\mathbb R)\\
\geq&\dim\mathbf G-\sum_{\alpha\in\Phi(\mathbf F_e)}\frac{\alpha(a_1)\tau(\psi)}{\nu_0(a_1)\beta_0(a_{-1})}
\end{align*}
where $\nu_0(a_1):=\max\{\alpha(a_1):\alpha\in\Phi(\mathbf F_e)\}$. Here we use the fact that $\nu_0(a_1)=\tilde{\nu}_0(a_1)$ and $$\sum_{\alpha\in\Phi(\tilde{\mathbf F}_e)}\alpha(a_1)/\beta_0(a_1)=\sum_{\alpha\in\Phi(\mathbf F_e)}\alpha(a_1)/\beta_0(a_1).$$ This completes the proof of Theorem~\ref{mthm11}.

\begin{remark}\label{r56}
One can see from the arguments in \S\ref{lbd1} and \S\ref{lbd2} that we actually prove that for any open bounded subset $U$ in $R_u(\overline{\mathbf P}_0)(\mathbb R)$ $$\dim_H(S_\rho(\psi)^c\cap U)\geq\dim R_u(\overline{\mathbf P}_0)(\mathbb R)-\sum_{\alpha\in\Phi(\mathbf F_e)}\frac{\alpha(a_1)\tau(\psi)}{\nu_0(a_1)\beta_0(a_{-1})}.$$ Note that $\mathbf P_0\cdot R_u(\overline{\mathbf P}_0)$ is a Zariski open dense subset in $\mathbf G$, and for any open bounded subset $\tilde U\subset\mathbf G(\mathbb R)$, one can find open subsets $U_1$ and $U_2$ in $R_u(\overline{\mathbf P}_0)(\mathbb R)$ and $\mathbf P_0(\mathbb R)$ respectively such that $$U_2\cdot U_1\subset\tilde U.$$ Moreover, by definition, if $x\in S_\rho(\psi)^c$, then for any $g\in\mathbf P_0(\mathbb R)$, $g\cdot x$ is also an element in $S_\rho(\psi)^c$. This implies that $$\dim_H(S_\rho(\psi)^c\cap\tilde U)\geq\dim U_2+\dim_H(S_\rho(\psi)^c\cap U_1)\geq\dim\mathbf G-\sum_{\alpha\in\Phi(\mathbf F_e)}\frac{\alpha(a_1)\tau(\psi)}{\beta_0(a_{-1})\nu_0(a_1)}.$$ 
\end{remark}

Finally, we study the case $\tau(\psi)=\beta_0(a_{-1})$ and conclude the proof of Theorem~\ref{mthm11}.

\begin{proposition}\label{p56}
Let $\mathbf G$ be a connected semisimple algebraic group, $\rho$ a $\mathbb Q$-rational irreducible representation of $\mathbf G$, $\{a_t\}_{t\in\mathbb R}$ a one-parameter subgroup in $\mathbf T(\mathbb R)$ and $\beta_0$ the highest weight defined as in \S\ref{pre}. Let $\psi:\mathbb R_+\to\mathbb R_+$ with $\tau(\psi)=\beta_0(a_{-1})$. Then
\begin{enumerate}
\item If $\psi(t)\cdot e^{\beta_0(a_{-1})t}$ is bounded, then $S_\rho(\psi)^c=\emptyset$.
\item If $\psi(t)\cdot e^{\beta_0(a_{-1})t}$ is unbounded, then $S_\rho(\psi)^c\neq\emptyset$ and $$\dim_HS_\rho(\psi)^c\geq\dim\mathbf G-\frac{\tau(\psi)}{\beta_0(a_{-1})\nu_{0}(a_1)}\cdot\sum_{\alpha\in\Phi(R_u(\overline{\mathbf P}_{\beta_0}))}\alpha(a_1).$$
\end{enumerate}
\end{proposition}
\begin{proof}
Suppose that $\psi(t)\cdot e^{\beta_0(a_{-1})t}$ is bounded. Then there exists $C>0$ such that $$\psi(t)\leq C\cdot e^{-\beta_0(a_{-1})t}\quad(t>0).$$ Let $g\in\mathbf G(\mathbb R)$. Since $\beta_0(a_{-1})>0$ is the fastest contracting rate of the $\{a_t\}_{t\in\mathbb R}$-action on $V$, we have $$\|\rho(a_t)v\|\geq e^{-\beta_0(a_{-1})t}\|v\|\quad(\forall v\in\rho(g)\cdot\mathbb Z^d\setminus\{0\})$$ and $$\delta(\rho(a_t\cdot g)\cdot\mathbb Z^d)\geq e^{-\beta_0(a_{-1})t}\cdot\delta(\rho(g)\cdot\mathbb Z^d).$$ One can find a constant $C_g>0$ depending on $g$ such that $$\delta(\rho(a_t\cdot g)\cdot\mathbb Z^d)\geq C_g\cdot\psi(t)\quad(t>0)$$ which shows that $g\in S_\rho(\psi)$ and $S_\rho(\psi)=\mathbf G(\mathbb R)$. This proves the first claim.

Now suppose that $\psi(t)\cdot e^{\beta_0(a_{-1})t}$ is unbounded. We first consider the case where $\{a_t\}_{t\in\mathbb R}$ on $\mathbf G(\mathbb R)^0/\Gamma$ is mixing. We may write $$\psi(t)\cdot e^{\beta_0(a_{-1})t}=e^{h(t)}$$ where $h(t)$ is not bounded from above. Since $\tau(\psi)=\beta_0(a_{-1})$, we have $$\limsup_{t\to\infty}\frac{h(t)}t=0.$$ Let $\{s_k\}_{k\in\mathbb N}$ be a divergent sequence in $\mathbb R_+$ such that $\lim_{k\to\infty}h(s_k)=\infty.$ Then $$\lim_{k\to\infty}\frac{h(s_k)}{s_k}=0.$$ In \S\ref{lbd1}, we construct a Cantor-type subset $\mathbf A_\infty$ in $R_u(\overline{\mathbf P}_0)(\mathbb R)$ with parameters $l_k$ and $t_k$ $(k\in\mathbb N)$. Here to prove the proposition with $\tau(\psi)=\beta_0(a_{-1})$, we may take $l_k=e^{h(s_k)/2}$, $t_k=s_k$ and $\tau_k=\beta_0(a_{-1})-h(s_k)/(2s_k)$ $(k\in\mathbb N)$. Then $$t_k=\frac{\ln l_k}{\beta_0(a_{-1})-\tau_k}.$$ Using the same argument in Lemma~\ref{l44}, one can prove that for any $p\in\mathbf A_{\infty}$
\begin{align*}
\delta(\rho(a_{t_k}\cdot p)\mathbb Z^d)&\lesssim e^{-\tau_k\cdot t_k}=e^{-\beta_0(a_{-1})t_k+h(t_k)/2}=\psi(t_k)\cdot e^{-h(t_k)/2}\quad(k\in\mathbb N)
\end{align*}
and $\mathbf A_\infty\subset S_{\rho}(\psi)^c\cap R_u(\overline{\mathbf P}_0)(\mathbb R)$. By Theorem~\ref{thm43}, we may estimate the Hausdorff dimension of $\mathbf A_\infty$ and obtain $$\dim_HS_\rho(\psi)^c\geq\dim\mathbf G-\frac{\tau(\psi)}{\beta_0(a_{-1})\nu_{0}(a_1)}\cdot\sum_{\alpha\in\Phi(R_u(\overline{\mathbf P}_{\beta_0}))}\alpha(a_1).$$

If the action of $\{a_t\}_{t\in\mathbb R}$ on $\mathbf G(\mathbb R)^0/\Gamma$ is not mixing, then we may repeat the arguments in this section and reduce the problem in the homogeneous space $\tilde{\mathbf G}(\mathbb R)^0/\tilde{\Gamma}$. This completes the proof of the proposition.
\end{proof}

\section{An upper bound for the Hausdorff dimension of $S_\rho(\psi)^c$}\label{ubd}
In this section, we prove Theorem~\ref{mthm12}. We will first compute an upper bound for the Hausdorff dimension of $S_\rho(\psi)^c\cap R_u(\overline{\mathbf P}_0)(\mathbb R)$ by constructing open covers of the subset $S_\rho(\psi)^c\cap R_u(\overline{\mathbf P}_0)(\mathbb R)$, and then derive Theorem~\ref{mthm12} from the upper bound of $\dim_H S_\rho(\psi)^c\cap R_u(\overline{\mathbf P}_0)(\mathbb R)$. We denote by $$\mathcal C:=\bigcup_{w\in{_{\mathbb Q}\overline{\mathcal W}}}w\mathbf F_{w}(\mathbb R).$$

\begin{proposition}\label{p61}
Suppose that $$\delta(\rho(g)\cdot\mathbb Z^d)\leq r$$ for some $g\in\mathbf G(\mathbb R)$ and $r>0$. Then there exist $\tilde g\in\mathcal C$ and a discrete primitive subgroup of rank $\dim V_{\beta_0}$ $$\Lambda_g\subset\rho(g)\cdot\mathbb Z^d$$ such that $$\Vol(\Lambda_g)\leq C\cdot r^{\dim V_{\beta_0}}\textup{ and }\rho(\tilde g)\cdot\Lambda_g\subset V_{\beta_0}\textup{ is Zariski dense in }V_{\beta_0}$$ where the constant $C>0$ depends only on $\mathbf G$, $\Gamma$ and $\rho$.
\end{proposition}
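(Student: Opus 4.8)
The plan is to combine the reduction theory of $\mathbf G(\mathbb R)$ recalled in \S\ref{pre} with the Bruhat decomposition, reading off $\tilde g$ and $\Lambda_g$ from a Siegel normal form of $g$. First I would write $g=k\,a\,\omega\,x_j\,\gamma$ using $\mathbf G(\mathbb R)=S_{\eta,\Omega}\cdot\mathcal K\cdot\Gamma$, where $k\in K$, $a\in T_\eta\subset\mathbf T(\mathbb R)$, $\omega\in\Omega\subset M\cdot R_u(\mathbf P_0)(\mathbb R)$, $x_j\in\mathcal K$ and $\gamma\in\Gamma$; since $\rho(\Gamma)$ preserves $\mathbb Z^d$ this gives $\rho(g)\mathbb Z^d=\rho(ka\omega x_j)\mathbb Z^d$. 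The crucial estimate is $\delta(\rho(g)\mathbb Z^d)\gtrsim e^{\beta_0(a)}$ with implied constant depending only on $\mathbf G,\Gamma,\rho$: the lattices $\rho(\omega)\rho(x_j)\mathbb Z^d$ range over a bounded family with shortest vector bounded below (finitely many $x_j$, $\Omega$ relatively compact), so any nonzero vector $v$ in such a lattice has some $\mathbf T$-weight component $v_\beta$ with $\|v_\beta\|\gtrsim 1$; as $\rho(a)$ scales $V_\beta$ by $e^{\beta(a)}$ and $\rho(k)$ has bounded distortion, $\|\rho(ka)v\|\gtrsim e^{\beta(a)}$, and since every weight of $\rho$ has the form $\beta_0-\sum n_i\alpha_i$ with $n_i\ge 0$ while the $\alpha_i(a)$ are bounded above on $T_\eta$ (the restrictions to $\mathbf T$ of the simple roots of $(\mathbf G,\mathbf S)$ being nonnegative combinations of simple $\mathbb Q$-roots), one gets $\beta(a)\ge\beta_0(a)-C_0$ uniformly. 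Hence $e^{\beta_0(a)}\le C_1\,\delta(\rho(g)\mathbb Z^d)\le C_1 r$.

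Next I would set
$$\Lambda_g:=\rho(ka\omega)\left(\rho(x_j)\mathbb Z^d\cap V_{\beta_0}\right).$$
Because $x_j\in\mathbf G(\mathbb Q)$ and $V_{\beta_0}$ is defined over $\mathbb Q$, $\rho(x_j)\mathbb Z^d\cap V_{\beta_0}$ is a full-rank lattice in $V_{\beta_0}$, primitive in $\rho(x_j)\mathbb Z^d$, with covolume bounded above and below over the finite set $\mathcal K$. Since $\mathbf T$ and $M\cdot R_u(\mathbf P_0)$ lie in $\mathbf P_{\beta_0}$, the maps $\rho(a)$ and $\rho(\omega)$ preserve $V_{\beta_0}$, with $\rho(a)|_{V_{\beta_0}}=e^{\beta_0(a)}\cdot\mathrm{Id}$ and $\rho(\omega)$ of bounded volume distortion, while $\rho(k)$ has bounded distortion as $K$ is compact. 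As $\rho(ka\omega)$ carries $\rho(x_j)\mathbb Z^d$ onto $\rho(g)\mathbb Z^d$ and $V_{\beta_0}$ onto $\rho(ka\omega)V_{\beta_0}$, the group $\Lambda_g=\rho(g)\mathbb Z^d\cap\rho(ka\omega)V_{\beta_0}$ is a discrete primitive subgroup of $\rho(g)\mathbb Z^d$ of rank $\dim V_{\beta_0}$, and $\Vol(\Lambda_g)\asymp e^{\beta_0(a)\dim V_{\beta_0}}\le(C_1 r)^{\dim V_{\beta_0}}$, the asserted bound (with $C$ depending only on $\mathbf G,\Gamma,\rho$).

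Finally I would produce $\tilde g$. The real span of $\Lambda_g$ is $\rho(ka\omega)V_{\beta_0}=\rho(k)V_{\beta_0}$ (as $a,\omega$ stabilize $V_{\beta_0}$), so it suffices to find $\tilde g\in\mathcal C_{\bar I}$ with $\rho(\tilde g k)V_{\beta_0}=V_{\beta_0}$, i.e. $\tilde g k\in\mathbf P_{\beta_0}(\mathbb R)$; then $\rho(\tilde g)\Lambda_g$ is the image of the full-rank lattice $\rho(x_j)\mathbb Z^d\cap V_{\beta_0}$ under the automorphism $\rho(\tilde g k a\omega)$ of $V_{\beta_0}$, hence a full-rank lattice in $V_{\beta_0}(\mathbb R)$ and therefore Zariski dense in $V_{\beta_0}$. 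To see such $\tilde g$ exists I would use that the locally closed $\mathbb Q$-subvarieties $\mathbf P_{\beta_0}w_i\mathbf F_{w_i}$ $(i\in I)$ cover $\mathbf G$ and that the choice of $\bar I$ already forces $\bigcup_{i\in\bar I}\mathbf P_{\beta_0}w_i\mathbf F_{w_i}=\mathbf G$ as varieties (each constituent minimal-parabolic Bruhat cell is irreducible with Zariski-dense $\mathbb Q$-points, hence contained in one of the chosen cells), so that on passing to $\mathbb R$-points $\mathbf G(\mathbb R)=\bigcup_{i\in\bar I}\mathbf P_{\beta_0}(\mathbb R)w_i\mathbf F_{w_i}(\mathbb R)=\mathbf P_{\beta_0}(\mathbb R)\,\mathcal C_{\bar I}$. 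Applying this to $k^{-1}$, write $k^{-1}=p\,\tilde g$ with $p\in\mathbf P_{\beta_0}(\mathbb R)$, $\tilde g\in\mathcal C_{\bar I}$; then $\tilde g k=p^{-1}\in\mathbf P_{\beta_0}(\mathbb R)$, as needed, completing the construction.

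The step I expect to be the main obstacle is the last one: the passage from the $\mathbb Q$-rational Bruhat covering to the covering $\mathbf G(\mathbb R)=\mathbf P_{\beta_0}(\mathbb R)\mathcal C_{\bar I}$ is the genuinely algebro-geometric point and must be argued carefully at the level of varieties and their real points; if one wishes to sidestep any connected-component subtlety, one may harmlessly enlarge $\mathcal C_{\bar I}$ by a fixed finite subset of $\mathbf G(\mathbb R)$, which only affects the constant $C$. The softer technical point is the uniform weight bound $\beta(a)\ge\beta_0(a)-C_0$ on the Siegel torus, which has to be extracted from the explicit Siegel bound together with the highest-weight combinatorics.
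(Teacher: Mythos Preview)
Your proof is correct and follows essentially the same approach as the paper. Both arguments write $g$ in Siegel normal form, use the highest-weight combinatorics together with the Siegel bound $\alpha(a)\le\eta$ for simple $\mathbb Q$-roots to obtain $e^{\beta_0(a)}\lesssim r$, define $\Lambda_g$ as the image of the rational lattice $\rho(x_j)\mathbb Z^d\cap V_{\beta_0}$, and then use the Bruhat decomposition on the compact part to produce $\tilde g\in\mathcal C_{\bar I}$. The only cosmetic difference is that the paper first absorbs $\omega$ into the compact factor via the observation that $\{a\omega a^{-1}:a\in T_\eta,\ \omega\in\Omega\}$ is relatively compact and then runs Bruhat on $\tilde k^{-1}$, whereas you keep $\omega$ separate and use that $\omega\in\mathbf P_{\beta_0}(\mathbb R)$ already; both lead to the same $\Lambda_g$ and the same $\tilde g$ up to a factor in $\mathbf P_{\beta_0}(\mathbb R)$. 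Your explicit discussion of why $\mathbf G(\mathbb R)=\bigcup_{i\in\bar I}\mathbf P_{\beta_0}(\mathbb R)w_i\mathbf F_{w_i}(\mathbb R)$ is a welcome addition, since the paper simply asserts this step.
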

\begin{proof}
By the reduction theory of arithmetic subgroups, $\mathbf G(\mathbb R)$ can be written as $$\mathbf G(\mathbb R)=S_{\eta,\Omega}\cdot\mathcal K\cdot\Gamma$$ for some Siegel set $S_{\eta,\Omega}=K\cdot T_\eta\cdot\Omega$ and some finite subset $\mathcal K\subset\mathbf G(\mathbb Q)$, where $$T_{\eta}=\{a\in T(\mathbb R):\lambda(a)\leq\eta,\;\lambda\textup{ a simple root in }\Delta\}$$ and the finite set $\mathcal K$ satisfies the property that $$\mathbf G(\mathbb Q)=\mathbf P_0(\mathbb Q)\cdot\mathcal K\cdot\Gamma.$$ Then there exist $k\in K$, $a\in T_\eta$, $u\in\Omega$, $\gamma\in\Gamma$ and $x\in\mathcal K\subset\mathbf G(\mathbb Q)$ such that $$g=k\cdot a\cdot u\cdot x\cdot\gamma.$$
Note that by the definition of $T_\eta$, the subset $$\{aua^{-1}: a\in T_\eta\textup{ and }u\in\Omega\}$$ is relatively compact.  So we can write $$g=\tilde k\cdot a\cdot x\cdot\gamma$$ for $\tilde k=k\cdot (a\cdot u\cdot a^{-1})$ in a fixed compact subset in $\mathbf G(\mathbb R)$.

Now since $\delta(\rho(g)\cdot\mathbb Z^d)\leq r$, there exists $y\in\mathbb Z^d\setminus\{0\}$ such that $\rho(\tilde k\cdot a\cdot x)\cdot y$ is a shortest vector in $\rho(g)\cdot\mathbb Z^d$ and $$\|\rho(a\cdot x)\cdot y\|\asymp\|\rho(\tilde k\cdot a\cdot x)\cdot y\|\leq r.$$ We write $$\rho(x)\cdot y=\sum_{\text{weight }\beta}y_\beta$$ according to the decomposition of $V=\oplus_{\beta}V_{\beta}$ in $\rho$ into the weight spaces $V_\beta$ relative to $\mathbf T$, where $y_\beta\in V_{\beta}$. Let $\tilde\beta$ be a weight among the weights $\beta$'s in the summation above such that $y_{\tilde\beta}\neq 0$. Then
\begin{align}\label{eqn1}
\|\rho(a\cdot x)\cdot y\|=\|e^{\tilde\beta(a)}y_{\tilde\beta}+\cdots\|\lesssim r.
\end{align}
 Note that $y\in\mathbb Z^d$ and $x\in\mathcal K\subset\mathbf G(\mathbb Q)$, and there is a positive lower bound for $\|y_{\tilde\beta}\|$ depending only on $\rho$ and $\mathcal K$. So by equation~\eqref{eqn1}, we have $$e^{\tilde\beta(a)}\lesssim r.$$ On the other hand, there exists a unique discrete primitive subgroup $\tilde\Lambda\subset\mathbb Z^d$ such that $\rho(x)\cdot\tilde\Lambda$ is a discrete and Zariski-dense subgroup in $V_{\beta_0}$, and for any $w\in\tilde\Lambda\setminus\{0\}$ we have 
\begin{align}\label{eqn2}
\|\rho(\tilde k\cdot a\cdot x)\cdot w\|\asymp\|\rho(a)\cdot\rho(x)\cdot w\|=e^{\beta_0(a)}\|\rho(x)\cdot w\|.
\end{align} 
By the condition $a\in T_\eta$ and the relation between $\tilde\beta$ and $\beta_0$ according to the structure of the representation $\rho$, one can compute that $$e^{\beta_0(a)}\lesssim_{\eta} e^{\tilde\beta(a)}\lesssim r.$$ Consequently, from equation~\eqref{eqn2} one can deduce that for any $w\in\tilde\Lambda\setminus\{0\}$ $$\|\rho(\tilde k\cdot a\cdot x)w\|\lesssim r\cdot\|\rho(x)w\|.$$ Let $\Lambda_g=\rho(g)(\rho(\gamma^{-1})\tilde\Lambda)$. Then we have $$\Lambda_g\subset\rho(g)\cdot\mathbb Z^d,\quad\Vol(\Lambda_g)\lesssim r^{\rank\Lambda_g}=r^{\dim V_{\beta_0}}\textup{ and }\rho(\tilde k^{-1})\cdot\Lambda_g\subset V_{\beta_0}.$$ We know that $$\tilde k^{-1}\in\mathbf G(\mathbb R)=\bigcup_{w\in{_{\mathbb Q}\overline{\mathcal W}}}\mathbf P_{\beta_0}(\mathbb R)\cdot w\cdot\mathbf F_{w}(\mathbb R).$$ So there exists $\tilde g\in\mathcal C$ such that $$\tilde k^{-1}\in\mathbf P_{\beta_0}(\mathbb R)\cdot\tilde g.$$ By the fact that $\mathbf P_{\beta_0}$ stabilizes $V_{\beta_0}$, we conclude that $$\rho(\tilde g)\cdot\Lambda_g\subset V_{\beta_0}.$$ Note that all the implicit constants above depend only on $\mathbf G$, $\Gamma$ and $\rho$. This completes the proof of the proposition.
\end{proof}

Let $g$ be an element in $S_\rho(\psi)^c\cap R_u(\overline{\mathbf P}_0)(\mathbb R)$. Then from Definition~\ref{def11}, one can deduce that there exists a divergent sequence $\{t_k\}\subset\mathbb R_+$ such that $$\delta(\rho(a_{t_k} g)\cdot\mathbb Z^d)\leq\psi(t_k).$$ Let $\epsilon>0$ be a sufficiently small number. By the definition of $\tau=\tau(\psi)$, we have $$\psi(t)\leq C_\epsilon\cdot e^{-(\tau-\epsilon)t}\quad(\forall t>0)$$ for some $C_\epsilon>0$ depending only on $\epsilon$. Hence $$\delta(\rho(a_{t_k} g)\cdot\mathbb Z^d)\leq C_\epsilon\cdot e^{-(\tau-\epsilon)t_k}.$$ Here by adding a multiplicative constant in the inequality above, we may assume that $t_k\in\mathbb N$. By Proposition~\ref{p61}, for each $k\in\mathbb N$, there exist $\tilde g_k\in\mathcal C$ and a discrete subgroup $\Lambda_k\subset \rho(a_{t_k}g)\cdot\mathbb Z^d$ of rank $\dim V_{\beta_0}$ such that $$\Vol(\Lambda_k)\lesssim (C_\epsilon\cdot e^{-(\tau-\epsilon)t_k})^{\dim V_{\beta_0}}\textup{ and }\rho(\tilde g_k)\cdot\Lambda_k\textup{ is Zariski dense in } V_{\beta_0}.$$ Since ${_{\mathbb Q}\overline{\mathcal W}}$ is finite, by passing to a subsequence, we may write $\tilde g_k=w\cdot f_{k}\in\mathcal C$ where $w\in{_{\mathbb Q}\overline{\mathcal W}}$ is fixed for all $k\in\mathbb N$ and $f_{k}\in\mathbf F_{w}(\mathbb R)$. Then by definition $$\tilde q_k:=\tilde g_k\cdot a_{t_k}\cdot g=w\cdot f_{k}\cdot a_{t_k}\cdot g=w\cdot a_{t_k}\cdot (a_{-t_k}f_{k}a_{t_k})\cdot g$$ $$q_k:=(wa_{-t_k}w^{-1})\cdot\tilde q_k=w\cdot(a_{-t_k}f_{k}a_{t_k})\cdot g$$ are rational elements in $\mathbf G(\mathbb R)$. 

In the $\dim V_{\beta_0}$-exterior product space of $V$, let $e_{V_{\beta_0}}=e_1\wedge e_2\wedge\cdots\wedge e_{\dim V_{\beta_0}}$ be the unit vector which represents the vector space $V_{\beta_0}$ (see \S\ref{intro}), and $\wedge^{\dim V_{\beta_0}}\Lambda_k$ the vector in $\bigwedge^{\dim V_{\beta_0}}V$ which is constructed from a $\mathbb Z$-basis of $\Lambda_k$ and represents the lattice $\Lambda_k$. Then we can compute that $$\height(q_k)=e^{\beta_0(wa_{-t_k}w^{-1})\cdot\dim V_{\beta_0}}\cdot\height(\tilde q_k)$$ $$\height(\tilde q_k)\cdot e_{V_{\beta_0}}=\rho_{\beta_0}(\tilde g_k)\cdot\left(\wedge^{\dim V_{\beta_0}}\Lambda_k\right),\quad\rho_{\beta_0}(\tilde g_k^{-1})\cdot e_{V_{\beta_0}}=\left(\wedge^{\dim V_{\beta_0}}\Lambda_k\right)/\height(\tilde q_k)$$ 
\begin{align*}
\|\rho_{\beta_0}(\tilde g_k^{-1})\cdot e_{V_{\beta_0}}\|=&\Vol(\Lambda_k)/\height(\tilde q_k)\\
\lesssim& (C_\epsilon\cdot e^{-(\tau-\epsilon)t_k})^{\dim V_{\beta_0}}\cdot e^{\beta_0(wa_{-t_k}w^{-1})\cdot\dim V_{\beta_0}}/\height(q_k).
\end{align*}
The implicit constants here depend only on $\mathbf G$, $\Gamma$ and $\rho$. For convenience, we denote by $$\lambda_w(\epsilon):=(\beta_0(wa_{-1}w^{-1})-(\tau-\epsilon))\cdot\dim V_{\beta_0}.$$ 

\begin{definition}\label{def62}
Let $w\in{_{\mathbb Q}\overline{\mathcal W}}$. For any $R>0$, define $$E_{w}(R):=\{f\in\mathbf F_{w}(\mathbb R):\|\rho_{\beta_0}(w fw^{-1})\cdot e_{V_{\beta_0}}\|\leq R\}.$$ We also define the following morphism $$\Psi_{w}:\mathbf F_{w}(\mathbb R)\to\bigwedge^{\dim V_{\beta_0}} V,\quad\Psi_{w}(x)=\rho_{\beta_0}(wxw^{-1})\cdot e_{V_{\beta_0}}.$$ Note that $w\mathbf F_{w}w^{-1}\subset R_u(\overline{\mathbf P}_{\beta_0})$ and the stabilizer of $\mathbb C\cdot e_{V_{\beta_0}}$ is equal to $\mathbf P_{\beta_0}$. Hence $\Psi_{w}$ is an isomorphism onto its image.
\end{definition}

From the discussion above, we know that 
\begin{align*}
\|\rho_{\beta_0}(wf_{k}^{-1}w^{-1})\cdot e_{V_{\beta_0}}\|\lesssim&\|\rho_{\beta_0}(\tilde g_k^{-1})\cdot e_{V_{\beta_0}}\|\lesssim C_\epsilon^{\dim V_{\beta_0}}\cdot e^{\lambda_w(\epsilon)\cdot t_k}/\height(q_k)
\end{align*}
and by definition of $E_{w}(R)$, we have
\begin{align*}
g=&(a_{-t_k}f_{k}^{-1}a_{t_k})\cdot w^{-1}\cdot q_k\\
\in&\left(a_{-t_k}\cdot E_{w}\left(C_1\cdot C_\epsilon^{\dim V_{\beta_0}}\cdot e^{\lambda_w(\epsilon)\cdot t_k}/\height(q_k)\right)\cdot a_{t_k}\right)\cdot w^{-1}\cdot q_k
\end{align*}
for some constant $C_1>0$ depending only on $\mathbf G$, $\Gamma$ and $\rho$. Moreover, by the structure of the representation $\rho_{\beta_0}$, we know that $$\|\rho_{\beta_0}(wf_{k}^{-1}w^{-1})\cdot e_{V_{\beta_0}}\|=\|\Psi_{w}(f_{k}^{-1})\|\gtrsim\|e_{V_{\beta_0}}\|=1$$ so $$\height(q_k)\leq C_2\cdot C_\epsilon^{\dim V_{\beta_0}}\cdot e^{\lambda_w(\epsilon)\cdot t_k}$$ for some constant $C_2>0$ depending only on $\mathbf G$, $\Gamma$ and $\rho$. Therefore, $q_k$ is a rational element in $\mathbf G(\mathbb R)$ with $$\height(q_k)\leq C_2\cdot C_\epsilon^{\dim V_{\beta_0}}\cdot e^{\lambda_w(\epsilon)\cdot t_k}$$ and $w^{-1}\cdot q_k\in R_u(\overline{\mathbf P}_0)(\mathbb R)$. In the following, we take $C_0=\max\{C_1,C_2\}$.

\begin{definition}\label{def63}
For any $w\in{_{\mathbb Q}\overline{\mathcal W}}$ and any $\epsilon>0$, define $\mathcal E_{w,\epsilon}(\psi)$ to be the subset of elements $g\in R_u(\overline{\mathbf P}_0)(\mathbb R)$ for which there exist a divergent sequence $\{t_k\}\subset\mathbb N$ and a sequence of rational elements $\{q_k\}\subset\mathbf G(\mathbb R)$ such that $$g\in\left(a_{-t_k}E_{w}(C_0\cdot C_\epsilon^{\dim V_{\beta_0}}\cdot e^{\lambda_w(\epsilon)\cdot t_k}/\height(q_k))a_{t_k}\right)w^{-1}q_k$$  $$\height(q_k)\leq C_0\cdot C_\epsilon^{\dim V_{\beta_0}}\cdot e^{\lambda_w(\epsilon)\cdot t_k}$$ and $w^{-1}q_k\in R_u(\overline{\mathbf P}_0)(\mathbb R)$.
\end{definition}

\begin{lemma}\label{l64}
Let $w\in{_{\mathbb Q}\overline{\mathcal W}}$. Let $q$ be a rational element in $\mathbf G(\mathbb R)$ with $w^{-1}\cdot q\in R_u(\overline{\mathbf P}_0)(\mathbb R)$. Then there exists a constant $\theta_w>0$ depending only on $w$ such that $d(q)\geq\theta_w$.
\end{lemma}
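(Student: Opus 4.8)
The plan is to show that $d(q)$ is, up to fixed multiplicative constants, a nonzero rational number with bounded denominator, and hence bounded away from $0$ by a constant depending only on the finitely much data indexed by $i$.

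The first step is to write $q=w_i\cdot g$ with $g\in R_u(\overline{\mathbf P}_0)(\mathbb R)$ and to strip off the non-arithmetic factor. By Corollary~\ref{c23} there are $h_i\in\mathbf H_{w_i}(\mathbb R)$ and $f_i\in\mathbf F_{w_i}(\mathbb Q)$ with $g=h_i f_i$; put $\phi:=w_i f_i w_i^{-1}$, which lies in $R_u(\overline{\mathbf P}_{\beta_0})(\mathbb Q)$ since $\mathbf F_{w_i}\subset w_i^{-1}R_u(\overline{\mathbf P}_{\beta_0})w_i$ and $w_i,f_i$ are rational. Because $q=(w_ih_iw_i^{-1})\cdot\phi\cdot w_i$ and $w_ih_iw_i^{-1}\in\mathbf P_{\beta_0}$ fixes $V_{\beta_0}$ pointwise, one has $\rho(q)\mathbb Z^d\cap V_{\beta_0}=\rho(\phi w_i)\mathbb Z^d\cap V_{\beta_0}$, so
\[ d(q)=\Vol\bigl(\rho(\phi w_i)\mathbb Z^d\cap V_{\beta_0}\bigr), \]
a quantity in which $h_i$ no longer appears.

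Next I would pass to $\bigwedge^m V$ with $m:=\dim V_{\beta_0}$, as in \S\ref{ubd}. Set $\Lambda:=\rho(\phi w_i)\mathbb Z^d\cap V_{\beta_0}$ and $\Gamma_0:=\rho(\phi w_i)^{-1}\Lambda=\mathbb Z^d\cap\rho(\phi w_i)^{-1}V_{\beta_0}$, a saturated rank-$m$ sublattice of $\mathbb Z^d$; its Pl\"ucker vector $P_0\in\bigwedge^m\mathbb Z^d$ is primitive and integral, with $\|P_0\|$ comparable to $\Vol(\Gamma_0)$. Since $\Lambda=\rho(\phi w_i)\Gamma_0$, the Pl\"ucker vector of $\Lambda$ is $\rho_{\beta_0}(\phi w_i)P_0$ and $d(q)$ is comparable to $\|\rho_{\beta_0}(\phi w_i)P_0\|$. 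Writing $\rho_{\beta_0}(\phi w_i)^{-1}e_{V_{\beta_0}}=\nu^{-1}P_0$ for the appropriate $\nu\in\mathbb Q^{\times}$ (both sides spanning the line $\bigwedge^m\rho(\phi w_i)^{-1}V_{\beta_0}$), one gets $\rho_{\beta_0}(\phi w_i)P_0=\nu\,e_{V_{\beta_0}}$, so $d(q)$ is comparable to $|\nu|\cdot\|e_{V_{\beta_0}}\|$, and it suffices to bound $|\nu|$ from below by a constant depending only on $i$. To do this I compute $\rho_{\beta_0}(\phi w_i)^{-1}e_{V_{\beta_0}}=\rho_{\beta_0}(w_i^{-1})\,\rho_{\beta_0}(\phi^{-1})\,e_{V_{\beta_0}}$. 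As $\phi^{-1}\in R_u(\overline{\mathbf P}_{\beta_0})$ acts on $V=V_{\beta_0}\oplus W$ (with $W$ the sum of the remaining weight spaces, all of weight strictly below $\beta_0$) by the unitriangular pattern that is the identity on the $V_{\beta_0}$-block, we have $\rho_{\beta_0}(\phi^{-1})e_{V_{\beta_0}}=e_{V_{\beta_0}}+R$ with $R$ a rational vector of $\bigwedge^m V$ supported on weight spaces of weight strictly below $m\beta_0$. Applying $w_i^{-1}$ sends $e_{V_{\beta_0}}$ (weight $m\beta_0$) to $c_i\,e_{V_{\beta_0'}}$, $\beta_0':=w_i^{-1}\!\cdot\beta_0$, for a fixed nonzero rational $c_i$ depending only on $i$, and sends $R$ into weight spaces of weight $\neq m\beta_0'$; since $\beta_0$ is a highest weight, a short multiplicity computation shows the weight-$m\beta_0'$ subspace of $\bigwedge^m V$ is exactly $\bigwedge^m V_{\beta_0'}=\mathbb C\,e_{V_{\beta_0'}}$. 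Projecting the identity $\nu^{-1}P_0=c_i\,e_{V_{\beta_0'}}+R'$ onto that line and using that the $e_{V_{\beta_0'}}$-coordinate of the integral vector $P_0$ lies in $\tfrac1D\mathbb Z$ for a fixed integer $D$ (the denominator of the $\mathbb Q$-rational projection onto this weight line), the coordinate $\nu c_i$ is a nonzero element of $\tfrac1D\mathbb Z$, so $|\nu|\geq(D|c_i|)^{-1}$ and finally $d(q)\geq\theta_i$ for a $\theta_i>0$ depending only on $i$.

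The main obstacle is exactly this last integrality bookkeeping in $\bigwedge^m V$: one has to know that the relevant top-type weight space of $\bigwedge^m V$ is one–dimensional and spanned by the integral vector $e_{V_{\beta_0'}}$ — this is where the highest-weight hypothesis on $\beta_0$ is essential — and that the projection of $\bigwedge^m\mathbb Z^d$ onto it has bounded denominator, so that the $e_{V_{\beta_0'}}$-coordinate of the primitive integral Pl\"ucker vector $P_0$ cannot be nonzero and arbitrarily small. The remaining ingredients (removing $h_i$ via the pointwise fixing of $V_{\beta_0}$, the passage to $\bigwedge^m V$, and the ``lower unitriangular'' shape of $\rho(\phi^{-1})$ with respect to $V_{\beta_0}\oplus W$) are routine once these objects are in place.
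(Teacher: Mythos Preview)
Your argument is correct, but it takes a genuinely different route from the paper's. The paper's proof is a short compactness argument: after writing $w_i^{-1}q=u\cdot v$ with $u\in\mathbf H_{w_i}(\mathbb R)$ and $v\in\mathbf F_{w_i}(\mathbb Q)$ via Corollary~\ref{c23}, it reduces $v$ modulo the cocompact arithmetic lattice $\Gamma\cap\mathbf F_{w_i}(\mathbb R)$ to a representative $\tilde v$ in a fixed bounded fundamental domain $\tilde\Omega_{\mathbf F_{w_i}}$, so that $\rho(q)\mathbb Z^d\cap V_{\beta_0}=\rho(w_i\tilde v)\mathbb Z^d\cap V_{\beta_0}$; since $w_i\tilde v$ ranges over a fixed bounded set, the covolume is bounded below by a constant depending only on $w_i$ and $\tilde\Omega_{\mathbf F_{w_i}}$. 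No exterior-power or weight-multiplicity computation is needed.

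By contrast, you bypass the fundamental domain entirely and extract the lower bound arithmetically in $\bigwedge^m V$: the key point in your approach---that the weight-$m\beta_0'$ subspace of $\bigwedge^m V$ is one-dimensional---is indeed a consequence of $\beta_0'=w_i^{-1}\!\cdot\beta_0$ being an extreme point of the weight polytope (so $m\beta_0'=\sum\beta_j$ forces all $\beta_j=\beta_0'$), and the bounded-denominator claim follows because the weight projections are $\mathbb Q$-linear for a $\mathbb Q$-split torus. The paper's proof is shorter and avoids this multiplicity check; yours is more explicit, in principle yielding a computable $\theta_i$ in terms of $c_i$ and the denominator $D$ of the weight projection, and does not rely on the existence of a bounded fundamental domain for $\mathbf F_{w_i}(\mathbb R)/(\Gamma\cap\mathbf F_{w_i}(\mathbb R))$.
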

\begin{proof}
By definition, we know that $R_u(\overline{\mathbf P}_0)=\mathbf H_{w}\cdot\mathbf F_{w}.$ By Corollary~\ref{c23}, let $$w^{-1}\cdot q=u\cdot v$$ for some $u\in\mathbf H_{w}(\mathbb R)$ and $v\in\mathbf F_{w}(\mathbb Q)$. Let $\tilde\Omega_{\mathbf F_{w}}$ be a bounded fundamental domain of $\mathbf F_{w}(\mathbb R)/(\Gamma\cap\mathbf F_{w}(\mathbb R))$ in $\mathbf F_{w}(\mathbb R)$. We may write $$v=\tilde v\cdot\gamma$$ for some $\tilde v\in\tilde\Omega_{\mathbf F_{w}}$ and $\gamma\in\Gamma\cap\mathbf F_{w}(\mathbb R)$. Then $$q\cdot\mathbb Z^d=(w\cdot u\cdot v)\cdot\mathbb Z^d=(wuw^{-1})\cdot(w\tilde v\cdot\mathbb Z^d).$$ By the fact that $w\mathbf H_{w}w^{-1}$ fixes every element in $V_{\beta_0}$ and $\tilde v$ is in the bounded subset $\tilde{\Omega}_{\mathbf F_{w}}$, we can conclude that the co-volume of $(w\tilde v\cdot\mathbb Z^d)\cap V_{\beta_0}$ has a lower bound depending only on $w$ and $\tilde\Omega_{\mathbf F_{w}}$, and so does $q\cdot\mathbb Z^d\cap V_{\beta_0}$. This completes the proof of the lemma.
\end{proof}


\begin{lemma}\label{l65}
Let $w\in{_{\mathbb Q}\overline{\mathcal W}}$ and $\epsilon>0$. If $\mathcal E_{w,\epsilon}(\psi)\neq\emptyset$, then $\tau-\epsilon\leq\beta_0(wa_{-1}w^{-1})$.
\end{lemma}
\begin{proof}
Let $g\in\mathcal E_{w,\epsilon}(\psi)$. By definition, there exist a divergent sequence $\{t_k\}\subset\mathbb N$ and a sequence of rational elements $\{q_k\}\subset\mathbf G(\mathbb R)$ such that $$g\in\left(a_{-t_k}E_{w}(C_0\cdot C_\epsilon^{\dim V_{\beta_0}}\cdot e^{\lambda_w(\epsilon)\cdot t_k}/\height(q_k))a_{t_k}\right)w^{-1}q_k$$  $$\height(q_k)\leq C_0\cdot C_\epsilon^{\dim V_{\beta_0}}\cdot e^{\lambda_w(\epsilon)\cdot t_k}$$ and $w^{-1}q_k\in R_u(\overline{\mathbf P}_0)(\mathbb R)$. By Lemma~\ref{l64}, we have $$\theta_w\leq C_0\cdot C_\epsilon^{\dim V_{\beta_0}}\cdot e^{\lambda_w(\epsilon)\cdot t_k}.$$ Then by taking $t_k\to\infty$, we obtain that $\tau-\epsilon\leq\beta_0(wa_{-1}w^{-1})$. This completes the proof of the lemma.
\end{proof}

\begin{proposition}\label{p66}
For any $\epsilon>0$, we have $$S_\rho(\psi)^c\cap R_u(\overline{\mathbf P}_0)(\mathbb R)\subseteq\bigcup_{w\in{_{\mathbb Q}\overline{\mathcal W}}}\mathcal E_{w,\epsilon}(\psi).$$ Moreover, for any $w\in{_{\mathbb Q}\overline{\mathcal W}}$, $\beta_0(wa_{-1}w^{-1})\leq\beta_0(a_{-1})$.
\end{proposition}
\begin{proof}
The inclusion $$S_\rho(\psi)^c\cap R_u(\overline{\mathbf P}_0)(\mathbb R)\subseteq\bigcup_{w\in{_{\mathbb Q}\overline{\mathcal W}}}\mathcal E_{w,\epsilon}(\psi)$$ follows from the discussion above. Note that for any $w\in{_{\mathbb Q}\overline{\mathcal W}}$, $\beta_{w}(a):=\beta_0(waw^{-1})$ $(\forall a\in\mathbf T)$ is a weight in the representation $\rho$, and $\beta_0$ is the highest weight. It implies that $\beta_{w}(a_{-1})\leq\beta_0(a_{-1})$. This completes the proof of the proposition.
\end{proof}

In the following, we will consider the subsets $S_\rho(\psi)^c\cap\mathcal E_{w,\epsilon}(\psi)$ $(w\in{_{\mathbb Q}\overline{\mathcal W}})$ and compute upper bounds for the Hausdorff dimensions of these subsets. These upper bounds will give an upper bound for $\dim_H(S_\rho(\psi)^c\cap R_u(\overline{\mathbf P}_0)(\mathbb R))$ by Proposition~\ref{p66}. Note that we are only interested in the non-empty subsets $\mathcal E_{w,\epsilon}(\psi)\neq\emptyset$. So here we will only consider the elements $w\in{_{\mathbb Q}\overline{\mathcal W}}$ which satisfy $\tau\leq\beta_0(wa_{-1}w^{-1}).$ For the elements $w\in{_{\mathbb Q}\overline{\mathcal W}}$ with $\tau>\beta_0(wa_{-1}w^{-1})$, we may choose $\epsilon>0$ sufficiently small so that $\tau-\epsilon>\beta_0(wa_{-1}w^{-1})$, and then by Lemma~\ref{l65}, $\mathcal E_{w,\epsilon}(\psi)=\emptyset$. For notational convenience, we may set $C_0\cdot C_\epsilon^{\dim V_{\beta_0}}=1$ as it does not affect the computations in the rest of this section. 

Now let $\epsilon>0$ be a sufficiently small number and fix $w\in{_{\mathbb Q}\overline{\mathcal W}}$ with $\tau\leq\beta_0(wa_{-1}w^{-1})$. To compute the Hausdorff dimension of $S_\rho(\psi)^c\cap\mathcal E_{w,\epsilon}(\psi)$, we first need to estimate the volume of the subset $E_{w}(R)$ in ${\mathbf F}_{w}(\mathbb R)$, which can be written as $$\Vol(E_{w}(R))=\mu_{\mathbf F_{w}}(\Psi_{w}^{-1}(B_R))$$ where $B_R$ is the ball of radius $R$ centered at $0$ in $\bigwedge_{i=1}^{\dim V_{\beta_0}}V$ and $\mu_{\mathbf F_{w}}$ is the Haar measure on ${\mathbf F}_{w}(\mathbb R)$. We need the following result about the asymptotic volume estimates of algebraic varieties.

\begin{theorem}[{\cite[Corollary 16.3]{BO12}}]\label{thm67}
Let $\mathcal O$ be a closed orbit of a group $\mathbf H(\mathbb R)$ of real points of an algebraic group $\mathbf H$ in an $\mathbb R$-vector space $V$, $\mu$ an $\mathbf H(\mathbb R)$-invariant measure on $\mathcal O$ and $\|\cdot\|$ a Euclidean norm on $V$. Let $B_R=\{v\in V: \|v\|\leq R\}$. Then $$\mu(B_R)\sim cR^a(\log R)^b\;(\textup{as }R\to\infty)$$ for some $a\in\mathbb Q_{\geq0}$, $b\in\mathbb Z_{\geq0}$ and $c>0$.
\end{theorem}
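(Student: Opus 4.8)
The plan is to convert the volume estimate into a Laplace-type integral of an exponential over a dilated rational polyhedron, via a polar decomposition of the orbit adapted to the norm. First I would make the usual harmless reductions: replacing $V$ by the $\mathbb{R}$-span of $\mathcal{O}$ and $\mathbf{H}$ by its image in $\GL(V)$, so that $\mathbf{H}$ acts faithfully and $\mathcal{O}$ spans $V$; passing to the identity component of $\mathbf{H}(\mathbb{R})$, which changes $\mu(B_R)$ only by a bounded factor; and replacing $\|\cdot\|$ by an equivalent Euclidean norm invariant under a fixed maximal compact subgroup $K\subset\mathbf{H}(\mathbb{R})$, which traps $B_R$ between $B_{c_1R}$ and $B_{c_2R}$ and hence only affects the constant $c$. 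Fixing a base point $v_0\in\mathcal{O}$ with stabilizer $\mathbf{L}=\Stab_{\mathbf{H}}(v_0)$ identifies $\mathcal{O}\cong\mathbf{H}(\mathbb{R})/\mathbf{L}(\mathbb{R})$, which is an affine homogeneous variety since $\mathcal{O}$ is closed in $V$, and $\mu$ becomes an invariant measure on this quotient.

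The heart of the matter is a Cartan-type (polar) decomposition of the orbit, which I would invoke in the following shape: there is a $\mathbb{Q}$-split torus $\mathbf{A}\subset\mathbf{H}$, a closed cone $\mathfrak{a}^{+}\subset\Lie(\mathbf{A})$, and a decomposition $\mathbf{H}(\mathbb{R})=K\cdot\exp(\mathfrak{a}^{+})\cdot\mathbf{L}(\mathbb{R})$, such that along $\mathfrak{a}^{+}$ the norm grows as $\|\exp(H)\cdot v_0\|\asymp e^{\lambda(H)}$ for a rational linear functional $\lambda$ — the weight by which $\mathbf{A}$ acts on the $\mathfrak{a}^{+}$-dominant component of $v_0$ — while in these coordinates $\mu$ has density $J(H)\,dH$ on $\mathfrak{a}^{+}$, where $J(H)=\sum_{\alpha}c_{\alpha}e^{\alpha(H)}$ is a finite sum of exponentials in the restricted roots occurring in $\mathfrak{h}/\mathfrak{l}$, dominated on the deep part of $\mathfrak{a}^{+}$ by a single exponential $e^{2\rho_{\mathcal{O}}(H)}$ for an explicit rational functional $\rho_{\mathcal{O}}$. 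For symmetric $\mathcal{O}$ this is classical (the generalized Cartan decomposition of the symmetric variety together with the hyperbolic-sine Weyl integration formula); in general it should follow from the structure theory of affine homogeneous varieties, if necessary by inducting along a factorization of $\mathbf{H}$ into its unipotent radical and a reductive Levi subgroup.

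Granting this, one obtains $\mu(B_R)\asymp\int_{\{H\in\mathfrak{a}^{+}\,:\,\lambda(H)\le\log R\}}e^{2\rho_{\mathcal{O}}(H)}\,dH$. Setting $t=\log R$, the domain of integration becomes $t\cdot P$ for the fixed rational polyhedron $P=\{H\in\mathfrak{a}^{+}:\lambda(H)\le1\}$ (truncated along the directions in which $\lambda$ degenerates, which is harmless since finiteness of $\mu(B_R)$ forces $2\rho_{\mathcal{O}}$ to decay there), so the last step would be the elementary estimate that for a rational polyhedron $P$ and a linear functional $\psi$ bounded above on $P$, $\int_{tP}e^{\psi(H)}\,dH\sim c\,t^{b}e^{at}$ as $t\to\infty$, where $a=\max_{H\in P}\psi(H)\in\mathbb{Q}_{\ge0}$ (the optimum of a rational linear program is rational, and it is $\ge\psi(0)=0$) and $b\in\mathbb{Z}_{\ge0}$ is the dimension of the face of $P$ on which $\psi$ is maximal; one proves this by slicing $P$ along the level sets of $\psi$. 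With $\psi=2\rho_{\mathcal{O}}$ this yields $\mu(B_R)\sim c\,(\log R)^{b}R^{a}$ with $a\in\mathbb{Q}_{\ge0}$ and $b\in\mathbb{Z}_{\ge0}$, as asserted. (Alternatively, the mere existence of an expansion $\sum_i c_iR^{a_i}(\log R)^{b_i}$ already follows from the fact that $R\mapsto\mu(B_R)$ is definable in an o-minimal expansion of $\mathbb{R}$ containing the exponential; but it is the polytope computation that pins down the rationality of the leading exponent.)

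The hard part will be the polar-decomposition step: producing $\mathbf{H}(\mathbb{R})=K\cdot\exp(\mathfrak{a}^{+})\cdot\mathbf{L}(\mathbb{R})$ together with the precise exponential-sum shape of the Jacobian $J$ when $\mathbf{H}$ and $\mathbf{L}$ are arbitrary, so that $\mathbf{H}$ need not be reductive and $\mathcal{O}$ need not be symmetric or spherical. That is exactly where the cited reference concentrates its effort; once it is available, the passage to integrals over polytopes and the extraction of the $cR^a(\log R)^b$ asymptotics are routine.
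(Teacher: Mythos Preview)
The paper does not prove this statement: Theorem~\ref{thm67} is quoted verbatim from \cite[Corollary~16.3]{BO12} and then applied in Corollary~\ref{c68}, with no argument given. So there is no ``paper's own proof'' to compare your proposal against.

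That said, your outline is a reasonable caricature of how Benoist--Oh actually proceed: they too reduce to an integral of an exponential-sum density over a rational polyhedral cone via a polar (Cartan-type) decomposition adapted to the stabilizer, and then extract the $cR^a(\log R)^b$ asymptotics from the combinatorics of the cone and the linear functional. Two cautions, however. First, your step ``in general it should follow from the structure theory of affine homogeneous varieties, if necessary by inducting along a factorization of $\mathbf{H}$ into its unipotent radical and a reductive Levi subgroup'' is exactly where the real work lies, and it is not as routine as you make it sound; in \cite{BO12} the relevant decomposition and Jacobian control are established with substantial input from the theory of symmetric and spherical varieties, not by an ad hoc induction. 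Second, note that in the actual application in this paper the acting group $w_i\mathbf{F}_{w_i}w_i^{-1}$ is \emph{unipotent}, so the orbit map $\Psi_{w_i}$ is a polynomial isomorphism onto a closed subvariety and the volume computation can in principle be done much more directly (e.g.\ via resolution of the polynomial map or Newton-polytope techniques), bypassing the reductive Cartan machinery entirely. Your sketch is aimed at the general reductive case and would be overkill here, though invoking the general theorem is of course what the paper does.
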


We apply Theorem~\ref{thm67} to the morphism $\Psi_{w}$ where the closed orbit $\mathcal O=\Psi_{w}(\mathbf F_{w}(\mathbb R))$ and the measuer $\mu$ is the push-forward $\Psi_{w}^*(\mu_{\mathbf F_{w}})$ of the Haar measure on $\mathbf F_{w}(\mathbb R)$, and obtain the following
\begin{corollary}\label{c68}
There exist constants $a_{w}\in\mathbb Q_{\geq0}$, $b_{w}\in\mathbb Z_{\geq0}$ and $c_{w}>0$ such that $$\Vol(E_{w}(R))=\mu_{\mathbf F_{w}}(\Psi_{w}^{-1}(B_R))\sim c_{w}R^{a_{w}}(\log R)^{b_{w}}\;(\textup{as }R\to\infty).$$ 
\end{corollary}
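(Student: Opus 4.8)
The plan is to deduce this directly from Theorem~\ref{thm67} by feeding in the morphism $\Psi_{w_i}$ of Definition~\ref{def62}. Put $\mathbf H:=w_i\mathbf F_{w_i}w_i^{-1}$, a unipotent $\mathbb Q$-subgroup contained in $R_u(\overline{\mathbf P}_{\beta_0})$, and let $W$ denote the real vector space $\bigwedge^{\dim V_{\beta_0}}(\mathbb R^d)\subset\bigwedge^{\dim V_{\beta_0}}V$, equipped with the Euclidean norm induced by $\|\cdot\|$. Since $\rho$ is defined over $\mathbb Q$ and the weight space $V_{\beta_0}$ is rational (so that $e_{V_{\beta_0}}=e_1\wedge\cdots\wedge e_{\dim V_{\beta_0}}$ lies in $\bigwedge^{\dim V_{\beta_0}}(\mathbb Z^d)\subset W$), the induced representation $\rho_{\beta_0}$ restricts to an $\mathbb R$-algebraic representation of $\mathbf H$ on $W$, and the relevant orbit is $\mathcal O:=\mathbf H(\mathbb R)\cdot e_{V_{\beta_0}}=\Psi_{w_i}(\mathbf F_{w_i}(\mathbb R))$.

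First I would record the three properties needed to invoke Theorem~\ref{thm67}. (i)~The map $\Psi_{w_i}$ identifies $\mathbf F_{w_i}(\mathbb R)$ with $\mathcal O$ --- this is already observed in Definition~\ref{def62}, and it forces the stabilizer of $e_{V_{\beta_0}}$ in $\mathbf H$ to be trivial, so $\mathbf H(\mathbb R)$ acts simply transitively on $\mathcal O$. (ii)~The orbit $\mathcal O$ is closed in $W$: because $\mathbf H$ is unipotent and acts algebraically on the affine space $W$, the Kostant--Rosenlicht theorem gives that $\mathbf H\cdot e_{V_{\beta_0}}$ is Zariski closed, and since unipotent groups over $\mathbb R$ have vanishing Galois cohomology (and here the stabilizer is trivial anyway), $\mathcal O$ coincides with the set of real points of this closed orbit, hence is closed in the Euclidean topology; equivalently, $\Psi_{w_i}$ is a closed immersion. (iii)~The pushforward $\mu:=(\Psi_{w_i})_*\mu_{\mathbf F_{w_i}}$ is an $\mathbf H(\mathbb R)$-invariant Radon measure on $\mathcal O$: conjugation by $w_i$ together with $\Psi_{w_i}$ intertwines the $\mathbf H(\mathbb R)$-action on $\mathcal O$ with left translation on $\mathbf F_{w_i}(\mathbb R)$, which preserves $\mu_{\mathbf F_{w_i}}$, and unipotent groups are unimodular so there is no left/right ambiguity.

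Granting (i)--(iii), Theorem~\ref{thm67} applied to $(\mathbf H,\mathcal O,\mu,\|\cdot\|)$ produces constants $a_{w_i}\in\mathbb Q_{\geq0}$, $b_{w_i}\in\mathbb Z_{\geq0}$, $c_{w_i}>0$ with $\mu(B_R)\sim c_{w_i}R^{a_{w_i}}(\log R)^{b_{w_i}}$ as $R\to\infty$. Since $\mu$ is supported on $\mathcal O$ and, by Definition~\ref{def62}, $E_{w_i}(R)=\Psi_{w_i}^{-1}(B_R)$, the change-of-variables identity for pushforward measures gives $\Vol(E_{w_i}(R))=\mu_{\mathbf F_{w_i}}(\Psi_{w_i}^{-1}(B_R))=\mu(B_R)$, which is the claimed asymptotic. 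The only mildly delicate point is (ii), the closedness of $\mathcal O$ --- equivalently that $\Psi_{w_i}$ is a proper map --- which is precisely where unipotence of $\mathbf F_{w_i}$ enters; everything else is bookkeeping to match the hypotheses of Theorem~\ref{thm67}.
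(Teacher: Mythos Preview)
Your proposal is correct and follows exactly the approach of the paper, which simply says to apply Theorem~\ref{thm67} with $\mathcal O=\Psi_{w_i}(\mathbf F_{w_i}(\mathbb R))$ and $\mu=(\Psi_{w_i})_*\mu_{\mathbf F_{w_i}}$. You have added the verification of the hypotheses (closedness of the orbit via Kostant--Rosenlicht, invariance of the pushforward measure) that the paper leaves implicit, which is a welcome elaboration rather than a different route.
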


To compute the Hausdorff dimension of $S_\rho(\psi)^c\cap\mathcal E_{w,\epsilon}(\psi)$, we also need an upper bound for the asymptotic estimate of the number of rational elements in $$(w\mathbf F_{w}w^{-1})(\mathbb Q)\subset R_u(\overline{\mathbf P}_{\beta_0})(\mathbb Q).$$ Consider the morphism $$\Psi_{w}: \mathbf F_{w}(\mathbb R)\to\bigwedge^{\dim V_{\beta_0}} V,\quad\Psi_{w}(x)=\rho_{\beta_0}(wxw^{-1})\cdot e_{V_{\beta_0}}.$$ Let $\Omega_{\beta_0}$ be an open bounded subset in $R_u(\overline{\mathbf P}_{\beta_0})(\mathbb R)$ which contains a fundamental domain of $R_u(\overline{\mathbf P}_{\beta_0})(\mathbb R)/(R_u(\overline{\mathbf P}_{\beta_0})(\mathbb R)\cap\Gamma)$. Let $g\in w\mathbf F_{w}w^{-1}(\mathbb Q)\subset R_u(\overline{\mathbf P}_{\beta_0})(\mathbb Q)$ be a rational element in $\Omega_{\beta_0}$. By definition, $\height(g)$ is the co-volume of $\rho(g)\cdot\mathbb Z^d\cap V_{\beta_0}$ in $V_{\beta_0}(\mathbb R)$. In the $\dim V_{\beta_0}$-exterior product space of $V$, this implies that there exists a constant $C_{\beta_0}>0$ depending only on $\Omega_{\beta_0}$, such that the length of the primitive integral vector in the line spanned by $\rho(g^{-1})\cdot e_{V_{\beta_0}}$ is less than $C_{\beta_0}\cdot\height(g)$. So the number of rational elements $g$ in $\Omega_{\beta_0}\cap(w\mathbf F_{w}w^{-1})(\mathbb Q)$ whose heights are less than $l>0$ is bounded above by the total number of primitive integral points in $\bigwedge_{i=1}^{\dim V_{\beta_0}}V$ whose lengths are less than $C_{\beta_0}\cdot l$. This leads to the results about Manin's conjecture \cite{BM90}, i.e. the asymptotic estimate of the number of rational points in an algebraic variety  (See e.g. \cite{CT02,FMT89,GMO08,GO11,MG14,P95,STT07,ST99,ST16} and the references therein.) For the general situation, we define 
\begin{align*}
A_w=A_w(\Omega_{\beta_0}):=\limsup_{l\to\infty}\frac{\log|\{g\in\Omega_{\beta_0}\cap w\mathbf F_{w}w^{-1}(\mathbb Q): \height(g)\leq l\}|}{\log l}
\end{align*}
and from the discussion above, we know that $A_w$ is a finite number which is bounded above by the growth rate of the number of the primitive integral points in $\bigwedge^{\dim V_{\beta_0}}V$. We also have that for any $\epsilon>0$ $$|\{g\in\Omega_{\beta_0}\cap w\mathbf F_{w}w^{-1}(\mathbb Q): \height(g)\leq l\}|\lesssim_{\epsilon,\Omega_{\beta_0}} l^{A_w+\epsilon}$$ where the implicit constant depends only on $\epsilon,\Omega_{\beta_0},\mathbf G$ and $\rho$. In the following, we will fix the constants $a_{w},b_{w},c_{w}$ in Corollary~\ref{c68} and $A_{w}$ for the bounded open subset $\Omega_{\beta_0}$ in $R_u(\overline{\mathbf P}_{\beta_0})(\mathbb R)$.

Now we compute an upper bound for the Hausdorff dimension of the subset $S_\rho(\psi)^c\cap\mathcal E_{w,\epsilon}(\psi)$ $(\tau\leq\beta_0(wa_{-1}w^{-1}))$. In the computation, we project the subset $S_\rho(\psi)^c\cap\mathcal E_{w,\epsilon}(\psi)$ into the quotient space $R_u(\overline{\mathbf P}_0)(\mathbb R)/\Gamma\cap R_u(\overline{\mathbf P}_0)(\mathbb R)$ by the natural projection map $$\pi_{R_u(\overline{\mathbf P}_0)}:R_u(\overline{\mathbf P}_0)(\mathbb R)\to R_u(\overline{\mathbf P}_0)(\mathbb R)/(\Gamma\cap R_u(\overline{\mathbf P}_0)(\mathbb R))$$ and compute the Hausdorff dimension of $\pi_{R_u(\overline{\mathbf P}_0)}(S_\rho(\psi)^c\cap\mathcal E_{w,\epsilon}(\psi))$. Since $\pi_{R_u(\overline{\mathbf P}_0)}$ is a local isometry, by the countable stability of Hausdorff dimension, we then obtain upper bounds for $\dim_H(\pi_{R_u(\overline{\mathbf P}_0)}^{-1}(\pi_{R_u(\overline{\mathbf P}_0)}(S_\rho(\psi)^c\cap\mathcal E_{w,\epsilon}(\psi))))$ and $\dim_H(S_\rho(\psi)^c\cap\mathcal E_{w,\epsilon}(\psi))$. 

Let $\tilde\Omega$ be a bounded fundamental domain of $R_u(\overline{\mathbf P}_0)(\mathbb R)/(\Gamma\cap R_u(\overline{\mathbf P}_0)(\mathbb R))$ in $R_u(\overline{\mathbf P}_0)(\mathbb R)$. We choose a bounded subset $\Omega_{\mathbf H_{w}}$ in $\mathbf H_{w}(\mathbb R)$ and a bounded subset $\Omega_{\mathbf F_{w}}$ in $\mathbf F_{w}(\mathbb R)$ such that $$\tilde\Omega\subset\Omega_{\mathbf H_{w}}\cdot\Omega_{\mathbf F_{w}}.$$ Without loss of generality, we may assume that $$\tilde\Omega\subset\Omega_{\mathbf H_{w}}\cdot\Omega_{\mathbf F_{w}}\subset\Omega_{\beta_0}.$$ Now suppose that $q\in\mathbf G(\mathbb R)$ is a rational element with $w^{-1}\cdot q\in R_u(\overline{\mathbf P}_0)(\mathbb R)$. Let $\tilde q\in\mathbf G(\mathbb R)$ such that $w^{-1}\tilde q$ is a representative of $w^{-1}q$ in $\tilde\Omega$ with $$w^{-1}\tilde q=w^{-1}q\gamma$$ for some $\gamma\in \Gamma\cap R_u(\overline{\mathbf P}_0)(\mathbb R)$. Then one can check that $\tilde q$ is still a rational element in $\mathbf G(\mathbb R)$ and $\height(\tilde q)=\height(q)$ since $\Gamma$ preserves the lattice $\mathbb Z^d$. So by Corollary~\ref{c23}, we have $$w^{-1}\tilde q\in\mathbf H_{w}(\mathbb R)\cdot\mathbf F_{w}(\mathbb Q).$$ Since $w^{-1}\tilde q$ is an element in the bounded fundamental domain $\tilde\Omega$, it implies that $$w^{-1}\tilde q\in\Omega_{\mathbf H_{w}}\cdot(\mathbf F_{w}(\mathbb Q)\cap\Omega_{\mathbf F_{w}}).$$ We then obtain the following 

\begin{proposition}\label{p610}
 The set $\pi_{R_u(\overline{\mathbf P}_0)}(S_\rho(\psi)^c\cap\mathcal E_{w,\epsilon}(\psi))$ consists of points $g(\Gamma\cap R_u(\overline{\mathbf P}_0)(\mathbb R))$ for which there exist a divergent sequence $\{t_k\}\subset\mathbb N$ and a sequence of rational elements $\{q_k\}\subset\mathbf G(\mathbb R)$ such that $$g(\Gamma\cap R_u(\overline{\mathbf P}_0)(\mathbb R))\in\left(a_{-t_k}E_{w}(e^{\lambda_w(\epsilon)\cdot t_k}/\height(q_k))a_{t_k}\right)\cdot w^{-1}q_k(\Gamma\cap R_u(\overline{\mathbf P}_0)(\mathbb R))$$ $$\height(q_k)\leq e^{\lambda_w(\epsilon)\cdot t_k}$$ and $w^{-1}q_k\in\tilde\Omega$.
\end{proposition}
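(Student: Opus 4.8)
The plan is to read the statement off Definition~\ref{def63} together with the normalization set up in the paragraph immediately preceding the proposition. Recall that $\Gamma\cap R_u(\overline{\mathbf P}_0)(\mathbb R)$ is a cocompact lattice in the unipotent group $R_u(\overline{\mathbf P}_0)(\mathbb R)$, so $\pi_{R_u(\overline{\mathbf P}_0)}$ is a covering map admitting the bounded fundamental domain $\tilde\Omega$ fixed above. Take $g\in S_\tau^c\cap\mathcal E_i(\tau)$; after setting $\epsilon_0=1$, Definition~\ref{def63} supplies a divergent sequence $\{t_k\}\subset\mathbb N$ and rational elements $\{q_k\}\subset\mathbf G(\mathbb R)$ with $w_i^{-1}q_k\in R_u(\overline{\mathbf P}_0)(\mathbb R)$, with $d(q_k)\leq C_0(e^{-\tau t_k}e^{\beta_0(w_ia_{-t_k}w_i^{-1})})^{\dim V_{\beta_0}}$, and with $g$ lying in $\big(a_{-t_k}E_{w_i}\big(C_0(e^{-\tau t_k}e^{\beta_0(w_ia_{-t_k}w_i^{-1})})^{\dim V_{\beta_0}}/d(q_k)\big)a_{t_k}\big)w_i^{-1}q_k$. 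The whole task is to replace each $q_k$ by a rational element whose $w_i^{-1}$-translate lies in $\tilde\Omega$ and to check that this replacement becomes invisible after projecting to the quotient.

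For the replacement I would argue exactly as in the discussion before the proposition: using $R_u(\overline{\mathbf P}_0)=\mathbf H_{w_i}\cdot\mathbf F_{w_i}$ and Corollary~\ref{c23}, choose $\gamma_k\in\Gamma\cap R_u(\overline{\mathbf P}_0)(\mathbb R)$ with $w_i^{-1}q_k\gamma_k\in\tilde\Omega$ and set $\tilde q_k:=q_k\gamma_k$. Since $\rho(\Gamma)$ preserves $\mathbb Z^d$, one has $\rho(\tilde q_k)\mathbb Z^d=\rho(q_k)\mathbb Z^d$, so $\tilde q_k$ is again rational, it still satisfies $w_i^{-1}\tilde q_k\in R_u(\overline{\mathbf P}_0)(\mathbb R)$ (in fact $w_i^{-1}\tilde q_k\in\tilde\Omega$), and $d(\tilde q_k)=d(q_k)$. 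Consequently the denominator bound survives verbatim with $q_k$ replaced by $\tilde q_k$, and the radius $C_0(e^{-\tau t_k}e^{\beta_0(w_ia_{-t_k}w_i^{-1})})^{\dim V_{\beta_0}}/d(q_k)$ of the ball defining $E_{w_i}(\,\cdot\,)$ is unchanged, its numerator not involving $q_k$ at all.

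To finish I would push the membership of $g$ down to the quotient. From $g\in\big(a_{-t_k}E_{w_i}(\cdots)a_{t_k}\big)w_i^{-1}q_k$ and $q_k=\tilde q_k\gamma_k^{-1}$ with $\gamma_k\in\Gamma\cap R_u(\overline{\mathbf P}_0)(\mathbb R)$ we get $g\gamma_k\in\big(a_{-t_k}E_{w_i}(\cdots)a_{t_k}\big)w_i^{-1}\tilde q_k$; since $\mathbf F_{w_i}$ is normalized by $\mathbf T$, the set $a_{-t_k}E_{w_i}(\cdots)a_{t_k}$ lies inside $\mathbf F_{w_i}(\mathbb R)\subset R_u(\overline{\mathbf P}_0)(\mathbb R)$, so the whole right-hand side lies in $R_u(\overline{\mathbf P}_0)(\mathbb R)$, and applying $\pi_{R_u(\overline{\mathbf P}_0)}$, which absorbs the right translate by $\gamma_k$, gives precisely the asserted membership of $\pi_{R_u(\overline{\mathbf P}_0)}(g)$, now with $\tilde q_k$ in the role of $q_k$ and with $w_i^{-1}\tilde q_k\in\tilde\Omega$. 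This yields the inclusion $\pi_{R_u(\overline{\mathbf P}_0)}(S_\tau^c\cap\mathcal E_i(\tau))\subseteq$ the set of cosets with the stated property, which is exactly what the subsequent covering estimate for $\dim_H$ uses; the reverse inclusion, when wanted, is obtained by reversing the same steps (any representative of such a coset, after a right translation by a suitable lattice element, satisfies Definition~\ref{def63}).

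I do not expect a genuine obstacle here: the proposition merely consolidates the preceding discussion into a form in which the auxiliary rational points $q_k$ range over the fixed bounded region $\tilde\Omega$ of $R_u(\overline{\mathbf P}_0)(\mathbb R)$, which is what one needs in order to build the open covers of $S_\tau^c$ in the rest of the section. The only point that has to be handled with (very little) care is the right-$\Gamma$-invariance of the denominator $d(\cdot)$, so that both the bound on $d(q_k)$ and the radius of $E_{w_i}(\,\cdot\,)$ are left unaffected by the normalization, together with the observation that the product $\big(a_{-t_k}E_{w_i}(\cdots)a_{t_k}\big)w_i^{-1}q_k$ stays inside $R_u(\overline{\mathbf P}_0)(\mathbb R)$, so that the descent to the quotient is meaningful.
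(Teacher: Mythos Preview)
Your proposal is correct and follows essentially the same route as the paper: the proposition is stated there without a separate proof, being just a consolidation of the paragraph preceding it, and you have spelled out precisely that argument---replace each $q_k$ by $\tilde q_k=q_k\gamma_k$ with $\gamma_k\in\Gamma\cap R_u(\overline{\mathbf P}_0)(\mathbb R)$ so that $w_i^{-1}\tilde q_k\in\tilde\Omega$, use that $\rho(\Gamma)$ preserves $\mathbb Z^d$ to keep rationality and $d(\tilde q_k)=d(q_k)$, and descend to the quotient by absorbing the right $\gamma_k$-translate.
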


\begin{lemma}\label{l611}
Let $\delta>0$ and $B_{R_u(\overline{\mathbf P}_0)}(\delta)$ be the small open ball of radius $\delta$ centered at identity in $R_u(\overline{\mathbf P}_0)(\mathbb R)$. Then there exists a constant $C_{\delta}>0$ depending only on $\delta$ and $\rho$ such that for any $R>0$
\begin{align*}
B_{R_u(\overline{\mathbf P}_0)}(\delta)\cdot E_{w}(R)\subset E_{w}(C_\delta\cdot R)\cdot\mathbf H_{w}(\mathbb R).
\end{align*}
\end{lemma}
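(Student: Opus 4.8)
The plan is to push an arbitrary element of $B_{R_u(\overline{\mathbf P}_0)}(\delta)\cdot E_{w_i}(R)$ through the morphism $\Psi_{w_i}$ of Definition~\ref{def62} and read off the decomposition from the structure of $\rho$. First I would fix $b\in B_{R_u(\overline{\mathbf P}_0)}(\delta)$ and $f\in E_{w_i}(R)\subset\mathbf F_{w_i}(\mathbb R)$, so that $b\cdot f\in R_u(\overline{\mathbf P}_0)(\mathbb R)$. Since $R_u(\overline{\mathbf P}_0)=\mathbf H_{w_i}\cdot\mathbf F_{w_i}$ as a direct product of varieties, taking inverses gives $R_u(\overline{\mathbf P}_0)=\mathbf F_{w_i}\cdot\mathbf H_{w_i}$ as well, so I may write $b\cdot f=f'\cdot h'$ with $f'\in\mathbf F_{w_i}(\mathbb R)$ and $h'\in\mathbf H_{w_i}(\mathbb R)$. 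It then suffices to produce a constant $C_\delta>0$, depending only on $\delta$ (the group $\mathbf F_{w_i}$, $w_i$ and $\rho$ being fixed), with $f'\in E_{w_i}(C_\delta\cdot R)$.

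Next I would compute $\Psi_{w_i}(f')=\rho_{\beta_0}(w_if'w_i^{-1})\cdot e_{V_{\beta_0}}$. The key point, recorded in \S\ref{pre}, is that $w_i\mathbf H_{w_i}w_i^{-1}\subset\mathbf P_{\beta_0}$ fixes \emph{every} vector of $V_{\beta_0}$, hence $\rho_{\beta_0}(w_ih'w_i^{-1})$ fixes the wedge $e_{V_{\beta_0}}=e_1\wedge\cdots\wedge e_{\dim V_{\beta_0}}$. Therefore
\begin{align*}
\Psi_{w_i}(f')=\rho_{\beta_0}(w_if'h'w_i^{-1})\cdot e_{V_{\beta_0}}=\rho_{\beta_0}(w_ibfw_i^{-1})\cdot e_{V_{\beta_0}}=\rho_{\beta_0}(w_ibw_i^{-1})\cdot\Psi_{w_i}(f).
\end{align*}
Because $w_i$ is fixed and $b$ ranges over the relatively compact set $B_{R_u(\overline{\mathbf P}_0)}(\delta)$, the conjugated set $w_iB_{R_u(\overline{\mathbf P}_0)}(\delta)w_i^{-1}$ is relatively compact in $\mathbf G(\mathbb R)$, so $\{\rho_{\beta_0}(w_ibw_i^{-1}):b\in B_{R_u(\overline{\mathbf P}_0)}(\delta)\}$ is a bounded family of linear operators on $\bigwedge^{\dim V_{\beta_0}}V$; let $C_\delta>0$ bound their operator norms. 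Then $\|\Psi_{w_i}(f')\|\le C_\delta\|\Psi_{w_i}(f)\|\le C_\delta\cdot R$, i.e. $f'\in E_{w_i}(C_\delta\cdot R)$, and $b\cdot f=f'\cdot h'\in E_{w_i}(C_\delta\cdot R)\cdot\mathbf H_{w_i}(\mathbb R)$, which is the assertion.

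The only step that genuinely needs care is the middle equality in the display, namely the cancellation of the $\mathbf H_{w_i}$-factor: this uses that $w_i\mathbf H_{w_i}w_i^{-1}$ acts trivially on all of $V_{\beta_0}$ rather than merely stabilizing it — precisely the reason $\mathbf H_{w_i}$ was defined through $\mathbf P_{\beta_0}$ and not through $R_u(\mathbf P_{\beta_0})$ — together with the elementary remark that a linear automorphism fixing $V_{\beta_0}$ pointwise fixes $e_{V_{\beta_0}}$ in the exterior power. Everything else is the routine compactness bound for $\rho_{\beta_0}$ on a bounded conjugated neighbourhood of the identity, so I do not expect any serious obstacle.
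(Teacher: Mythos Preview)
Your argument is correct and is essentially the paper's own proof: decompose $b\cdot f$ as $f'\cdot h'$ in $\mathbf F_{w_i}\cdot\mathbf H_{w_i}$, use that $w_i\mathbf H_{w_i}w_i^{-1}$ fixes $e_{V_{\beta_0}}$ to replace $\Psi_{w_i}(f')$ by $\rho_{\beta_0}(w_ibw_i^{-1})\cdot\Psi_{w_i}(f)$, and bound by compactness. The paper writes this in three lines with the same steps; your version just spells out the justification for the pointwise-fixing step a bit more explicitly.
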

\begin{proof}
Let $x\in B_{R_u(\overline{\mathbf P}_0)}(\delta)$ and $y\in E_{w}(R)$. Then $x\cdot y\in R_u(\overline{\mathbf P}_0)(\mathbb R)$ and we write $$x\cdot y=u\cdot v$$ where $u\in\mathbf F_{w}(\mathbb R)$ and $v\in\mathbf H_{w}(\mathbb R)$. Then we have 
\begin{align*}
\|\Psi_{w}(u)\|=&\|\rho_{\beta_0}(wuw^{-1})\cdot e_{V_{\beta_0}}\|=\|\rho_{\beta_0}(wuvw^{-1})\cdot e_{V_{\beta_0}}\|\\
=&\|\rho_{\beta_0}(wxyw^{-1})\cdot e_{V_{\beta_0}}\|\leq C_\delta\cdot R
\end{align*}
for some constant $C_\delta>0$ depending only on $\delta>0$ and $\rho$. This completes the proof of the lemma.
\end{proof}

Note that by definition, for any rational element $g\in\mathbf G(\mathbb R)$, every element $x$ in the $\Gamma$-coset $g\Gamma$ of $g$ is also rational, and $\height(x)=\height(g)$. We will use this fact in the following lemma.

\begin{lemma}\label{l612}
Let $w\in{_{\mathbb Q}\overline{\mathcal W}}$ and $L>0$. Then the set of elements $g(\Gamma\cap R_u(\overline{\mathbf P}_0)(\mathbb R))$ in $R_u(\overline{\mathbf P}_0)(\mathbb R)/(\Gamma\cap R_u(\overline{\mathbf P}_0)(\mathbb R))$ with $w\cdot g$ rational and $\height(w\cdot g)=L$ is equal to $$\{(x\cdot y)(\Gamma\cap R_u(\overline{\mathbf P}_0)(\mathbb R)): x\in\Omega_{\mathbf H_{w}}, y\in\Omega_{\mathbf F_{w}}\cap\mathbf F_{w}(\mathbb Q), \height(w\cdot y)= L\}.$$
\end{lemma}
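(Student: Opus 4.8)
\emph{Proof proposal.} The plan is to prove the claimed set equality by establishing the two inclusions, using only Corollary~\ref{c23}, the uniqueness of the product decomposition $R_u(\overline{\mathbf P}_0)=\mathbf H_{w_i}\cdot\mathbf F_{w_i}$, and the fact (recorded in \S\ref{pre}) that $w_i\mathbf H_{w_i}w_i^{-1}$ fixes every vector of $V_{\beta_0}$. The first step I would carry out is a preliminary observation that will be used in both inclusions: if $g\in R_u(\overline{\mathbf P}_0)(\mathbb R)$ and $w_i\cdot g$ is rational, write $g=x\cdot y$ with $x\in\mathbf H_{w_i}(\mathbb R)$ and $y\in\mathbf F_{w_i}(\mathbb Q)$ (possible by Corollary~\ref{c23}); then $d(w_i\cdot g)=d(w_i\cdot y)$. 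Indeed $w_ixy=(w_ixw_i^{-1})\cdot(w_iy)$ with $w_ixw_i^{-1}\in w_i\mathbf H_{w_i}w_i^{-1}\subset\mathbf P_{\beta_0}$ invertible and stabilizing $V_{\beta_0}$, so $\rho(w_ixy)\mathbb Z^d\cap V_{\beta_0}=\rho(w_ixw_i^{-1})\bigl(\rho(w_iy)\mathbb Z^d\cap V_{\beta_0}\bigr)$, and since $w_ixw_i^{-1}$ fixes $V_{\beta_0}$ pointwise this intersection is exactly $\rho(w_iy)\mathbb Z^d\cap V_{\beta_0}$; comparing co-volumes (and noting $w_i\cdot y$ is rational, again by Corollary~\ref{c23}) gives $d(w_i\cdot g)=d(w_i\cdot y)$.

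For the inclusion $\supseteq$, I would take $g=x\cdot y$ with $x\in\Omega_{\mathbf H_{w_i}}$, $y\in\Omega_{\mathbf F_{w_i}}\cap\mathbf F_{w_i}(\mathbb Q)$ and $d(w_i\cdot y)=L$. Then $g\in\mathbf H_{w_i}(\mathbb R)\cdot\mathbf F_{w_i}(\mathbb Q)$ and $g\in R_u(\overline{\mathbf P}_0)(\mathbb R)$, so $w_i\cdot g$ is rational by Corollary~\ref{c23}, and by the preliminary observation $d(w_i\cdot g)=d(w_i\cdot y)=L$; hence the coset $g(\Gamma\cap R_u(\overline{\mathbf P}_0)(\mathbb R))$ lies in the left-hand set.

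For the inclusion $\subseteq$, I would start from $g\in R_u(\overline{\mathbf P}_0)(\mathbb R)$ with $w_i\cdot g$ rational and $d(w_i\cdot g)=L$, and pass to the representative $\tilde g$ of $g(\Gamma\cap R_u(\overline{\mathbf P}_0)(\mathbb R))$ in the bounded fundamental domain $\tilde\Omega$, so $\tilde g=g\gamma$ with $\gamma\in\Gamma\cap R_u(\overline{\mathbf P}_0)(\mathbb R)$. Since $\rho(\gamma)$ preserves $\mathbb Z^d$, we get $\rho(w_i\tilde g)\mathbb Z^d=\rho(w_i g)\mathbb Z^d$, so $w_i\cdot\tilde g$ is still rational with $d(w_i\cdot\tilde g)=L$. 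Now $\tilde g\in\tilde\Omega\subset\Omega_{\mathbf H_{w_i}}\cdot\Omega_{\mathbf F_{w_i}}$ while, by Corollary~\ref{c23}, $\tilde g\in\mathbf H_{w_i}(\mathbb R)\cdot\mathbf F_{w_i}(\mathbb Q)$; by the injectivity of the product map $\mathbf H_{w_i}\times\mathbf F_{w_i}\to R_u(\overline{\mathbf P}_0)$ the decomposition is unique, so $\tilde g=x\cdot y$ with $x\in\Omega_{\mathbf H_{w_i}}$ and $y\in\Omega_{\mathbf F_{w_i}}\cap\mathbf F_{w_i}(\mathbb Q)$, and the preliminary observation gives $d(w_i\cdot y)=d(w_i\cdot\tilde g)=L$. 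Since $g(\Gamma\cap R_u(\overline{\mathbf P}_0)(\mathbb R))=(x\cdot y)(\Gamma\cap R_u(\overline{\mathbf P}_0)(\mathbb R))$, this coset lies in the right-hand set, which finishes the proof.

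I expect no serious obstacle here; the one point requiring care is the preliminary observation, where it matters that $w_i\mathbf H_{w_i}w_i^{-1}$ fixes $V_{\beta_0}$ \emph{pointwise} (not merely stabilizes it) and that $\rho(\Gamma)$ preserves $\mathbb Z^d$, so that $d(w_i\cdot(-))$ descends to a well-defined function depending only on the $\mathbf F_{w_i}$-component and constant on $(\Gamma\cap R_u(\overline{\mathbf P}_0)(\mathbb R))$-cosets. The rest is bookkeeping with the Bruhat-type decomposition already set up in \S\ref{pre} and \S\ref{ubd}.
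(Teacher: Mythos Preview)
Your proposal is correct and follows essentially the same approach as the paper's proof: both reduce to a representative in the fundamental domain $\tilde\Omega\subset\Omega_{\mathbf H_{w_i}}\cdot\Omega_{\mathbf F_{w_i}}$, use that $w_i\mathbf H_{w_i}w_i^{-1}$ fixes $V_{\beta_0}$ pointwise and that $\rho(\Gamma)$ preserves $\mathbb Z^d$ to pass the rationality and denominator condition from $w_i\cdot g$ to $w_i\cdot y$, and then invoke Corollary~\ref{c23} together with the uniqueness of the $\mathbf H_{w_i}\cdot\mathbf F_{w_i}$ decomposition to force $y\in\mathbf F_{w_i}(\mathbb Q)$. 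Your write-up is in fact more complete than the paper's, which only spells out the inclusion $\subseteq$ and leaves $\supseteq$ implicit.
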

\begin{proof}
Let $g(\Gamma\cap R_u(\overline{\mathbf P}_0)(\mathbb R))\in R_u(\overline{\mathbf P}_0)/(\Gamma\cap R_u(\overline{\mathbf P}_0))$ with $w\cdot g$ rational and $\height(w\cdot g)= L$. We write $$g=(u\cdot v)\cdot\gamma$$ for some $u\in\Omega_{\mathbf H_{w}}$, $v\in\Omega_{\mathbf F_{w}}$ and $\gamma\in\Gamma\cap R_u(\overline{\mathbf P}_0)$. Then we have $$w\cdot g=(w\cdot u\cdot w^{-1})\cdot w\cdot v\cdot\gamma$$ and $w\cdot v$ is rational with $$\height(w\cdot g)=\height(w\cdot v)= L.$$ By Corollary~\ref{c23}, we know that $v\in\mathbf F_{w}(\mathbb Q)$. This completes the proof of the lemma.
\end{proof}
\begin{definition}\label{d613}
For any $L>0$,  we define $$\mathcal S(L):=\{(x\cdot y)(\Gamma\cap R_u(\overline{\mathbf P}_0)(\mathbb R)): x\in\Omega_{\mathbf H_{w}}, y\in\Omega_{\mathbf F_{w}}\cap\mathbf F_{w}(\mathbb Q), \height(w\cdot y)= L\}$$ $$\mathcal F(L):=\{y\in\Omega_{\mathbf F_{w}}\cap\mathbf F_{w}(\mathbb Q): \height(w\cdot y)= L\}.$$
\end{definition}
Now we construct open covers of $\pi_{R_u(\overline{\mathbf P}_0)}(S_\tau^c\cap\mathcal E_{w,\epsilon}(\psi))$ with arbitrarily small diameters. Here we use the right-invariant metric $d_{R_u(\overline{\mathbf P}_0)(\mathbb R)}$ on $R_u(\overline{\mathbf P}_0)(\mathbb R)$ which induces a metric on the quotient space $R_u(\overline{\mathbf P}_0)(\mathbb R)/(\Gamma\cap R_u(\overline{\mathbf P}_0)(\mathbb R))$. Note that by Corollary~\ref{c68}, for any $\epsilon>0$ and $R>0$, we have $$\Vol(E_{w}(R))\lesssim_\epsilon R^{a_{w}+\epsilon}$$ where the implicit constant depends only on $\epsilon$ and $w\in{_{\mathbb Q}\overline{\mathcal W}}$. Let $\nu_0$ be any $\mathbb Q$-root in $R_u(\overline{\mathbf P}_0)$ such that $$\nu_0(a_1)=\max\{\alpha(a_1):\alpha\in\Phi(R_u(\overline{\mathbf P}_{0}))\}.$$ Since the unstable horospherical subgroup of $\{a_t\}_{t\in\mathbb R}$ is contained in $R_u(\overline{\mathbf P}_0)$, we have $\nu_0(a_1)>0$.

Fix a small number $\delta>0$. Let $L>0$ and $t\in\mathbb N$ such that $$L\leq e^{\lambda_w(\epsilon)\cdot t}.$$ Let $q$ be a rational element in $\mathbf G(\mathbb R)$ such that $\height(q)=L$ and $w^{-1}q\in R_u(\overline{\mathbf P}_0)(\mathbb R).$ The following subset $$\left(a_{-t}\cdot E_{w}\left(e^{\lambda_w(\epsilon)\cdot t}/\height(q)\right)\cdot a_t\right)\cdot w^{-1}q\cdot(\Gamma\cap R_u(\overline{\mathbf P}_0)(\mathbb R))$$ can be covered by disjoint boxes of diameter at most $\delta\cdot\exp(\nu_0(a_{-t}))$, and by Lemma~\ref{l611} and Lemma~\ref{l612}, we know that these boxes are contained in
\begin{align*}
&B_{R_u(\overline{\mathbf P}_0)}(\delta\cdot\exp(\nu_0(a_{-t})))\left(a_{-t}\cdot E_{w}\left(e^{\lambda_w(\epsilon)\cdot t}/\height(q)\right)\cdot a_t\right)\cdot w^{-1}q\cdot(\Gamma\cap R_u(\overline{\mathbf P}_0)(\mathbb R))\\
\subset&\left(a_{-t}\cdot B_{R_u(\overline{\mathbf P}_0)}(\delta)\cdot E_{w}\left(e^{\lambda_w(\epsilon)\cdot t}/\height(q)\right)\cdot a_t\right)\cdot w^{-1}q\cdot(\Gamma\cap R_u(\overline{\mathbf P}_0)(\mathbb R))\\\subset&\left(a_{-t}\cdot E_{w}\left(C_\delta\cdot e^{\lambda_w(\epsilon)\cdot t}/L\right)\cdot a_t\right)\cdot\mathbf H_{w}(\mathbb R)\cdot\mathcal S(L).
\end{align*}
Since every point $g(\Gamma\cap R_u(\overline{\mathbf P}_0)(\mathbb R))$ in $\mathbf H_{w}(\mathbb R)\cdot\mathcal S(L)$ satisfies the property that $w\cdot g$ is rational and $\height(w\cdot g)=L$, it follows again from Lemma~\ref{l612} that $$\mathbf H_{w}(\mathbb R)\cdot\mathcal S(L)=\mathcal S(L).$$ We conclude that the subset $$\left(a_{-t}E_{w}(e^{\lambda_w(\epsilon)\cdot t}/\height(q))a_t\right)\cdot w^{-1}q\cdot(\Gamma\cap R_u(\overline{\mathbf P}_0)(\mathbb R))$$ can be covered by disjoint boxes with diameter at most $\delta\cdot\exp(\nu_0(a_{-t}))$ and these boxes are contained in $$a_{-t}E_{w}\left(C_\delta\cdot e^{\lambda_w(\epsilon)\cdot t}/L\right)a_t\cdot\mathcal S(L).$$ We collect all these boxes constructed above in a family which is denoted by $\mathcal P_{t,L}(q)$, and define $$\mathcal P_{t,L}=\bigcup_{\substack{\height(q)=L\\w^{-1}q\in R_u(\overline{\mathbf P}_0)(\mathbb R)}}\mathcal P_{t,L}(q).$$ 

Now since every box in $\mathcal P_{t,L}$ is contained in the same bounded set $$a_{-t}E_{w}(C_\delta\cdot e^{\lambda_w(\epsilon)\cdot t}/L)a_t\cdot\mathcal S(L)$$ we can choose a maximal finite sub-collection $\mathcal Q_{t,L}$ of disjoint boxes in $\mathcal P_{t,L}$. Then by the maximality of $\mathcal Q_{t,L}$ we have $$B_{R_u(\overline{\mathbf P}_0)}(\delta\cdot\exp(\nu_0(a_{-t})))^2\cdot\bigcup_{S\in\mathcal Q_{t,L}}S\supset\bigcup_{S\in\mathcal P_{t,L}}S.$$ Moreover, the number of boxes in $\mathcal Q_{t,L}$ is at most (up to a constant) $$\frac{\mu_{R_u(\overline{\mathbf P}_0)}((a_{-t}E_{w}(C_\delta\cdot e^{\lambda_w(\epsilon)\cdot t}/L)a_t)\cdot\mathcal S(L))}{\delta^{\dim R_u(\overline{\mathbf P}_0)}\cdot\exp(\dim R_u(\overline{\mathbf P}_0)\cdot\nu_0(a_{-t}))}.$$ Now we define $\mathcal G_t$ to be the collection of boxes in $$B_{R_u(\overline{\mathbf P}_0)}(\delta\cdot\exp(\nu_0(a_{-t})))^2\cdot\mathcal Q_{t,L}$$ where $$\theta_w\leq L\leq e^{\lambda_w(\epsilon)\cdot t}.$$ Note that $\diam\mathcal G_t\asymp e^{\nu_{0}(a_{-t})}$ and $\nu_0(a_{-1})\neq 0$. Define $$\mathcal G^k=\bigcup_{t\geq k}\mathcal G_t.$$ Then by definition and Proposition~\ref{p610}, the subset $\mathcal G^k$ is a cover of $\pi_{R_u(\overline{\mathbf P}_0)}(S_\tau^c\cap\mathcal E_{w,\epsilon}(\psi))$ for any $k\in\mathbb N$ and $\diam\mathcal G^k\asymp e^{\nu_{0}(a_{-k})}$. 

Now according to the construction of the open covers $\mathcal G^k$ $(k\in\mathbb N)$ above, we consider the following series with respect to the parameter $s$
\begin{align*}
&\sum_{B\in\mathcal G^k}\diam(B)^s\leq\sum_{t\in\mathbb N}\sum_{B\in\mathcal G_t}\diam(B)^s\\
\lesssim&\sum_{t\in\mathbb N}\frac{\mu_{R_u(\overline{\mathbf P}_0)}(a_{-t}E_{w}(C_\delta\cdot e^{\lambda_w(\epsilon)\cdot t}/L)a_t)\cdot\mathcal S(L))}{\exp(\dim R_u(\overline{\mathbf P}_0)\cdot\nu_0(a_{-t}))}\cdot e^{s\nu_0(a_{-t})}.
\end{align*}
By Corollary~\ref{c68}, for any $\epsilon>0$, we have
\begin{align*}
&\sum_{B\in\mathcal G^k}\diam(B)^s\\
\lesssim&\sum_{t\in\mathbb N}\sum_{\theta_w\leq L\leq e^{\lambda_w(\epsilon)\cdot t}}(e^{\lambda_w(\epsilon)\cdot t}/L)^{a_{w}+\epsilon}\cdot|\mathcal F(L)|\cdot e^{\sum_{\alpha\in\Phi(\mathbf F_{w})}\alpha(a_{-t})}e^{-\dim R_u(\overline{\mathbf P}_0)\cdot \nu_{0}(a_{-t})}\cdot e^{s\nu_{0}(a_{-t})}.
\end{align*} 
Fix $t\in\mathbb N$. Note that $\tau\leq\beta_0(wa_{-1}w^{-1})$. Choose $t_0\in\mathbb Z$ such that $e^{\lambda_w(\epsilon)\cdot t_0}\leq\theta_w.$ Then by the definition of $A_w$, we have
\begin{align*}
&\sum_{\theta_w\leq L\leq e^{\lambda_w(\epsilon)\cdot t}}|\mathcal F(L)|/L^{a_{w}+\epsilon}\\
\lesssim&\sum_{t_0\leq l\leq t-1}\sum_{e^{\lambda_w(\epsilon)\cdot l}\leq L\leq e^{\lambda_w(\epsilon)\cdot(l+1)}}|\mathcal F(L)|/L^{a_{w}+\epsilon}\\
\lesssim&\sum_{t_0\leq l\leq t-1}\frac1{(e^{\lambda_w(\epsilon)\cdot l})^{a_{w}+\epsilon}}\sum_{L\leq e^{\lambda_w(\epsilon)\cdot (l+1)}}|\mathcal F(L)|\\
\lesssim&\sum_{t_0\leq l\leq t-1}\frac1{(e^{\lambda_w(\epsilon)\cdot l})^{a_{w}+\epsilon}}(e^{\lambda_w(\epsilon)\cdot (l+1)})^{A_{w}+\epsilon}\\
\asymp&\sum_{t_0\leq l\leq t-1}(e^{\lambda_w(\epsilon)\cdot l})^{A_{w}-a_{w}}.
\end{align*}
Combining all the inequalities above, we obtain
\begin{align}\label{eqn3}
&\sum_{B\in\mathcal G^k}\diam(B)^s\\
\lesssim&\sum_{t\in\mathbb N}(e^{\lambda_w(\epsilon)\cdot t})^{a_{w}+\epsilon}\cdot e^{\sum_{\alpha\in\Phi(\mathbf F_{w})}\alpha(a_{-t})}e^{-\dim R_u(\overline{\mathbf P}_0)\cdot \nu_{0}(a_{-t})}\cdot e^{s\nu_{0}(a_{-t})}\cdot\sum_{t_0\leq l\leq t-1}(e^{\lambda_w(\epsilon)\cdot l})^{A_{w}-a_{w}}.\nonumber
\end{align}
Computing the series~\eqref{eqn3} in the cases $A_{w}<a_{w}$, $A_{w}=a_{w}$ and $A_{w}>a_{w}$, one can conclude that the series~\eqref{eqn3} converges if $$s>\dim R_u(\overline{\mathbf P}_0)-\sum_{\alpha\in\Phi(\mathbf F_{w})}\frac{\alpha(a_1)}{\nu_{0}(a_1)}+\frac{\lambda_w(\epsilon)}{\nu_{0}(a_1)}\cdot(\max\{A_{w},a_{w}\}+\epsilon).$$ This implies that for sufficiently small $\epsilon>0$ 
\begin{align*}
&\dim_H(S_\rho(\psi)^c\cap\mathcal E_{w,\epsilon}(\psi))\\
\leq&\dim R_u(\overline{\mathbf P}_0)-\sum_{\alpha\in\Phi(\mathbf F_{w})}\frac{\alpha(a_1)}{\nu_{0}(a_1)}+\frac{\lambda_w(\epsilon)}{\nu_{0}(a_1)}\cdot(\max\{A_{w},a_{w}\}+\epsilon).
\end{align*}
By taking $\epsilon\to0$ and Proposition~\ref{p66}, we conclude that
 \begin{align*}
 &\dim_H(S_\rho(\psi)^c\cap R_u(\overline{\mathbf P}_0)(\mathbb R))\\
 &\leq\max_{\substack{w\in{_{\mathbb Q}\overline{\mathcal W}}\\ \beta_0(wa_{-1}w^{-1})\\\geq\tau}}\left\{\dim R_u(\overline{\mathbf P}_0)-\sum_{\alpha\in\Phi(\mathbf F_{w})}\frac{\alpha(a_1)}{\nu_{0}(a_1)}+\frac{(\beta_0(wa_{-1}w^{-1})-\tau)}{\nu_{0}(a_1)}\cdot\max\{A_{w},a_{w}\}\cdot\dim V_{\beta_0}\right\}.
\end{align*}

\begin{proof}[Proof of Theorem~\ref{mthm12}]
By the Bruhat decomposition of $\mathbf G$, we have 
\begin{align*}
\mathbf G(\mathbb R)=\bigcup_{w\in{_{\mathbb Q}\mathcal W}}\mathbf P_{0}(\mathbb R)\cdot w\cdot \mathbf P_{0}(\mathbb R)=\bigcup_{w\in{_{\mathbb Q}\mathcal W}}\mathbf P_{0}(\mathbb R)\cdot w\cdot R_u(\mathbf P_{0})(\mathbb R).
\end{align*}
For any $w\in{_{\mathbb Q}\mathcal W}$, define $$\mathbf U_{w}=(w^{-1}\cdot R_u(\overline{\mathbf P}_0)\cdot w)\cap R_u(\mathbf P_0)\textup{ and }\mathbf Q_{w}=(w^{-1}\cdot\mathbf P_0\cdot w)\cap R_u(\mathbf P_0).$$ Then $R_u(\mathbf P_0)=\mathbf Q_{w}\cdot\mathbf U_{w}$ and we have
\begin{align*}
\mathbf G(\mathbb R)=&\bigcup_{w\in{_{\mathbb Q}\mathcal W}}\mathbf P_{0}(\mathbb R)\cdot w\cdot (\mathbf Q_{w}(\mathbb R)\cdot\mathbf U_{w}(\mathbb R))\\
=&\bigcup_{w\in{_{\mathbb Q}\mathcal W}}\mathbf P_{0}(\mathbb R)\cdot (w\cdot \mathbf U_{w}(\mathbb R)\cdot w^{-1})\cdot w\\
=&\mathbf P_{0}(\mathbb R)\cdot R_u(\overline{\mathbf P}_0)\cdot{_{\mathbb Q}\mathcal W}.
\end{align*}
By definition, if an element $g\in\mathbf G(\mathbb R)$ belongs to $S_\rho(\psi)$, then for any $h\in\mathbf P_0(\mathbb R)$ and any $w\in\mathbf G(\mathbb Q)$, $h\cdot g\cdot w$ also belongs to $S_\rho(\psi)$. Therefore, we have $$S_\rho(\psi)^c=\mathbf P_0(\mathbb R)\cdot(S_\rho(\psi)^c\cap R_u(\overline{\mathbf P}_0)(\mathbb R))\cdot{_{\mathbb Q}\mathcal W}$$ and
\begin{align*}
&\dim_HS_\rho(\psi)^c=\dim\mathbf P_0+\dim_H(S_\rho(\psi)^c\cap R_u(\overline{\mathbf P}_0)(\mathbb R))\\
\leq&\max_{\substack{w\in{_{\mathbb Q}\overline{\mathcal W}}\\ \beta_0(wa_{-1}w^{-1})\geq\tau}}\left\{\dim\mathbf G-\sum_{\alpha\in\Phi(\mathbf F_{w})}\frac{\alpha(a_1)}{\nu_{0}(a_1)}+\frac{(\beta_0(wa_{-1}w^{-1})-\tau)}{\nu_{0}(a_1)}\cdot\max\{A_{w},a_{w}\}\cdot\dim V_{\beta_0}\right\}.
\end{align*}
This completes the proof of Theorem~\ref{mthm12}.
\end{proof}

\section{Proofs of Theorems~\ref{thm10},~\ref{thm11} and \ref{thm12}}\label{cor}
In this section, we derive Theorems~\ref{thm10} from Theorems~\ref{mthm11} and~\ref{mthm12}, and then prove Theorems~\ref{thm11} and \ref{thm12}. Let $\mathbf G$ be a connected $\mathbb Q$-simple group. We assume that the highest weight $\beta_0$ in $\rho:\mathbf G\to\GL(V)$ is a multiple of $\sum_{\alpha\in\Phi(R_u(\overline{\mathbf P}_{\beta_0}))}{\alpha}$ and write $$\beta_0=\kappa\cdot\sum_{\alpha\in\Phi(R_u(\overline{\mathbf P}_{\beta_0}))}{\alpha}$$ for some constant $\kappa<0$,

For any $w\in{_{\mathbb Q}\overline{\mathcal W}}$, define $$\mathbf F^{w}=R_u(\mathbf P_0)\cap w^{-1}R_u(\overline{\mathbf P}_{\beta_0})w,\quad\mathbf H^{w}=R_u(\mathbf P_0)\cap w^{-1}\mathbf P_{\beta_0}w.$$ Since $R_u(\overline{\mathbf P}_0)$ and $R_u(\mathbf P_0)$ contains all the $\mathbb Q$-roots of $\mathbf G$ (with multiplicities), by \cite[Proposition 21.9]{B91}, we have $$\mathbf F^w\cdot \mathbf F_w=w^{-1}R_u(\overline{\mathbf P}_{\beta_0})w,\quad(w\mathbf F^w w^{-1})\cdot(w\mathbf F_ww^{-1})=R_u(\overline{\mathbf P}_{\beta_0}).$$ So by \cite[Proposition 5.26]{K86}, the Haar measure $\mu_{R_u(\overline{\mathbf P}_{\beta_0})}$ on $R_u(\overline{\mathbf P}_{\beta_0})(\mathbb R)$ can be written as a product of the Haar measure $\mu_{w\mathbf F^ww^{-1}}$ on $w\mathbf F^w(\mathbb R) w^{-1}$ and the Haar measure $\mu_{w\mathbf F_ww^{-1}}$ on $w\mathbf F_w(\mathbb R)w^{-1}$. 

Note that in the representation $\rho_{\beta_0}:\mathbf G\to\GL(\bigwedge^{\dim V_{\beta_0}}V)$ (the $\dim V_{\beta_0}$-exterior product of $\rho$), the linear space spanned by $$e_{V_{\beta_0}}:=e_1\wedge e_2\wedge\cdots\wedge e_{\dim V_{\beta_0}}\in\bigwedge^{\dim V_{\beta_0}}V$$ is the unique weight space of highest weight $\dim V_{\beta_0}\cdot\beta_0$ relative to $\mathbf T$. Let $W_{\beta_0}\subset\bigwedge^{\dim V_{\beta_0}}V$ be the irreducible sub-representation of $\mathbf G$ in $\rho_{\beta_0}$ such that $W_{\beta_0}$ contains $\mathbb C\cdot e_{V_{\beta_0}}$. Then $\Psi_w$ and $a_w, b_w, c_w, A_w$ ($w\in{_{\mathbb Q}\overline{\mathcal W}}$), which are defined in \S\ref{ubd}, can be considered as morphisms and quantities defined in the sub-representation $W_{\beta_0}$.

\begin{lemma}\label{l71}
For any $w\in{_{\mathbb Q}\overline{\mathcal W}}$, we have $a_w\leq a_e$ and $A_w\leq A_e$.
\end{lemma}
\begin{proof}
Let $w\in{_{\mathbb Q}\overline{\mathcal W}}$. The inequality $A_w\leq A_e$ follows from the fact that $w\mathbf F_ww^{-1}\subseteq R_u(\overline{\mathbf P}_{\beta_0})=\mathbf F_e$. 

Now let $B_{w\mathbf F^ww^{-1}}(1)$ be the open ball of radius one around the identity in $w\mathbf F^w(\mathbb R)w^{-1}$. For any $R>0$, consider the subset $$B_{w\mathbf F^w w^{-1}}(1)\cdot(w\cdot\Psi_w^{-1}(B_R)\cdot w^{-1})$$ in $R_u(\overline{\mathbf P}_{\beta_0})(\mathbb R)=\mathbf F_e(\mathbb R)$. We know that there exists a constant $C>0$ such that for any $f\in B_{w\mathbf F^w w^{-1}}(1)$ $$\|\rho_{\beta_0}(f)\cdot v\|\leq C\|v\|\quad\left(\forall v\in\bigwedge^{\dim\beta_0}V\right).$$ By the definitions of the morphisms $\Psi_w$ and $\Psi_e$, it implies that for any $R>0$ $$B_{w\mathbf F^w w^{-1}}(1)\cdot(w\cdot\Psi_w^{-1}(B_R)\cdot w^{-1})\subset\Psi_e^{-1}(B_{C\cdot R})$$ and we have
\begin{align*}
&\mu_{w\mathbf F^ww^{-1}}(B_{w\mathbf F^w w^{-1}}(1))\cdot\mu_{\mathbf F_w}(\Psi_w^{-1}(B_R))\\
=&\mu_{w\mathbf F^ww^{-1}}(B_{w\mathbf F^w w^{-1}}(1))\cdot\mu_{w\mathbf F_ww^{-1}}(w\cdot\Psi_w^{-1}(B_R)\cdot w^{-1})\\
\leq&\mu_{\mathbf F_e}(\Psi_e^{-1}(B_{C\cdot R})).
\end{align*}
Note that for any $w\in{_{\mathbb Q}\overline{\mathcal W}}$ $$\mu_{\mathbf F_{w}}(\Psi_{w}^{-1}(B_R))\sim c_{w}R^{a_{w}}(\log R)^{b_{w}}\;(\textup{as }R\to\infty).$$ So $a_w\leq a_e$. This completes the proof of the lemma.
\end{proof}

By the irreducible representation $\rho_{\beta_0}:\mathbf G\to\GL(W_{\beta_0})$, the flag variety $\mathbf G/\mathbf P_{\beta_0}$ can be considered as a subvariety in $\mathbf P(W_{\beta_0})$ via $$g\mathbf P_{\beta_0}\mapsto \rho_{\beta_0}(g)(\mathbb C\cdot e_{V_{\beta_0}}).$$ For any rational point $x\in(\mathbf G/\mathbf P_{\beta_0})(\mathbb Q)\subset\mathbf P(W_{\beta_0})(\mathbb Q)$, define the height of $x$ by $$H_{\beta_0}(x):=\|v\|$$ where $v$ is the unique primitive integral vector (up to sign) in the rational line representing $x\in(\mathbf G/\mathbf P_{\beta_0})(\mathbb Q)$ in $W_{\beta_0}$. Then we may count the number of rational points in $(\mathbf G/\mathbf P_{\beta_0})(\mathbb Q)$ with the height function $\|\cdot\|_{\beta_0}$. By \cite[Theorem 4]{MG14}, there exist $a_0,b_0,c_0>0$ such that $$\left|\{x\in(\mathbf G/\mathbf P_{\beta_0})(\mathbb Q): H_{\beta_0}(x)\leq R\}\right|\sim c_{0}\cdot R^{a_{0}}(\log R)^{b_0}.$$ Moreover, we may compute that $a_0=1/(|\kappa|\cdot\dim V_{\beta_0})$. 

\begin{lemma}\label{l72}
For $e\in{_{\mathbb Q}\overline{\mathcal W}}$, we have $$a_e\leq1/(|\kappa|\cdot\dim V_{\beta_0}),\quad A_e\leq1/(|\kappa|\cdot\dim V_{\beta_0}).$$
\end{lemma}
\begin{proof}
For any $R>0$, let $B_R$ denote the ball of radius $R>0$ around the origin in $W_{\beta_0}$, and $\chi_{R}$ the characteristic function of $B_R$. Let $\Omega_e$ be a bounded fundamental domain of $\mathbf F_e(\mathbb R)/\mathbf F_e(\mathbb Z)$ in $\mathbf F_e(\mathbb R)$. Then $\Omega_e$ is relatively compact. This implies that there exists $C>0$ such that for any $g\in\Omega_e$ $$\|\rho_{\beta_0}(g)\cdot v\|\leq C\|v\|\quad(\forall v\in W_{\beta_0}).$$ So we have 
\begin{align*}
\mu_{\mathbf F_e}(\Psi_e^{-1}(B_R))=&\int_{\mathbf F_e(\mathbb R)}\chi_{R}(u\cdot e_{V_{\beta_0}})d\mu_{\mathbf F_e}(u)\\
=&\sum_{\gamma\in\mathbf F_e(\mathbb Z)}\int_{\Omega_e}\chi_{R}(u\cdot\gamma\cdot e_{V_{\beta_0}})d\mu_{\mathbf F_e}(u)\\
\leq&\sum_{\gamma\in\mathbf F_e(\mathbb Z)}\int_{\Omega_e}\chi_{C\cdot R}(\gamma\cdot e_{V_{\beta_0}})d\mu_{\mathbf F_e}(u)\\
=&\mu_{\mathbf F_e}(\Omega_e)\cdot\sum_{\gamma\in\mathbf F_e(\mathbb Z)}\chi_{C\cdot R}(\gamma\cdot e_{V_{\beta_0}})\\
=&\mu_{\mathbf F_e}(\Omega_e)\cdot|\{\gamma\in\mathbf F_e(\mathbb Z):\|\gamma\cdot e_{V_{\beta_0}}\|\leq C\cdot R\}|\\
\leq&\mu_{\mathbf F_e}(\Omega_e)\cdot\left|\{x\in(\mathbf G/\mathbf P_{\beta_0})(\mathbb Q): H_{\beta_0}(x)\leq C\cdot R\}\right|.
\end{align*}
Here we use the injectivity of the morphism $\Psi_e$. Note that for $e\in{_{\mathbb Q}\overline{\mathcal W}}$ we have $$\mu_{\mathbf F_{e}}(\Psi_{e}^{-1}(B_R))\sim c_{e}R^{a_{e}}(\log R)^{b_{e}}\;(\textup{as }R\to\infty).$$ It implies that $$a_e\leq1/(|\kappa|\cdot\dim V_{\beta_0}).$$ 

Now by the discussion in \S\ref{ubd}, there exists a constant $C_{\beta_0}>0$ depending on $\Omega_{\beta_0}$ such that $$|\{g\in\Omega_{\beta_0}\cap w\mathbf F_{w}w^{-1}(\mathbb Q): \height(g)\leq l\}|\leq\left|\{x\in(\mathbf G/\mathbf P_{\beta_0})(\mathbb Q): H_{\beta_0}(x)\leq C_{\beta_0}\cdot l\}\right|.$$
By taking the limit as $l\to\infty$, we obtain $$A_e\leq1/(|\kappa|\cdot\dim V_{\beta_0}).$$ This completes the proof of the lemma.
\end{proof}

\begin{proof}[Proof of Theorem~\ref{thm10}]
Let $\kappa<0$ such that $$\beta_0=\kappa\cdot\sum_{\alpha\in\Phi(R_u(\overline{\mathbf P}_{\beta_0}))}{\alpha}.$$ By Lemma~\ref{l72}, one can compute that 
\begin{align*}
&\dim\mathbf G-\sum_{\alpha\in\Phi(\mathbf F_{e})}\frac{\alpha(a_1)}{\nu_{0}(a_1)}+\frac{(\beta_0(a_{-1})-\tau(\psi))}{\nu_{0}(a_1)}\cdot\max\{a_{e},A_{e}\}\cdot\dim V_{\beta_0}\\
\leq&\dim\mathbf G-\frac{\tau(\psi)}{\beta_0(a_{-1})\nu_{0}(a_1)}\cdot\sum_{\alpha\in\Phi(R_u(\overline{\mathbf P}_{\beta_0}))}\alpha(a_1).
\end{align*}
Now to derive Theorem~\ref{thm10} from Theorems~\ref{mthm11} and~\ref{mthm12}, it suffices to prove that for any $w\in{_{\mathbb Q}\overline{\mathcal W}}$ and $\tau(\psi)\leq\beta_0(wa_{-1}w^{-1})$
\begin{align*}
&\dim\mathbf G-\sum_{\alpha\in\Phi(\mathbf F_{w})}\frac{\alpha(a_1)}{\nu_{0}(a_1)}+\frac{(\beta_0(wa_{-1}w^{-1})-\tau(\psi))}{\nu_{0}(a_1)}\cdot\max\{a_{w},A_{w}\}\cdot\dim V_{\beta_0}\\
\leq &\dim\mathbf G-\frac{\tau(\psi)}{\beta_0(a_{-1})\nu_{0}(a_1)}\cdot\sum_{\alpha\in\Phi(R_u(\overline{\mathbf P}_{\beta_0}))}\alpha(a_1).
\end{align*}
Note that
\begin{align*}
\beta_0=&\kappa\cdot\sum_{\alpha\in\Phi(R_u(\overline{\mathbf P}_{\beta_0}))}\alpha\\
=&\kappa\cdot\left(\sum_{\alpha\in\Phi(w\mathbf F^ww^{-1})}\alpha+\sum_{\alpha\in\Phi(w\mathbf F_ww^{-1})}\alpha\right)
\end{align*}
and $$\beta_0(wa_{-1}w^{-1})=\kappa\cdot\left(\sum_{\alpha\in\Phi(\mathbf F^w)}\alpha(a_{-1})+\sum_{\alpha\in\Phi(\mathbf F_w)}\alpha(a_{-1})\right).$$
So by Lemma~\ref{l71} and Lemma~\ref{l72}, we have
\begin{align*}
&-\sum_{\alpha\in\Phi(\mathbf F_{w})}\frac{\alpha(a_1)}{\nu_{0}(a_1)}+\frac{(\beta_0(wa_{-1}w^{-1})-\tau(\psi))}{\nu_{0}(a_1)}\cdot\max\{a_{w},A_{w}\}\cdot\dim V_{\beta_0}\\
\leq&-\sum_{\alpha\in\Phi(\mathbf F_{w})}\frac{\alpha(a_1)}{\nu_{0}(a_1)}+\frac{(\beta_0(wa_{-1}w^{-1})-\tau(\psi))}{\nu_{0}(a_1)}\cdot a_0\cdot\dim V_{\beta_0}\\
=&-\sum_{\alpha\in\Phi(\mathbf F_{w})}\frac{\alpha(a_1)}{\nu_{0}(a_1)}+\frac{\beta_0(wa_{-1}w^{-1})}{\nu_{0}(a_1)|\kappa|}-\frac{\tau(\psi)}{\nu_{0}(a_1)|\kappa|}\\
=&\sum_{\alpha\in\Phi(\mathbf F^{w})}\frac{\alpha(a_1)}{\nu_{0}(a_1)}-\frac{\tau(\psi)}{\nu_{0}(a_1)|\kappa|}\\
\leq &-\frac{\tau(\psi)}{\beta_0(a_{-1})\nu_{0}(a_1)}\cdot\sum_{\alpha\in\Phi(R_u(\overline{\mathbf P}_{\beta_0}))}\alpha(a_1).
\end{align*}
Here we use the fact that all $\mathbb Q$-roots in $\Phi(\mathbf F^w)$ are positive and $\alpha(a_1)\leq0$ for any $\alpha\in\Phi(\mathbf F^w)$. This completes the proof of the theorem.
\end{proof}

\begin{proof}[Proofs of Theorems~\ref{thm11} and \ref{thm12}]
Let $\mathbf T$ be the full diagonal group in $\mathbf G=\SL_n$. Without loss of generality, we may write $$\{a_t\}_{t\in\mathbb R}=\left\{\diag(e^{b_1 t}, e^{b_2 t},\cdots, e^{b_n t}):t\in\mathbb R\right\}$$ where $b_1\geq b_2\geq\cdots\geq b_n$ and $b_1+b_2+\cdots+b_n=0$. We write $$\mathbf T(\mathbb R)^0=\{\diag(e^{t_1},e^{t_2},\cdots, e^{t_n}):t_1+t_2+\cdots+t_n=0\}$$ where $\mathbf T(\mathbb R)^0$ is the connected component of identity in $\mathbf T(\mathbb R)$. Let $\mathbf P_0$ be the lower triangular subgroup in $\mathbf G$ and $R_u(\overline{\mathbf P}_0)$ is the upper triangular unipotent subgroup in $\mathbf G$.  Then all the $\mathbb Q$-roots in $\mathbf G$ with respect to $\mathbf T$ are $$\alpha_{i,j}(a)=t_i-t_j\quad(1\leq i\neq j\leq n)$$ where $a=\diag(e^{t_1},e^{t_2},\cdots, e^{t_n})$.

Let $\rho$ be the standard representation of $\mathbf G=\SL_n$. Let $\{e_1,e_2,...,e_n\}$ be the standard basis in $V$ where $$e_1=\left(\begin{array}{c} 1 \\ 0\\0\\ \vdots\\0\end{array}\right)\quad e_2=\left(\begin{array}{c} 0 \\ 1\\0\\ \vdots\\0\end{array}\right)\quad\cdots\cdots\quad e_n=\left(\begin{array}{c} 0 \\ 0\\ \vdots\\0\\1\end{array}\right).$$ We choose $V_{\lambda_0}=V_{\beta_0}=\mathbb C\cdot e_n$ to be the highest weight space and $$V=\mathbb C\cdot e_1\oplus\mathbb C\cdot e_2\oplus\cdots\oplus\mathbb C\cdot e_n$$ is the weight space decomposition of $V$. The stabilizer $\mathbf P_{\beta_0}$ of $V_{\beta_0}$ and $R_u(\overline{\mathbf P}_{\beta_0})$ are $$\mathbf P_{\beta_0}=\left(\begin{array}{cccc} * &*&\cdots& 0 \\ * & *&\cdots& 0\\ \vdots & \vdots &\cdots &\vdots\\ * & * & \cdots & * \end{array}\right)\textup{ and } R_u(\overline{\mathbf P}_{\beta_0})=\left(\begin{array}{cc} I_{n-1} &*\\ 0 & 1 \end{array}\right).$$ One may verify that $\beta_0=(-\frac1n)\cdot\sum_{\alpha\in\Phi(R_u(\overline{\mathbf P}_{\beta_0}))}\alpha$. Theorem~\ref{thm11} follows from Theorem~\ref{thm10}.

Now consider the adjoint representation of $\mathbf G=\SL_n$. Let $V_{\beta_0}$ be the highest weight space in $\mathfrak{sl}_n$ $$V_{\beta_0}=\left(\begin{array}{cccc} 0 & 0 & \cdots & 0\\ \vdots & \vdots & \vdots & \vdots\\ 0 & 0 & \cdots & 0\\ * & 0 & \cdots & 0\end{array}\right).$$ Note that here $\nu_0=-\beta_0$ is the highest root in $\mathfrak{sl}_n$ and $\beta_0(a_t)=(b_n-b_1)t<0$ $(t>0)$. The stabilizer $\mathbf P_{\beta_0}$ of $V_{\beta_0}$ and $R_u(\overline{\mathbf P}_{\beta_0})$ are $$\mathbf P_{\beta_0}=\left(\begin{array}{ccccc} * & 0 & \cdots & 0 & 0 \\  * & * & \cdots & * & 0 \\ \vdots & \vdots & \cdots & \vdots & \vdots \\ * & * & \cdots & * & 0 \\ * & * & \cdots & * & *  \end{array}\right)\textup{ and }R_u(\overline{\mathbf P}_{\beta_0})=\left(\begin{array}{ccccc} 1 & * & \cdots & * & * \\  0 & 1 & \cdots & 0 & * \\ \vdots & \vdots & \cdots & \vdots & \vdots \\ 0 & 0 & \cdots & 1 & * \\ 0 & 0 & \cdots & 0 & 1  \end{array}\right).$$ One may verify that $\beta_0=(-\frac1{n-1})\cdot\sum_{\alpha\in\Phi(R_u(\overline{\mathbf P}_{\beta_0}))}\alpha$. Theorem~\ref{thm12} then follows from Theorem~\ref{thm10}.
\end{proof}

Let us explain how to deduce \cite[Corollary 1.3]{FZ22} (or equivalently \cite[Theorem 1.2]{FZ22}) from Theorem~\ref{thm12}. In \cite{FZ22}, we consider a regular one-parameter diagonal subgroup $\{a_t\}_{t\in\mathbb R}$ acting on the homogeneous space $X_3=\SL_3(\mathbb R)/\SL_3(\mathbb Z)$. According to \cite[Definition 1.1]{FZ22}, a point $p=g\cdot\SL_3(\mathbb Z)\in X_3$ is Diophantine of type $\tau$ if and only if there exists a constant $C>0$ such that for any $t>0$ $$\eta(a_t\cdot p)\geq Ce^{-\tau t}$$ where $\eta$ is the injectivity radius function on $\SL_3(\mathbb R)/\SL_3(\mathbb Z)$. Now let $p=g\SL_3(\mathbb Z)\in X_3$ which is not Diophantine of type $\tau$. Then by definition, for any $\epsilon>0$, there exists $t_\epsilon>0$ such that $$\eta(a_{t_\epsilon}\cdot p)<\epsilon e^{-\tau t_\epsilon}.$$ By \cite[Corollary 11.18]{R72}, for sufficiently small $\epsilon>0$, there exists a unipotent element $u_\epsilon\in\SL_3(\mathbb Z)\setminus\{e\}$ such that $g\cdot u_\epsilon\cdot g^{-1}\in\operatorname{Stab}(p)=g\SL_3(\mathbb Z)g^{-1}$ (the stabilizer of $p$) and $$d_{\SL_3}(a_{t_\epsilon}g\cdot u_\epsilon\cdot g^{-1}a_{-t_\epsilon},e)<\epsilon e^{-\tau t_\epsilon}$$ where $d_{\SL_3}$ is the metric on $\SL_3(\mathbb R)$ induced by a norm $\|\cdot\|_{\mathfrak{sl}_3}$ on the Lie algebra $\mathfrak{sl}_3(\mathbb R)$ of $\SL_3(\mathbb R)$. Note that $u_\epsilon$ is unipotent $$\log u_\epsilon=(u_\epsilon-I_3)-\frac{(u_\epsilon-I_3)^2}{2}$$ and $2\log u_\epsilon\in\mathfrak{sl}_3(\mathbb Z)\setminus\{0\}$. Then one can deduce that there exists a constant $\tilde C>0$ depending only on $\SL_3$ such that $$\|\Ad_{\SL_3}(a_{t_\epsilon}g)(2\log u_\epsilon)\|_{\mathfrak{sl}_3}<\tilde C\epsilon e^{-\tau t_\epsilon}$$ where $\rho=\Ad_{\SL_3}$ is the adjoint representation of $\SL_3$. This implies that $g\in S_{\rho}(\psi)^c$ for $\psi(t)=e^{-\tau t}$ by Definition~\ref{def11}.

Conversely, let $g\in\SL_3(\mathbb R)$ such that $g\in S_{\rho}(\psi)^c$ where $\rho=\Ad_{\SL_3}$ and $\psi(t)=e^{-\tau t}$. Then by Definition~\ref{def11}, for any $\epsilon>0$, there exists $t_\epsilon>0$ such that $$\delta(\Ad_{\SL_3}(a_{t_\epsilon}\cdot g)\mathfrak{sl}_3(\mathbb Z))<\epsilon\cdot e^{-\tau t_\epsilon}.$$ By \cite[Proposition 3.3]{TW03}, for sufficiently small $\epsilon>0$, there exists a nilpotent element $n_\epsilon\in\mathfrak{sl}_3(\mathbb Z)\setminus\{0\}$ such that $$\|\Ad_{\SL_3}(a_{t_\epsilon}\cdot g) n_\epsilon\|_{\mathfrak{sl}_3}<\epsilon\cdot e^{-\tau t_\epsilon}$$ where $\|\cdot\|_{\mathfrak{sl}_3}$ is a norm on the Lie algebra $\mathfrak{sl}_3(\mathbb R)$ of $\SL_3(\mathbb R)$. Note that $$\exp(2n_\epsilon)=I_3+(2n_\epsilon)+(2n_\epsilon)^2/2\in\SL_3(\mathbb Z)\setminus\{e\}.$$ Then one can deduce that there exists a constant $\tilde C>0$ depending only on $\SL_3$ such that $$d_{\SL_3}(a_{t_\epsilon}g\cdot\exp(2n_\epsilon)g^{-1}a_{-t_\epsilon},e)<\tilde C\cdot\epsilon\cdot e^{-\tau t_\epsilon}.$$ By \cite[Definition 1.1]{FZ22}, this implies that $g\SL_3(\mathbb Z)$ is not Diophantine of type $\tau$. Consequently, Theorem~\ref{thm12} and Remark~\ref{r19} imply \cite[Corollary 1.3]{FZ22} when $\mathbf G=\SL_3$.

\begin{remark}\label{r73}
The arithmetic case in \cite[Theorem 1.1]{Z19} concerns diagonalizable flows on $\mathbf G(\mathbb R)/\mathbf G(\mathbb Z)$ where $\mathbf G$ is a simple $\mathbb Q$-group with $\mathbb Q$-rank and $\mathbb R$-rank both equal to one. Using the same argument as above, we can also derive this arithmetic case from Theorem~\ref{thm10} with the adjoint representation of $\mathbf G$.
\end{remark}

\section{Connections to Diophantine approximation}\label{Diophantine}
In this section, we prove Theorems~\ref{thm13} and~\ref{thm15}. 

\subsection{Connection to Diophantine approximation on flag varieties}
We first prove Theorem~\ref{thm13}. To study the Diophantine approximation on $\mathbf X(\mathbb R)$, or specifically, to study the Diophantine subset $E_{\mathbf X}(\psi)$, we need to study the $\{a_t\}_{t\in\mathbb R}$-action on the space $\mathcal X$ of lattices in $V(\mathbb R)$ (see \cite[Chapter 1, Section 2]{d21}). For any $x\in\mathbf X(\mathbb R)$, write $x=\mathbf P(\mathbb R)\cdot s_x$ for some $s_x\in\mathbf G(\mathbb R)$ and define $\Lambda_x=s_x\cdot\mathbb Z^d\in\mathcal X$. Note that for any $g\in\mathbf P(\mathbb R)$, the set $\{a_t\cdot g\cdot a_{-t}\}_{t\geq0}$ is bounded in $\mathbf G(\mathbb R)$. So for any $s_1$ and $s_2$ with $\mathbf P(\mathbb R)\cdot s_1=\mathbf P(\mathbb R)\cdot s_2$, the forward orbits $\{a_t\cdot s_1\mathbb Z^d\}_{t\geq0}$ and $\{a_t\cdot s_2\mathbb Z^d\}_{t\geq0}$ in $\mathcal X$ stay within bounded distance from each other for any $t>0$, and the behaviors of $\{a_t\cdot s_1\mathbb Z^d\}_{t\geq0}$ and $\{a_t\cdot s_2\mathbb Z^d\}_{t\geq0}$ are almost identical.

Let $\mathcal C_\chi:=\mathbf G(\mathbb R)\cdot V_\chi\setminus\{0\}$. Let $\pi_{\chi}:V\to V_\chi$ be the projection from $V$ to $V_\chi$ along all the other weight spaces in $V$. For any lattice $\Lambda\in\mathcal X$, define $$\delta_\chi(\Lambda)=\inf\left\{\|v\|:v\in\Lambda\cap\mathcal C_\chi,\|\pi_\chi(v)\|\geq\frac12\|v\|\right\}.$$ We will need the following important correspondence theorem.

\begin{proposition}[{\cite[Proposition 3.2.4]{d21}}]\label{p82}
Let $x\in\mathbf X(\mathbb R)$ and $s_x\in\mathbf G(\mathbb R)$ such that $x=\mathbf P(\mathbb R)\cdot s_x$. Let $\beta_\chi>0$ be the Diophantine exponent defined in Theorem~\ref{thm110}. Then there exists a constant $C>0$ such that the following holds. Let $\psi:\mathbb R_+\to\mathbb R_+$ be a decreasing function, and $\Psi$ the function defined by $$\Psi(u)=Cu^{-\beta_\chi}\psi(u).$$ If the inequality $d_{\textup{CC}}(x,v)\leq H_\chi(v)^{-\beta_\chi}\psi(H_\chi(v))$ admits infinitely many solutions $v\in\mathbf X(\mathbb Q)$, then there exists arbitrarily large $t>0$ such that $$\delta_\chi(a_ts_x\mathbb Z^d)\leq 2e^{-t/\beta_\chi}\Psi^{-1}(e^{-t}).$$ If we have that there exists $t>0$ arbitrarily large such that $$\delta_\chi(a_ts_x\mathbb Z^d)\leq e^{-t/\beta_\chi}\Psi^{-1}(e^{-t})$$ then the inequality $$d_{\textup{CC}}(x,v)\leq C^2 H_\chi(v)^{-\beta_\chi}\psi(H_\chi(v))$$ admits infinitely many solutions $v\in\mathbf X(\mathbb Q)$.
\end{proposition}

\begin{proof}[Proof of Theorem~\ref{thm13}]
Since $\psi$ is decreasing, we have $\psi(t)\leq\psi(1)$ $(t\geq1)$ which implies that $\gamma(\psi)\geq0$.

We first compute an upper bound for the Hausdorff dimension of $E_{\mathbf X}(\psi)^c$. Let $x\in E_{\mathbf X}(\psi)^c$. From Definition~\ref{def111}, one may deduce that there exist infinitely many $v\in\mathbf X(\mathbb Q)$ such that $$d_{\textup{CC}}(x,v)\leq (H_\chi(v))^{-\beta_\chi}\psi(H_\chi(v)).$$ By Proposition~\ref{p82}, this implies that there exists $t>0$ arbitrarily large such that $$\delta_\chi(a_ts_x\mathbb Z^d)\leq 2e^{-t/\beta_\chi}\Psi^{-1}(e^{-t}).$$ Note that $\delta\leq\delta_\chi$. So by Definition~\ref{def11}, for any $\epsilon>0$, $s_x\in S_\rho(2e^{-t(1-\epsilon)/\beta_\chi}\Psi^{-1}(e^{-t}))^c$. This implies that $$\dim\mathbf P+\dim_HE_{\mathbf X}(\psi)^c\leq\dim_HS_\rho(2e^{-t(1-\epsilon)/\beta_\chi}\Psi^{-1}(e^{-t}))^c.$$ By Theorem~\ref{thm10}, we have $$\dim_HE_{\mathbf X}(\psi)^c\leq\dim\mathbf X-\frac{\tau}{\beta_0(a_{-1})\nu_0(a_1)}\cdot\sum_{\alpha\in\Phi(R_u(\mathbf P))}\alpha(a_{-1})$$ where $\tau=\tau(2e^{-t(1-\epsilon)/\beta_\chi}\Psi^{-1}(e^{-t}))$. One may compute that $$\tau=\frac{1-\epsilon}{\beta_\chi}-\frac1{\gamma(\psi)+\beta_\chi}.$$ Let $\epsilon\to0$ and we obtain $$\dim_HE_{\mathbf X}(\psi)^c\leq\dim\mathbf X-\left(\frac1{\beta_\chi}-\frac1{\gamma(\psi)+\beta_\chi}\right)\frac{1}{\beta_0(a_{-1})\nu_0(a_1)}\cdot\sum_{\alpha\in\Phi(R_u(\mathbf P))}\alpha(a_{-1}).$$

Now we compute a lower bound for the Hausdorff dimension of $E_{\mathbf X}(\psi)^c$. Let $\overline{\mathbf P}_0$ be the opposite minimal parabolic $\mathbb Q$-subgroup of $\mathbf P_0$ with the same Levi factor $Z(\mathbf T)$, and denote by $R_u(\overline{\mathbf P}_0)$ the unipotent radical of $\overline{\mathbf P}_0$. In section~\ref{lbd1}, for any function $\phi:\mathbb R_+\to\mathbb R_+$ with $\tau(\phi)\in[0,\chi(a_{-1}))$ and any sufficiently small $\epsilon>0$, we construct a Cantor type subset $\mathbf A_\infty$ in $R_u(\overline{\mathbf P}_0)(\mathbb R)$ such that for any $p\in\mathbf A_\infty$ we have
\begin{align*}
\delta(\rho(a_{t_k}\cdot p)\mathbb Z^d)&\lesssim\phi(t_k)\cdot e^{-\frac\epsilon2 t_k}
\end{align*}
for some divergent sequence $\{t_k\}_{k\in\mathbb N}$. Note that in the construction $$a_{t_k}p\in B_{\mathbf F_e}(\tilde{\delta}_0)\cdot a_{t_k}q_k$$ and there exist $v_k\in\rho(a_{t_k}q_k)\mathbb Z^d\cap V_\chi$ $(V_\chi=V_{\beta_0})$ and $x_k\in B_{\mathbf F_e}(\tilde\delta_0)$ such that $$\rho(x_k)v_k\in\rho(a_{t_k}p)\mathbb Z^d$$ and $$\delta(\rho(a_{t_k}\cdot p)\mathbb Z^d)\leq\|\rho(x_k)v_k\|\asymp e^{\chi(a_{t_k})}\cdot l_k\leq\phi(t_k)\cdot e^{-\frac\epsilon2 t_k}.$$ One may verify that for small $\tilde\delta_0>0$ we have $$\|\pi_\chi(\rho(x_k)v_k)\|
\geq\frac12\|\rho(x_k)v_k\|.$$ This implies that for any $p\in\mathbf A_\infty$ $$\delta_\chi(\rho(a_{t_k}\cdot p)\mathbb Z^d)\lesssim\phi(t_k)\cdot e^{-\frac\epsilon2 t_k}.$$ 

Now we may choose $$\phi(t)=2e^{-t/\beta_\chi}\Psi_{\epsilon}^{-1}(e^{-t})$$ where $\Psi_{\epsilon}(u)=Cu^{-\beta_\chi}\psi(u)u^{-\epsilon}$. By Proposition~\ref{p82}, for $x=\mathbf P(\mathbb R)g$ with $g\in\mathbf A_\infty$ there exist infinitely many $v\in\mathbf X(\mathbb Q)$ such that $$d_{\textup{CC}}(x,v)\leq C^2 H_\chi(v)^{-\beta_\chi}\psi(H_\chi(v))(H_\chi(v))^{-\epsilon}$$ which implies that $x\in E_{\mathbf X}(\psi)^c$. So we have $$\dim_H E_{\mathbf X}(\psi)^c+\dim\mathbf P\geq\dim_H\mathbf A_\infty+\dim\mathbf P_0$$ and $$\dim_HE_{\mathbf X}(\psi)^c\geq\dim\mathbf X-\sum_{\alpha\in\Phi(\mathbf F_e)}\frac{\alpha(a_1)(\tau+\epsilon)}{\beta_0(a_{-1})\nu_0(a_1)}$$ where $\tau=\tau(\phi)=\tau(2e^{-t/\beta_\chi}\Psi_{\epsilon}^{-1}(e^{-t}))$. One may compute that $$\tau=\tau(\phi)=\frac{1}{\beta_\chi}-\frac1{\gamma(\psi)+\epsilon+\beta_\chi}$$ and when $\gamma(\psi)<\infty$, $\tau=\tau(\psi)<\chi(a_{-1})$. Let $\epsilon\to0$ and we obtain that $$\dim_HE_{\mathbf X}(\psi)^c\geq\dim\mathbf X-\left(\frac1{\beta_\chi}-\frac1{\gamma(\psi)+\beta_\chi}\right)\frac{1}{\beta_0(a_{-1})\nu_0(a_1)}\cdot\sum_{\alpha\in\Phi(R_u(\mathbf P))}\alpha(a_{-1}).$$ 

Note that if $\gamma(\psi)=\infty$, then $\tau(\phi)=\chi(a_{-1})$. In this case, $\phi(t)e^{\chi(a_{-1})t}=2\Psi_\epsilon^{-1}(e^{-t})$ is an unbounded function as $\psi$ is decreasing. So we can choose the Cantor-type subset $\mathbf A_\infty$ constructed in Proposition~\ref{p56} and apply the argument above again to conclude that $$\dim_HE_{\mathbf X}(\psi)^c\geq\dim\mathbf X-\sum_{\alpha\in\Phi(\mathbf F_e)}\frac{\alpha(a_1)\tau(\phi)}{\beta_0(a_{-1})\nu_0(a_1)}.$$ This completes the proof of Theorem~\ref{thm13}.
\end{proof}

\subsection{Connection to rational approximation to linear subspaces}
Now we prove Theorem~\ref{thm15}. Let $1\leq k\leq l<n$. We consider the representation $\rho_k:\SL_n\to\GL(V)$ $(V=\bigwedge^k\mathbb C^n)$ defined by the $k$-th exterior product of the standard representation $\rho: \SL_n\to\GL(\mathbb C^n)$ of $\SL_n$. Note that the action of $\SL_n(\mathbb R)$ on the Grassmann variety of $k$-dimensional subspaces in $\mathbb R^n$ is transitive. So the representation $\rho_k$ is irreducible. Let $\{e_1,e_2,\cdots,e_n\}$ be the standard basis in $\mathbb C^n$, where the $i$-th coordinate of $e_i$ equals one and the other coordinates of $e_i$ equals zero $(1\leq i\leq n)$. Then $$\{e_{i_1}\wedge e_{i_2}\wedge\cdots\wedge e_{i_k}:1\leq i_1<i_2<\cdots<i_k\leq n\}$$ forms an integral basis of $\bigwedge^k\mathbb C^n$. We choose $$V_\chi=\mathbb C\cdot e_\chi,\quad e_\chi=e_1\wedge e_2\wedge\cdots\wedge e_k$$ to be the weight space of highest weight $\chi$ in $\bigwedge^k\mathbb C^n$. Denote by $\mathbf P_\chi$ the stablizer of the weight space $\mathbb C\cdot e_\chi$ and by $\pi_\chi:\bigwedge^k\mathbb C^n\to V_\chi$ the projection onto $V_\chi$ along the direct sum of all the other weight spaces. We also choose $\mathbf P_0$ to be the upper triangular subgroup, $\mathbf T$ the full diagonal subgroup and $\overline{\mathbf P}_0$ the lower triangular subgroup in $\SL_n$. Note that $\mathbf P_0$ is a minimal parabolic $\mathbb Q$-subgroup contained in $\mathbf P_\chi$. We denote by $R_u(\mathbf P_\chi)$ (resp. $R_u(\overline{\mathbf P}_0)$) the unipotent radical of $\mathbf P_\chi$ (resp. $\overline{\mathbf P}_0$), and write $\Phi(R_u(\mathbf P_\chi))$ (resp. $\Phi(R_u(\overline{\mathbf P}_0))$) for the set of $\mathbb Q$-roots in $R_u(\mathbf P_\chi)$ (resp. $R_u(\overline{\mathbf P}_0)$) relative to $\mathbf T$. The symbol $\sum_{\alpha\in\Phi(R_u(\mathbf P_\chi))}$ stands for the sum over all $\mathbb Q$-roots $\alpha\in\Phi(R_u(\mathbf P_\chi))$. One may verify that for the irreducible representation $\rho_k$, we have $\chi=n\cdot\sum_{\alpha\in\Phi(R_u(\mathbf P_\chi))}\alpha$. So $\rho_k$ satisfies the assumption of Theorem~\ref{thm10}.

Let $x_0$ be the $l$-dimensional subspace $\mathbb R$-$\textup{span}\{e_1,e_2,\dots,e_l\}$ in $X_l(\mathbb R)$, and $P$ the stabilizer of $x_0$ in $\SL_n(\mathbb R)$. Then the Grassmann variety $X_l(\mathbb R)$ can be identified with $P\backslash\SL_n(\mathbb R)$ by $$P\backslash\SL_n(\mathbb R)\mapsto X_l(\mathbb R),\quad P\cdot g\mapsto g^{-1}\cdot x_0.$$ Let $$a_t=\diag(e^{-t/l},\cdots,e^{-t/l},e^{t/(n-l)},\cdots,e^{t/(n-l)})\subset\SL_n(\mathbb R).$$ The matrix $a_t$ has an eigenvalue $e^{-\frac kl t}$ in $\bigwedge^k\mathbb R^n$. Denote by $\pi^+:\bigwedge^k\mathbb R^n\to\bigwedge^k\mathbb R^n$ the projection to the eigenspace of $a_t$ associated to the eigenvalue $e^{-\frac kl t}$. We write $\nu_{0}$ for any $\mathbb Q$-root relative to $\mathbf T$ in $\Phi(R_u(\overline{\mathbf P}_0))$ satisfying $$\nu_{0}(a_{1})=\max\{\alpha(a_{1}):\alpha\in\Phi(R_u(\overline{\mathbf P}_0))\}.$$

To study the Diophantine subset $E_{l,k}(\psi)$ in $X_l(\mathbb R)$, we need to study the $\{a_t\}_{t\in\mathbb R}$-action on the space of unimodular lattices in $\bigwedge^k\mathbb R^n$. The following is a correspondence theorem which is crucial in the analysis of $E_{l,k}(\psi)$.

\begin{proposition}[{\cite[Proposition 1]{d25}}]\label{p83}
Let $x\in X_l(\mathbb R)$ and $s_x\in\SL_n(\mathbb R)$ such that $x=P\cdot s_x$.
\begin{enumerate}
\item Let $v\in X_k(\mathbb Q)$ be close to $x$, and $t>0$ such that $e^{-t(\frac1l+\frac1{n-l})}=d(v,x)$. Then the pure tensor $\mathbf v\in\bigwedge^k\mathbb Z^n$ associated to $v$ satisfies $$\|\pi^+(a_ts_x\mathbf v)\|\gtrsim\|a_ts_x\mathbf v\|\textup{ and }\|a_ts_x\mathbf v\|\lesssim e^{-t\frac kl}H(v)$$ where the implicit constants depend on the choice of $s_x$.
\item Let $t>0$ and $\mathbf v\in\bigwedge^k\mathbb Z^n$ a pure tensor such that $$\|\pi^+(a_ts_x\mathbf v)\|\geq c\|a_ts_x\mathbf v\|$$ for some fixed constant $c>0$. Then the rational subspace $v\in X_k(\mathbb Q)$ assocaited to $\mathbf v$ satisfies $$H(v)\lesssim e^{\frac kl t}\|a_ts_x\mathbf v\|\textup{ and }d(v,x)\lesssim e^{-t(\frac1l+\frac1{n-l})}$$ where the implicit constants depend on the choice of $s_x$ and $c$.
\end{enumerate}
\end{proposition}

\begin{lemma}\label{l81}
Let $g\in R_u(\overline{\mathbf P}_0)(\mathbb R)\subset\SL_n(\mathbb R)$ be a rational element in the sense of Definition~\ref{def21}. If $\mathbf v\in\bigwedge^k\mathbb Z^n\setminus\{0\}$ is a primitive integral vector in $\bigwedge^k\mathbb R^n$ such that $\rho_k(g) \mathbf v\in V_\chi$, then $\mathbf v$ is a pure tensor in $\bigwedge^k\mathbb Z^n$.
\end{lemma}
\begin{proof}
Let $g$ be a rational element in $R_u(\overline{\mathbf P}_0)(\mathbb R)$. Then by Corollary~\ref{c23}, we may write $g=h\cdot f$ for some $h\in\mathbf H_e(\mathbb R)$ and $f\in\mathbf F_e(\mathbb Q)$. Note that $h$ fixes every element in $V_\chi$. So we have $\rho_k(f)\mathbf v\in V_\chi(=\mathbb C\cdot e_1\wedge\cdots\wedge e_k)$. We know that $\{\rho(f^{-1})e_1,\rho(f^{-1})e_2,\cdots,\rho(f^{-1})e_k\}$ spans a linear $\mathbb Q$-subspace $W_f$ in $\mathbb R^d$, and we may take an integral basis $\{f_1,f_2,\cdots,f_k\}\subset\mathbb Z^n$ for $W_f$. Then $f_1\wedge\cdots\wedge f_k\in\bigwedge^k\mathbb Z^n$ is a primitive integral vector in $\bigwedge^k\mathbb R^n$, and $f_1\wedge\cdots\wedge f_k\in\rho_k(f^{-1}) V_\chi$. It implies that $\mathbf v=f_1\wedge\cdots\wedge f_k$. This completes the proof of the lemma.
\end{proof}

\begin{proof}[Proof of Theorem~\ref{thm15}]
Since $\psi$ is decreasing, we have $\psi(t)\leq\psi(1)$ $(t\geq1)$ which implies that $\gamma(\psi)\geq0$.

We first compute an upper bound for the Hausdorff dimension of $E_{l,k}(\psi)^c$. Let $x\in E_{l,k}(\psi)^c$ and write $x=P\cdot s_x$ $(s_x\in\SL_n(\mathbb R))$. From Definition~\ref{def114}, one may deduce that there exist infinitely many $v\in X_k(\mathbb Q)$ such that $$d(x,v)\leq H(v)^{-\frac n{k(n-l)}}\cdot\psi(H(v)).$$ By Proposition~\ref{p83}(1), for any such $v\in X_k(\mathbb Q)$, there exists $t>0$ such that $$e^{-t(\frac1l+\frac1{n-l})}=d(v,x)$$ and the pure tensor $\mathbf v\in\bigwedge^k\mathbb Z^n$ associated to $v$ satisfies $$\|\pi^+(a_ts_x\mathbf v)\|\gtrsim\|a_ts_x\mathbf v\|\textup{ and }\|a_ts_x\mathbf v\|\lesssim e^{-t\frac kl}H(v)$$ where the implicit constants depend on the choice of $s_x$. Let $\Psi(u)=u^{-\frac n{k(n-l)}}\cdot\psi(u)$. Then we have $$e^{-t(\frac1l+\frac1{n-l})}=d(v,x)\leq\Psi(H(v))\implies H(v)\leq\Psi^{-1}(e^{-t(\frac1l+\frac1{n-l})})$$ and $$\|a_ts_x\mathbf v\|\lesssim e^{-t\frac kl}H(v)\leq e^{-t\frac kl}\cdot\Psi^{-1}(e^{-t(\frac1l+\frac1{n-l})})$$ which implies that $$\delta\left(a_ts_x\bigwedge^k\mathbb Z^n\right)\lesssim e^{-t\frac kl}\cdot\Psi^{-1}(e^{-t(\frac1l+\frac1{n-l})}).$$ So by Definition~\ref{def11}, for any $\epsilon>0$, $s_x\in S_{\rho_k}(e^{-t(1-\epsilon)\frac kl}\cdot\Psi^{-1}(e^{-t(\frac1l+\frac1{n-l})}))^c$. This implies that $$\dim P+\dim_HE_{l,k}(\psi)^c\leq\dim_HS_{\rho_k}(e^{-t(1-\epsilon)\frac kl}\cdot\Psi^{-1}(e^{-t(\frac1l+\frac1{n-l})}))^c.$$ By Theorem~\ref{thm10}, we have $$\dim_HE_{l,k}(\psi)^c\leq\dim X_l-\frac{\tau}{\chi(a_{-1})\nu_0(a_1)}\cdot\sum_{\alpha\in\Phi(R_u(\mathbf P_\chi))}\alpha(a_{-1})$$ where $\tau=\tau(e^{-t(1-\epsilon)\frac kl}\cdot\Psi^{-1}(e^{-t(\frac1l+\frac1{n-l})}))$. One may compute that $$\tau=(1-\epsilon)\frac kl-\frac{1/l+1/(n-l)}{\gamma(\psi)+n/(k(n-l))}$$ and $\nu_0(a_1)=1/l+1/(n-l)$. Let $\epsilon\to0$ and we obtain $$\dim_HE_{l,k}(\psi)^c\leq(l-k)(n-l)+\frac n{n/(k(n-l))+\gamma(\psi)}.$$

Now we compute a lower bound for the Hausdorff dimension of $E_{l,k}(\psi)^c$. Let $\Lambda_0=\bigwedge^k\mathbb Z^n$. In section~\ref{lbd1}, for any function $\phi:\mathbb R\to\mathbb R$ with $\tau(\phi)\in[0,\chi(a_{-1}))$ and any sufficiently small $\epsilon>0$, we construct a Cantor type subset $\mathbf A_\infty$ in $R_u(\overline{\mathbf P}_0)(\mathbb R)$ such that for any $p\in\mathbf A_\infty$ we have
\begin{align*}
\delta(\rho_k(a_{t_i}\cdot p)\Lambda_0)&\lesssim\phi(t_i)\cdot e^{-\frac\epsilon2 t_i}
\end{align*}
for some divergent sequence $\{t_i\}_{i\in\mathbb N}$. Note that in the construction $$a_{t_i}p\in B_{\mathbf F_e}(\tilde{\delta}_0)\cdot a_{t_i}q_i$$ and there exist a primitive integral vector $v_i\in\Lambda_0\setminus\{0\}$ and $x_i\in B_{\mathbf F_e}(\tilde\delta_0)$ such that  $$\rho_k(a_{t_i}q_i)v_i\in\rho(a_{t_i}q_i)\Lambda_0\cap V_\chi\;(V_\chi=V_{\beta_0}),\quad\rho_k(x_i)\rho_k(a_{t_i}q_i)v_i\in\rho_k(a_{t_i}p)\Lambda_0$$ and $$\delta(\rho_k(a_{t_i}\cdot p)\Lambda_0)\leq\|\rho_k(x_i)\rho_k(a_{t_i}q_i)v_i\|\asymp e^{\chi(a_{t_i})}\cdot l_i\leq\phi(t_i)\cdot e^{-\frac\epsilon2 t_i}.$$ One may verify that for small $\tilde\delta_0>0$ $$\|\pi_\chi(\rho_k(x_i)\rho_k(a_{t_i}q_i)v_i)\|
\geq\frac12\|\rho_k(x_i)\rho_k(a_{t_i}q_i)v_i\|$$ and hence $$\|\pi^+(\rho_k(a_{t_i}p)v_i)\|
\geq\frac12\|\rho_k(a_{t_i}p)v_i\|.$$ Moreover, by Lemma~\ref{l81}, $v_i$ is a pure tensor in $\bigwedge^k\mathbb Z^n$. Denote by $y_i\in X_k(\mathbb Q)$ the $k$-dimensional $\mathbb Q$-subspaces associated to $v_i$.

Now we may choose $$\phi(t)=e^{-t\cdot \frac kl}\cdot\Psi_{\epsilon}^{-1}(e^{-t(\frac1l+\frac1{n-l})})$$ where $\Psi_{\epsilon}(u)=u^{-n/(k(n-l))}\psi(u)u^{-\epsilon}$. Then by Proposition~\ref{p83}(2), for $x=P\cdot g\in X_l(\mathbb R)$ with $g\in\mathbf A_\infty$, there exist infinitely many $y_i\in X_k(\mathbb Q)$ and sufficiently large $t_i\in\mathbb R_+$ such that $$H(y_i)\lesssim e^{\frac kl t_i}\|\rho_k(a_{t_i}g)v_i\|\lesssim e^{\frac kl t_i}\cdot\phi(t_i)e^{-\frac\epsilon2 t_i}\implies H(y_i)\leq\Psi_{\epsilon}^{-1}(e^{-t_i(\frac1l+\frac1{n-l})})$$ and $$d(y_i,x)\lesssim e^{-t_i(\frac1l+\frac1{n-l})}\leq\Psi_\epsilon(H(y_i))=H(y_i)^{-n/(k(n-l))}\psi(H(y_i))H(y_i)^{-\epsilon}$$ which implies that $x\in E_{l,k}(\psi)^c$. So we have $$\dim_H E_{l,k}(\psi)^c+\dim P\geq\dim_H\mathbf A_\infty+\dim\mathbf P_0$$ and $$\dim_HE_{l,k}(\psi)^c\geq\dim X_l-\sum_{\alpha\in\Phi(\mathbf F_e)}\frac{\alpha(a_1)(\tau+\epsilon)}{\chi(a_{-1})\nu_0(a_1)}$$ where $\tau=\tau(\phi)=\tau(e^{-t\cdot \frac kl}\cdot\Psi_{\epsilon}^{-1}(e^{-t(\frac1l+\frac1{n-l})}))$. One may compute that $$\tau=\frac kl-\frac{1/l+1/(n-l)}{\gamma(\psi)+n/(k(n-l))+\epsilon}$$ and $\nu_0(a_1)=1/l+1/(n-l)$ and when $\gamma(\psi)<\infty$, $\tau=\tau(\phi)<\chi(a_{-1})=k/l$. Let $\epsilon\to0$ and we obtain that $$\dim_HE_{l,k}(\psi)^c\geq(l-k)(n-l)+\frac n{n/(k(n-l))+\gamma(\psi)}.$$ 

Note that if $\gamma(\psi)=\infty$, then $\tau(\phi)=\chi(a_{-1})$. In this case, the following function $$\phi(t)e^{\chi(a_{-1})t}=\Psi_\epsilon^{-1}(e^{-t(\frac1l+\frac1{n-l})})$$ is unbounded as $\psi$ is decreasing. So we can choose the Cantor-type subset $\mathbf A_\infty$ constructed in Proposition~\ref{p56} and apply the argument above again to conclude that $$\dim_HE_{l,k}(\psi)^c\geq(l-k)(n-l)+\frac n{n/(k(n-l))+\gamma(\psi)}=(l-k)(n-l).$$ This completes the proof of Theorem~\ref{thm15}.
\end{proof}

\vspace{0.2in}
\noindent\textbf{Acknowledgements.} The author thanks Nicolas de Saxce, Reynold Fregoli and Barak Weiss for valuable discussions.


\end{document}